\newtheorem{lemma}{Lemma}[section]
\newtheorem{theorem}[lemma]{Theorem}
\newtheorem{prop}[lemma]{Proposition}
\newtheorem{cor}[lemma]{Corollary}
\newtheorem{claim*}{Claim}
\newtheorem{remark}[lemma]{Remark}
\newtheorem{thm}[lemma]{Theorem}
\newtheorem{defn}[lemma]{Definition}
\newtheorem{notation}[lemma]{Notation}
\newtheorem{assumption}[lemma]{Assumption}
\newcommand{\PP}{{\mathbb P}}
\newcommand{\C}{{\mathbb C}}
\newcommand{\F}{{\mathbb F}}
\newcommand{\Q}{{\mathbb Q}}
\newcommand{\R}{{\mathbb R}}
\newcommand{\Z}{{\mathbb Z}}
\newcommand{\Fbar}{{\overline{\F}}}
\newcommand{\pp}{{\mathfrak p}}
\newcommand{\ve}{{\varepsilon}}
\newcommand{\calF}{{\mathcal F}}
\newcommand{\calO}{{\mathcal O}}
\newcommand{\calP}{{\mathcal P}}
\newcommand{\calQ}{{\mathcal Q}}
\newcommand{\calS}{{\mathcal S}}
\newcommand{\calU}{{\mathcal U}}
\newcommand{\fraka}{{\mathfrak a}}
\newcommand{\Cbar}{{\mathbb C}}
\DeclareMathOperator{\rk}{rk}
\DeclareMathOperator{\ord}{ord}
\DeclareMathOperator{\Sym}{Sym}
\DeclareMathOperator{\Jac}{Jac}
\DeclareMathOperator{\Trop}{Trop}
\DeclareMathOperator{\trop}{trop}
\DeclareMathOperator{\vertt}{vert}
\DeclareMathOperator{\New}{New}
\DeclareMathOperator{\MV}{MV}
\DeclareMathOperator{\vol}{vol}
\DeclareMathOperator{\Sp}{Sp}
\DeclareMathOperator{\conv}{conv}
\DeclareMathOperator{\codim}{codim}
\DeclareMathOperator{\Alb}{Alb}
\DeclareMathOperator{\rank}{Rank}
\DeclareMathOperator{\Per}{Per}
\DeclareMathOperator{\red}{red}
\newcommand{\isom}{\cong}
\numberwithin{equation}{section}
\numberwithin{table}{section}
\title{Effective Chabauty for symmetric powers of curves}
\author{Jennifer Park}
\curraddr{Department of Mathematics, University of Michigan, Ann Arbor, MI 48109, USA}
\email{jmypark@umich.edu}
\date{\today}
\begin{document}
\begin{abstract}
Faltings' theorem states that curves of genus $g \geq 2$ have finitely many rational points. Using the ideas of Faltings, Mumford, Parshin and Raynaud, one obtains an upper bound on the number of rational points (see \cite{Szp85}, XI, \S 2), but this bound is too large to be used in any reasonable sense. In 1985, Coleman showed \cite{Col85} that Chabauty's method, which works when the Mordell-Weil rank of the Jacobian of the curve is smaller than $g$, can be used to give a good effective bound on the number of rational points of curves of genus $g > 1$.  We draw ideas from nonarchimedean geometry to show that we can also give an effective bound on the number of rational points outside of the special set of $\Sym^dX$, where $X$ is a curve of genus $g > d$, when the Mordell-Weil rank of the Jacobian of the curve is at most $g-d$. 
\end{abstract}

\maketitle

\section{Introduction}

Throughout the paper, we assume that $X$ is a \textbf{nice} (smooth, projective, and geometrically integral) curve of genus $g$ defined over $\Q$ that has a rational point $O \in X(\Q)$, and $d \geq 1$. We aim to generalize the following theorem of Coleman to $\Sym^dX$, the symmetric powers of curves:

\begin{theorem}[\cite{Col85}, Theorem 4]
\label{T: Coleman}
\label{T: Coleman1}
Let $g > 1$ and $p$ be a prime number. Then there is an effectively computable bound $N(g,p)$ such that for every nice curve $X$ defined over $\Q$ of good reduction at $p$, such that $X$ is of genus $g$ and $g > \rank (\Jac(X))(\Q)$, then
\[
\#X(\Q) \leq N(g,p).
\]
\end{theorem}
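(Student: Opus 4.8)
The plan is to carry out the Chabauty--Coleman method. Fix the Abel--Jacobi embedding $\iota\colon X\hookrightarrow J:=\Jac(X)$, $P\mapsto[P-O]$, set $r:=\rank(\Jac(X))(\Q)$, so that $r\le g-1$ by hypothesis, and let $\calX/\Z_p$ be the smooth proper model of $X$, with special fiber $X_{\F_p}$. I would begin by recalling the theory of $p$-adic (Coleman) integration for a curve of good reduction at $p$: it provides a pairing
\[
J(\Q_p)\times H^0(X_{\Q_p},\Omega^1)\longrightarrow\Q_p,\qquad(Q,\omega)\longmapsto\int_0^Q\omega,
\]
which is $\Q_p$-linear in $\omega$, a group homomorphism in $Q$ (it factors through the $p$-adic logarithm $J(\Q_p)\to\Lie(J_{\Q_p})\cong H^0(X_{\Q_p},\Omega^1)^{\vee}$), and compatible via $\iota$ with the locally analytic function $\lambda_\omega\colon X(\Q_p)\to\Q_p$, $P\mapsto\int_O^P\omega$; moreover, on each residue disk $\lambda_\omega$ is given by a power series in a local coordinate whose formal derivative is the local expansion of $\omega$.

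The second step is to produce a differential annihilating the rational points. Since $J(\Q)$ is finitely generated of rank $r$, its image under $Q\mapsto\bigl(\omega\mapsto\int_0^Q\omega\bigr)$ spans a $\Q_p$-subspace of $H^0(X_{\Q_p},\Omega^1)^{\vee}$ of dimension at most $r<g=\dim_{\Q_p}H^0(X_{\Q_p},\Omega^1)$; hence there is a nonzero $\omega_0\in H^0(X_{\Q_p},\Omega^1)$ with $\lambda_{\omega_0}(P)=\int_0^{[P-O]}\omega_0=0$ for every $P\in X(\Q)$. After rescaling by a power of $p$, I may assume $\omega_0$ extends to a global relative differential on $\calX$ with nonzero reduction $\bar\omega_0\in H^0(X_{\F_p},\Omega^1)$, and I record that $\divv(\bar\omega_0)$ is effective of degree $2g-2$.

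The third step is the local analysis on residue disks, and I expect it to be the main obstacle. Write $X(\Q_p)=\coprod_{\tilde P\in X(\F_p)}D_{\tilde P}$, each disk identified via a local coordinate $t$ (a uniformizer at a $\Z_p$-point reducing to $\tilde P$, which exists by smoothness) with the maximal ideal $p\Z_p$; on $D_{\tilde P}$ write $\omega_0=w(t)\,dt$ with $w=\sum_{i\ge0}a_it^i\in\Z_p[[t]]$, and set $n_{\tilde P}:=\ord_{t=0}\bar w$. Then $\lambda_{\omega_0}$ restricts on $D_{\tilde P}$ to the antiderivative $\Lambda(t)=\sum_{i\ge0}\frac{a_i}{i+1}t^{i+1}$, which converges on $p\Z_p$ and is not identically zero because $\omega_0\ne0$; every point of $X(\Q)\cap D_{\tilde P}$ is a zero of $\Lambda$. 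By Strassmann's theorem — equivalently, by reading off the Newton polygon of $\Lambda(ps)$ — the number of such zeros is finite and at most $n_{\tilde P}+1+\epsilon(p)$, where the correction $\epsilon(p)$ is an effectively computable quantity bounding the powers of $p$ dividing the denominators $i+1$, and $\epsilon(p)=0$ once $p>2g$. The difficulty is precisely to control the $p$-adic valuations of the coefficients $a_i/(i+1)$ uniformly over all curves $X$, so that the per-disk bound depends only on $n_{\tilde P}$ and $p$; this is where good reduction and the integral normalization of $\omega_0$ enter, and where small primes force $\epsilon(p)$ to be positive.

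Finally I would sum over residue disks, use $\sum_{\tilde P\in X(\F_p)}n_{\tilde P}\le\deg\divv(\bar\omega_0)=2g-2$, and invoke the Weil bound $\#X(\F_p)\le p+1+2g\sqrt{p}$:
\[
\#X(\Q)\le\sum_{\tilde P\in X(\F_p)}\bigl(n_{\tilde P}+1+\epsilon(p)\bigr)\le(2g-2)+(1+\epsilon(p))(p+1+2g\sqrt{p})=:N(g,p),
\]
which is effectively computable; for $p>2g$ this specializes to the sharp bound $\#X(\Q)\le\#X(\F_p)+2g-2$.
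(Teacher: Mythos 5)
Your sketch reproduces Coleman's original argument from \cite{Col85} essentially line for line: produce an annihilating differential from the rank hypothesis, normalize it integrally so that $\bar\omega_0\neq 0$, bound zeros of the Coleman antiderivative on each residue disk via the Newton polygon (with the $p>2g$ dichotomy controlling the denominators $i+1$), and then sum using $\deg\divv(\bar\omega_0)=2g-2$ together with the Weil bound. Since the paper cites this theorem rather than proving it, there is no divergence to report; your account is the intended proof.
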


Although this theorem is weaker than Faltings' theorem for curves, which states that \textit{any} curve of genus $g \geq 2$ has finitely many rational points, Coleman's bounds are effective and sometimes sharp, in which case Theorem \ref{T: Coleman} can be used to find all rational points of a given curve. Previously, all known bounds were too large to be practical.

Coleman divides the set $X(\Q_p)$ into finitely many sets called \textit{residue disks}; the set of $\Q_p$-points on each residue disk is in bijection with $p\Z_p$. On each residue disk, a necessary condition for the $\Q_p$-points of $X$ (considered as an element of $p\Z_p$) to be in $X(\Q)$ is given as a power series equation; each $\Q$-point is a solution to the power series equation. The number of such solutions can be estimated by using Newton polygons.

More generally, consider $\Sym^dX$. While it seems plausible that one could generalize Chabauty's method to $\Sym^dX$, several problems exist (see \S \ref{S: Chabauty} for more explanation). One major such problem is the fact that $(\Sym^dX)(\Q)$ is not necessarily finite. However, in such cases, all but finitely many rational points of $\Sym^dX$ are contained in the \textit{special set}:

\begin{defn}[\cite{Lan91}]
\label{D: SpecialSet}
Let $X/\Q$ be a projective variety considered as a variety defined over $\overline{\Q}$. The \textbf{special set} of $X$ is the Zariski closure of the union of all images of nonconstant rational maps $f: G \to X$ of group varieties $G$ into $X$; these rational maps may be defined over finitely generated extensions of $\Q$. We denote the special set of $X$ by $\calS(X)$ (despite the name, this is a geometric object).
\end{defn}

As in \cite{Col85}, we will find locally analytic functions (written as power series on residue disks) whose solutions contain $(\Sym^dX)(\Q)$, defined by $p$-adic integrals. These power series cut out a rigid analytic subvariety of $(\Sym^dX)^{\textup{an}}$, denoted $(\Sym^dX)^{\eta=0}$ (Here, $(\Sym^dX)^{\textup{an}}$ denotes the analytification of $\Sym^dX$, in the sense of rigid analytic geometry).
Then $(\Sym^dX - \calS(\Sym^dX))(\Q)$ is contained in the set 
\[
\{P \in (\Sym^dX)^{\eta=0} : P \textup{ is the point in a $0$-dimensional component in }(\Sym^dX)^{\eta=0}\},
\]
under the following assumption (which always holds if $\rank J \leq 1$):

\begin{assumption}
\label{A: assumption}
Every positive-dimensional rigid analytic component of $(\Sym^dX)^{\eta=0}$ is contained in $\calS(\Sym^dX)^{\textup{an}}$. 
\end{assumption}

This assumption always holds if $\rk J \leq 1$, since we can always choose the locally analytic functions so that $(\Sym^dX)^{\eta = 0} \subseteq \overline{J(\Q)}^{p\textup{-adic}}$, where $\overline{J(\Q)}^{p\textup{-adic}}$ is at most $1$-dimensional $p$-adic Lie group.

The main result of this paper is the following; under Chabauty-type assumptions, as well as the above assumption, one can get an effective upper bound on the number of rational points of $\Sym^dX$ outside of the special set:

\begin{theorem}
\label{T: main}
Let $d \geq 1$, $p$ a prime, and $g \geq 2$. Then there exists a number $N(p,d,g)$ that can be computed effectively, such that for every nice curve $X$ defined over $\Q$ of good reduction at $p$ with $\rank J \leq g-d$ satisfying Assumption \ref{A: assumption}, 
\[
\#\{Q \in (\Sym^dX)(\Q) \mid \textup{$Q$ does not belong to the special set}\} \leq N(p,d,g).
\]
\end{theorem}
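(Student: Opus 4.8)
The plan is to mimic Coleman's strategy: find enough $p$-adic analytic functions vanishing on $(\Sym^dX)(\Q)$, decompose $(\Sym^dX)(\Q_p)$ into residue ``polydisks,'' and bound the number of common zeros of these functions on each polydisk by a Newton-polygon argument, then multiply by the number of polydisks. I would first set up the $p$-adic integration theory on $\Sym^dX$. Since $X$ has good reduction at $p$, choose a smooth proper model $\mathcal{X}/\Z_p$; then $\Sym^d\mathcal{X}$ is a smooth proper model of $\Sym^dX$ (symmetric powers preserve smoothness over a base since we are in the relative setting and $d!$ is a unit once $p > d$ — I would either assume $p > d$ or work around the ramification). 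The reduction map $(\Sym^dX)(\Q_p) \to (\Sym^d\mathcal{X})(\F_p)$ has fibers that are the residue disks; each fiber is analytically isomorphic to $(p\Z_p)^d$ via a choice of local coordinates, and the number of residue disks is $\#(\Sym^d\mathcal{X})(\F_p) = \#\Sym^d(\mathcal X(\F_{p}))$, which depends only on $p,d,g$ (via the Weil bounds on $\#X(\F_p)$ and $\#X(\F_{p^k})$ for $k \le d$, since $\Sym^d$ of a finite set of points over $\Fbar_p$ stable under Frobenius is counted by symmetric functions in the Frobenius-orbit data). So the ``number of polydisks'' factor is effectively bounded in terms of $p,d,g$ alone.

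Next I would produce the analytic functions. The Chabauty condition $\rank J \le g - d$ means the closure $\overline{J(\Q)}^{p\text{-adic}}$ inside $J(\Q_p)$ has dimension at most $g-d$, so the space of global invariant differentials on $J$ killing $\log\overline{J(\Q)}$ has dimension at least $g-(g-d) = d$; pull these back via the Abel–Jacobi map $\iota \colon \Sym^dX \to J$ (using the base point $O$) to get at least $d$ differentials $\omega_1,\dots,\omega_d$ on $\Sym^dX$, and let $\eta_i(Q) = \int_O^Q \omega_i$ be the corresponding Coleman integrals. These vanish on $(\Sym^dX)(\Q)$ (more precisely on all points mapping into $\overline{J(\Q)}$), and their common zero locus is the rigid analytic subvariety $(\Sym^dX)^{\eta = 0}$ of the excerpt. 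On each residue polydisk, writing $\eta_i$ as a power series in the $d$ local coordinates $t_1,\dots,t_d$ (with $\Z_p$-integral coefficients after scaling, by the good-reduction integrality of Coleman integrals), the points of $(\Sym^dX)(\Q)$ in that polydisk lie in $V(\eta_1,\dots,\eta_d)$. Under Assumption~\ref{A: assumption}, the rational points outside the special set lie in the $0$-dimensional part of this zero locus, i.e. the isolated common zeros of $\eta_1,\dots,\eta_d$.

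The heart of the matter, and the main obstacle, is to bound the number of isolated common zeros of $d$ power series in $d$ variables over $\Z_p$ — a multivariable analogue of the Newton-polygon count Coleman uses in dimension one. I expect to do this via a $p$-adic analogue of the Bézout/Bernstein–Kushnirenko theorem: reduce the $\eta_i$ modulo $p$ (or use their Newton polytopes / leading terms) to get a system of algebraic equations over $\F_p$ — or rather over $\F_p[[t]]$ — controlled by the valuations and low-order terms, and bound the number of solutions in $(p\Z_p)^d$ by a mixed-volume / resultant estimate, tracking multiplicities carefully since isolated zeros may be nonreduced. A clean way to organize this: intersect iteratively, first cutting by $\eta_1$ to get a codimension-one rigid subvariety (whose ``components'' we can bound), then restricting the remaining $\eta_i$ and inducting on dimension, at each stage using the one-variable Newton-polygon bound on the slices. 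The effectivity comes from bounding the valuations of the coefficients of the $\eta_i$ (hence the relevant Newton data) purely in terms of $p$, $d$, $g$ and the good-reduction hypothesis — this is where the $p$-adic regularity of Coleman integrals on a smooth model, together with control of the degree of $\iota(\Sym^dX)$ in $J$ in terms of $g$ and $d$, is essential. Assembling: $N(p,d,g) = (\text{number of residue polydisks}) \times (\text{per-polydisk bound on isolated zeros})$, both effectively bounded in $p,d,g$, proves the theorem.
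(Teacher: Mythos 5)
Your overall strategy (residue-disk decomposition, Coleman integrals vanishing on $\overline{J(\Q)}$, Newton-polygon bounds, assemble via a count of residue disks) matches the paper, and the peripheral pieces are right: the count of residue disks via Weil bounds is exactly Proposition~\ref{P: residuedisks}, and your caveat about $p > d$ is actually unnecessary --- $\Sym^d$ of a smooth relative curve is smooth over the base in all characteristics, with no condition on $d!$, so the paper needs no such hypothesis. But there are two genuine gaps at precisely the places you flag as ``the heart of the matter.''

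First, you give no mechanism for actually controlling the Newton polytopes of the $\eta_i$. The iterative slicing you propose (cut by $\eta_1$, restrict, induct) is not developed and faces a real obstruction: the Newton polytope of a restriction of a power series to a rigid-analytic hypersurface is not something you can read off. The paper's key device is to pull each residue disk of $\Sym^dX$ back to $X^d$ over an unramified extension of $\Q_p$ of degree $\le d$; in the resulting coordinates the Coleman integral decouples as a \emph{pure} power series $F(t_1,\dots,t_d) = f_1(t_1) + \cdots + f_d(t_d)$ with no cross terms (Proposition~\ref{P: PureChabauty}). The Newton polytope of a pure power series is a simplex whose extent along the $i$-th axis is bounded by $k_i + \delta_\varepsilon(k_i,v,\ell)$, controlled via the one-variable Coleman argument applied to each $f_i$ separately (Lemmas~\ref{L: truncation} and \ref{L: BoundingNewtonPolygon}). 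Without this decoupling the multivariate Newton data is not effectively bounded, and your inductive slicing would have to recreate something equivalent.

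Second, and more fundamentally: Bernstein/mixed-volume bounds apply only when the system $\eta_1 = \cdots = \eta_d = 0$ has finitely many common zeros on the polydisk, and it typically does not --- the whole reason Assumption~\ref{A: assumption} exists is that $(\Sym^dX)^{\eta=0}$ can have positive-dimensional components (sitting inside the special set). You cannot apply a mixed-volume count to a non-proper intersection and then ``keep only the isolated zeros''; the theorem simply does not give a number in that situation. The paper's \S\ref{S: deformation} fills this gap with a continuity-of-roots / deformation argument: using Rabinoff's local continuity-of-roots theorem (Theorem~\ref{T: LocalCont}) and a nondegeneracy condition satisfied by pure power series, one perturbs the $\eta_i$ to a nearby system with \emph{finite} common zero locus, preserving tropicalizations and local Newton polytopes, so that the count of isolated zeros can only increase (Proposition~\ref{P: PStoPS}). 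Only after this deformation is it legitimate to invoke Bernstein's theorem. This step is entirely absent from your proposal, and it is not a formality --- without it, your per-polydisk bound is not defined.
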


If we impose extra conditions on the above theorem, we can even get a better bound on $N(p,d,g)$. For example:

\begin{cor}
We can take $N(2,3,3) = 1539$ for any degree $7$ odd hyperelliptic curve $X$ such that $\rank J(\Q) \leq 1$ and such that $X$ has good reduction at $2$. 
\end{cor}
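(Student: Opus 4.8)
The plan is to run the method behind Theorem~\ref{T: main} in the case $p = 2$, $d = g = 3$, replacing every generic estimate by a sharp one using the explicit geometry of an odd degree~$7$ hyperelliptic curve $X : y^2 = f(x)$ with $\deg f = 7$ (good reduction at $2$ being assumed). Such an $X$ has genus $3$ and a unique rational point at infinity, which we take as $O$, and the differentials $\omega_i = x^{i-1}\,dx/y$, $i = 1,2,3$, form a basis of $\HH^0(X,\Omega^1_X)$. Since $\rank J \le 1$, Assumption~\ref{A: assumption} holds automatically, so by the discussion preceding Theorem~\ref{T: main} it is enough to bound the number of points that lie on $0$-dimensional components of $(\Sym^3X)^{\eta=0}$, carried out one residue polydisk at a time.

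First I would bound the number of residue polydisks of $(\Sym^3X)(\Q_2)$. By good reduction at $2$ these correspond to the effective divisors of degree~$3$ on $X_{\F_2}$, and their number is an explicit function of the point counts $\#X(\F_{2^n})$ for $n = 1,2,3$: unordered triples of $\F_2$-points counted with multiplicity, an $\F_2$-point together with a conjugate pair of $\F_4$-points, and conjugate triples of $\F_8$-points. For an odd degree~$7$ hyperelliptic curve one always has $\#X(\F_{2^n}) \le 2\cdot 2^n + 1$ (at most two points above each value of $x$, plus the point at infinity), bounding the three relevant point counts by $5$, $9$, $17$, and hence bounding the number of polydisks.

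Next, on a residue polydisk attached to a divisor $D_0$, I would pick local coordinates $t = (t_1,t_2,t_3) \in (2\Z_2)^3$ adapted to $D_0$ and to the $\omega_i$, and expand the Chabauty functions. Choosing a maximal family of differentials annihilating $J(\Q)$ (of dimension at least $g - \rank J = 3 - \rank J \ge 2$), say $\omega,\omega'$, the functions $\eta_\omega(P_1+P_2+P_3) = \sum_{i} \int_O^{P_i}\omega$ and $\eta_{\omega'}$ become power series in $t_1,t_2,t_3$ whose coefficients have $2$-adic valuations controlled by the order of vanishing of $\omega,\omega'$ along the polydisk and by the denominators $1/n$ introduced by formal integration. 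Feeding this explicit system into the multivariable Newton-polygon estimate underlying Theorem~\ref{T: main} bounds the number of isolated solutions in $(2\Z_2)^3$; summing over polydisks and optimizing the constants in the resulting chain of inequalities is what yields the stated value $1539$.

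The hard part will be the small prime $p = 2$: it shrinks the radius of convergence of the $2$-adic integrals, weakens the bounds extractable from Newton polygons, and forces one to carry along the valuations of the denominators $1/n$ (equivalently, $v_2(n!)$) in the coefficients --- exactly the terms that make the per-polydisk count larger than in the range $p > 2g$. A secondary difficulty is that, in contrast with the one-variable Chabauty--Coleman setting, the Newton-polygon input here is genuinely in three variables and must be made fully explicit; one then has to check that, for $p = 2$ and $X$ hyperelliptic of this type, its product with the divisor count from the first step does not exceed $1539$.
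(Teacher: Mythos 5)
There is a genuine mismatch between your setup and what the statement actually requires. Despite the notation $N(2,3,3)$, the hypotheses ($g=3$, $\rank J(\Q)\leq 1$) and the paper's proof make clear that the computation lives on $\Sym^2X$, not $\Sym^3X$: the Chabauty machinery of Theorem \ref{T: bound} needs $d$ independent vanishing differentials, i.e.\ $\rank J \leq g-d$, so with $\rank J\leq 1$ and $g=3$ one can only take $d=2$. You instead run the method on $\Sym^3X$ with ``a maximal family of differentials annihilating $J(\Q)$ \dots say $\omega,\omega'$'', i.e.\ two power series in the three variables $t_1,t_2,t_3$. That is fatal for the method: the mixed-volume/Newton-polygon bound (Bernstein's theorem and Theorem \ref{T: estimation}) requires $d$ equations in $d$ variables, and the common zero locus of two analytic functions on a three-dimensional polydisk is generically positive-dimensional, so no finite per-disk count can be extracted. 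If you insist on $d=3$ you would need $\rank J=0$, contradicting the hypothesis you are given; the consistent reading, and the one the paper proves, is $p=2$, $d=2$, $g=3$.

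Even setting aside the dimension count, the proposal never produces the number $1539$: you bound $\#X(\F_{2^n})$ and describe in general terms how valuations of coefficients and denominators enter the Newton polygons, but the final step is ``optimizing the constants \dots yields the stated value,'' which is not an argument. The paper's $1539$ arises from two concrete computations on $\Sym^2X$: (i) a case analysis over $\F_2$ and $\F_4$ showing $\#(\Sym^2X)(\F_2)\leq 19$ (at most $\binom{5}{2}+5$ disks from pairs of $\F_2$-points plus at most $9-\#X(\F_2)$ from conjugate $\F_4$-pairs), and (ii) a per-residue-disk bound using $\deg\bar\omega = 2g-2 = 4$, the values $\delta_{\ve}(k,v,2)$ from Definition \ref{D: delta} (maximized at $k=3$, giving entries $k+\delta_\ve = 8$), and the permanent formula, yielding $\Per(A_\calP)' \leq \tfrac12\cdot 128 + 8 + 8 + 1 = 81$; the product $19\times 81 = 1539$ is the bound. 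Your sketch contains neither the $19$ nor the $81$, nor any mechanism that would reproduce them in the three-variable setting you chose, so as written it does not prove the corollary.
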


The problem of rational points on symmetric powers of curves have been studied in several papers. One result is that of Debarre and Klassen \cite{DebKla94}, which studies the \textup{Fermat curves} (projective plane curves given by
$X^N + Y^N = Z^N, N \geq 4$).
By Fermat's Last Theorem, we already know that these curves only have finitely many $K$-points for any number field $K$, and no nontrivial $\Q$-points, \cite{DebKla94} uses geometric methods to prove the following theorem:

\begin{thm}[\cite{DebKla94}]
For $N \neq 6$, there are only finitely many number fields $K$ with degree $d = [K:\Q] \leq N-2$ such that $F_N(K) \neq F_N(\Q)$.
\end{thm}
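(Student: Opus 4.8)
\textbf{Sketch (following \cite{DebKla94}).} The plan is to turn points of low degree on $F_N$ into rational points of a symmetric power, transport these into the Jacobian, apply Faltings' theorem on rational points of subvarieties of abelian varieties, and then settle a purely geometric statement about $F_N$ --- which is exactly where $N=6$ is exceptional. (Theorem~\ref{T: main} does not apply directly here: it requires a Mordell--Weil rank bound on $\Jac F_N$ that is not known for Fermat curves in general, whereas the argument below uses only Faltings' theorem on abelian varieties.) If $K$ is a number field with $2\le[K:\Q]\le N-2$ and $F_N(K)\neq F_N(\Q)$, then, after replacing $K$ by the field generated by a point of $F_N(K)\setminus F_N(\Q)$, we may assume $K=\Q(P)$ for a point $P\in F_N$ of degree $e:=[\Q(P):\Q]$ with $2\le e\le N-2$. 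Its Galois orbit $\sum_\sigma P^\sigma$ is an effective divisor of degree $e$ on $F_N$ defined over $\Q$, i.e.\ a $\Q$-point of $\Sym^e F_N$, and non-conjugate $P$ give distinct such points; so it suffices to prove $(\Sym^e F_N)(\Q)$ is finite for each $e$ with $2\le e\le N-2$.

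Fix $O\in F_N(\Q)$ and let $u_e\colon\Sym^e F_N\to\Jac F_N$, $D\mapsto[D-eO]$, with image $W_e$. The smooth plane curve $F_N$ has gonality $N-1$, so for $e\le N-2$ it carries no $g^1_e$; hence no two distinct effective divisors of degree $e$ are linearly equivalent, every fibre of $u_e$ is a single point, and $u_e$ is injective. It therefore suffices to show $W_e(\Q)$ is finite. By Faltings' theorem on subvarieties of abelian varieties, $W_e(\Q)\subseteq\bigcup_i(a_i+B_i)$ for finitely many abelian subvarieties $B_i\subseteq\Jac F_N$ with $a_i+B_i\subseteq W_e$; so $W_e(\Q)$ is finite unless $W_e$ contains a translate of a positive-dimensional abelian subvariety.

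Ruling this out for $N\neq6$ is the crux. By adjunction $\omega_{F_N}\cong\OO_{F_N}(N-3)$, so the canonical embedding $\phi_\omega\colon F_N\hookrightarrow\PP^{g-1}$ is cut out by plane curves of degree $N-3$; since $u_e$ is injective, $h^0(D)=1$ for every effective $D$ of degree $e$, and $T_{[D]}W_e=H^0(\omega_{F_N}-D)^\perp$, where $H^0(\omega_{F_N}-D)$ is the space of degree-$(N-3)$ plane curves through $D$. A translate $a+B\subseteq W_e$ with $\dim B=t\ge1$ thus forces every such curve through $D$ to lie in one fixed codimension-$t$ linear subsystem, simultaneously for all $D$ in the $t$-dimensional family $Z=u_e^{-1}(a+B)$ --- equivalently, a fixed $\PP^{t-1}$ meets $\langle\phi_\omega(D)\rangle$ for every $D\in Z$. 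Expressing this in the standard basis of differentials $\{x^ry^s\,dx/y^{N-1}:r,s\ge0,\ r+s\le N-3\}$ and using the $\Q(\zeta_N)$-action behind the isogeny decomposition $\Jac F_N\sim\prod_{(a,b)}J_{a,b}$, one shows that such a family $Z$ exists only when $N=6$, where the exceptional geometry comes from the degree-$4$ map $F_6\to F_3$, $(x:y:z)\mapsto(x^2:y^2:z^2)$, and its twists by automorphisms of $F_6$, which place translates of elliptic curves inside $W_4$. Hence for $N\neq6$ every $B_i$ is trivial, $W_e(\Q)$ is finite, and by injectivity of $u_e$ so is $(\Sym^e F_N)(\Q)$.

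Putting the steps together, for $N\neq6$ the set of points of $F_N$ of degree $\le N-2$ is finite, so only finitely many number fields $K$ of degree $\le N-2$ satisfy $F_N(K)\neq F_N(\Q)$. The main obstacle is the third step: the reduction to Faltings' theorem is formal, but excluding the exceptional translated abelian subvarieties requires the explicit CM structure and quotient geometry of $\Jac F_N$, and it is precisely there that $N=6$ has to be set aside.
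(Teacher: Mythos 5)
A preliminary remark: the paper does not prove this statement at all — it is quoted from \cite{DebKla94} purely as background — so there is no internal proof to compare against, and your sketch has to stand on its own. At the level of strategy it does reproduce the Debarre--Klassen line of attack: low-degree points give $\Q$-points of $\Sym^e F_N$, the gonality $N-1$ of a smooth plane curve makes $u_e$ injective for $e\le N-2$, Faltings' theorem on subvarieties of abelian varieties reduces everything to excluding translates of positive-dimensional abelian subvarieties inside $W_e$, and $N=6$ is exceptional because of the degree-$4$ map $F_6\to F_3$ onto an elliptic curve. Those formal steps are correct.

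There are, however, two genuine gaps. First, the crux — that for $N\neq 6$ and $e\le N-2$ the variety $W_e$ contains no translate of a positive-dimensional abelian subvariety — is asserted rather than proved: the tangent-space/CM paragraph culminates in ``one shows that such a family $Z$ exists only when $N=6$,'' which is precisely the content of the theorem; everything preceding it (Abel--Jacobi, gonality, Faltings) is standard bookkeeping, so as written the proposal defers the actual mathematical work, and you acknowledge as much. Second, the opening reduction does not yield the statement as phrased: finiteness of the set of points of degree between $2$ and $N-2$ (equivalently, of $(\Sym^e F_N)(\Q)$ for these $e$) does not bound the number of \emph{fields} $K$, because a single point $P$ with $2\le[\Q(P):\Q]\le (N-2)/2$ already produces infinitely many fields $K\supsetneq\Q(P)$ of degree $\le N-2$ with $F_N(K)\neq F_N(\Q)$; ``replacing $K$ by $\Q(P)$'' silently discards the quantification over $K$. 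Concretely, $(1:-\zeta_3:0)$ is a quadratic point on $F_9$, and every quartic or sextic field containing $\Q(\zeta_3)$ then satisfies $F_9(K)\neq F_9(\Q)$ — so passing from ``finitely many points of degree $\le N-2$'' (what your argument targets, and what \cite{DebKla94} actually establish) to the field-counting formulation quoted here requires an additional argument controlling points of degree $\le (N-2)/2$, which the proposal does not supply.
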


\cite{DebKla94} raises the question of applying Chabauty's method to symmetric powers of curves. Then \cite{Kla93} attempts to generalize \cite{Col85} to symmetric powers of curves:

\begin{thm}[\cite{Kla93}] Let $1 < d < \gamma$, and let $X$ be a nice curve of genus $g >2$ and gonality $\gamma$, satisfying $\rank J(\Q) \leq g-d$. Then there exists a canonical divisor $M$ on $(\Sym X^d)_{\Q_p}$ such that the complement $\Sym^dX \backslash M$ has only finitely many rational points (here, a canonical divisor is a divisor of a meromorphic $d$-form). Further,
\[
\#((\Sym^dX)(\Q) \backslash \red_p^{-1}(\bar M(\F_p))) \leq \#((\Sym^dX)(\F_p) \backslash \bar M(\F_p)),
\]
where $\red_p$ denotes the reduction modulo $p$ map, and $\bar M$ denotes the reduction of $M$ $\bmod \textup{ } p$.
\end{thm}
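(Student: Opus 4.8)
\medskip\noindent\emph{Proof proposal.}
The plan is to run the Chabauty--Coleman method directly on $\Sym^dX$, transporting along the Abel--Jacobi map the vanishing of $p$-adic abelian integrals from $J:=\Jac(X)$. First I would fix $\iota_d\colon\Sym^dX\to J$, $D\mapsto[D-dO]$; since $d<\gamma$ no effective divisor of degree $d$ moves in a positive-dimensional linear system, so $\iota_d$ is injective and identifies $\Sym^dX$ with $W_d\subseteq J$. Both $X$ and $\Sym^dX$ have good reduction at $p$, with smooth proper $\Z_p$-models $\calX$, $\Sym^d\calX$, and $\Lambda:=H^0(\calX,\Omega^1_{\calX/\Z_p})$ is a $\Z_p$-lattice in $H^0(X_{\Q_p},\Omega^1)\cong H^0(J_{\Q_p},\Omega^1)$. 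Put $r=\rank J(\Q)\le g-d$. Since $\overline{J(\Q)}\subseteq J(\Q_p)$ is a $p$-adic Lie group of dimension at most $r$, the subspace $V=\{\omega\in H^0(J_{\Q_p},\Omega^1):\int_0^P\omega=0\text{ for all }P\in J(\Q)\}$ has $\dim V\ge g-r\ge d$. As $V$ is cut out over $\Q_p$, the submodule $\Lambda\cap V$ is a direct summand of $\Lambda$ of rank $\ge d$, so I would choose $\omega_1,\dots,\omega_d\in\Lambda\cap V$ that extend to a $\Z_p$-basis of $\Lambda$; then the reductions $\bar\omega_i\in H^0(\calX_{\F_p},\Omega^1)$ are linearly independent. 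Let $\eta_i:=\iota_d^*\omega_i$, the regular closed $1$-form on $\Sym^dX$ obtained by descending $\sum_j\mathrm{pr}_j^*\omega_i$ from $X^d$ (it extends over $\Sym^d\calX$), and set $\eta:=\eta_1\wedge\cdots\wedge\eta_d\in H^0((\Sym^dX)_{\Q_p},\Omega^d)$. On a reduced divisor $P_1+\cdots+P_d$ the form $\eta$ is, in a local trivialisation, the ``Wronskian'' $\det(\omega_i(P_j))$; this is not identically zero because the $\omega_i$ are linearly independent, so $\eta\ne0$ and $M:=\divv(\eta)$ is an effective canonical divisor on $(\Sym^dX)_{\Q_p}$. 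The same computation over $\F_p$ gives $\bar\eta:=\bar\eta_1\wedge\cdots\wedge\bar\eta_d\ne0$, so $\bar\eta$ is the reduction of $\eta$ and $\bar M:=\divv(\bar\eta)$ reduces $M$; crucially, an $\F_p$-point lies off $\bar M$ if and only if $\bar\eta_1,\dots,\bar\eta_d$ are linearly independent in the cotangent space there.

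Next I would carry out the local analysis on residue disks. Fix $\bar Q\in(\Sym^dX)(\F_p)\setminus\bar M(\F_p)$ and let $\calD_{\bar Q}=\red_p^{-1}(\bar Q)\cong(p\Z_p)^d$ be its residue polydisk, with coordinates $t=(t_1,\dots,t_d)$ centred at a lift of $\bar Q$. Write $\eta_i=\sum_j g_{ij}(t)\,dt_j$ with $g_{ij}\in\Z_p[[t]]$ (possible since the $\eta_i$ are integral). Then $(g_{ij}(0))$ reduces mod $p$ to the matrix of $\bar\eta_1,\dots,\bar\eta_d$ in a cotangent basis at $\bar Q$, which is invertible precisely because $\bar Q\notin\bar M$; hence $(g_{ij}(0))\in\mathrm{GL}_d(\Z_p)$. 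Let $\lambda_i$ be the antiderivative with $d\lambda_i=\eta_i$, $\lambda_i(0)=0$ (a power series converging on $\calD_{\bar Q}$), so $\Phi:=(\lambda_1,\dots,\lambda_d)\colon\calD_{\bar Q}\to\Q_p^d$ has unimodular linear part; a Newton-polygon argument in the spirit of \cite{Col85}, applied to the $\lambda_i$, then shows $\Phi$ is injective on $\calD_{\bar Q}$ (for $p$ odd, and at $p=2$ via the refined form of that estimate). Finally, if $Q_0,Q_1\in(\Sym^dX)(\Q)$ both lie in $\calD_{\bar Q}$, then $\iota_d(Q_0),\iota_d(Q_1)\in J(\Q)$ reduce to the same point, so they lie in a single residue disk of $J$; a path joining them there pulls back along $\iota_d$ to a path in $\calD_{\bar Q}$, and since $\iota_d^*\omega_i=\eta_i$ while the abelian logarithm $\ell_i:=\int_0^\bullet\omega_i$ is a homomorphism vanishing on $J(\Q)$,
\[
\lambda_i(Q_1)-\lambda_i(Q_0)=\int_{Q_0}^{Q_1}\eta_i=\ell_i(\iota_d(Q_1))-\ell_i(\iota_d(Q_0))=0
\]
for every $i$, whence $\Phi(Q_1)=\Phi(Q_0)$ and $Q_1=Q_0$. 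Thus each residue disk over an $\F_p$-point not on $\bar M$ contains at most one rational point, and summing over all such disks gives
\[
\#\left((\Sym^dX)(\Q)\setminus\red_p^{-1}(\bar M(\F_p))\right)\le\#\left((\Sym^dX)(\F_p)\setminus\bar M(\F_p)\right),
\]
since every $\Q$-point outside $\red_p^{-1}(\bar M(\F_p))$ reduces to an $\F_p$-point off $\bar M$.

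For the finiteness of $(\Sym^dX\setminus M)(\Q)$: on each residue disk the rational points, if any, share a common value of every locally defined $\lambda_i$, so $(\Sym^dX\setminus M)(\Q)$ lies in a rigid-analytic subvariety $Z\subseteq(\Sym^dX)^{\textup{an}}$; away from $M$ the $\eta_i=d\lambda_i$ are linearly independent (by the definition of $M$), so $Z\cap(\Sym^dX\setminus M)^{\textup{an}}$ is $0$-dimensional, hence finite by quasi-compactness of the proper rigid space $(\Sym^dX)^{\textup{an}}$.

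I expect the main obstacle to be the reduction-theoretic bookkeeping linking the canonical divisor $M=\divv(\eta)$ to the degeneracy locus of the reduced forms $\bar\eta_i$ --- so that ``$\bar Q\notin\bar M$'' is exactly the condition making the linear part of $\Phi$ unimodular --- together with arranging the Chabauty forms $\omega_i$ to annihilate $J(\Q)$ and simultaneously be integral with linearly independent reductions. Once that is set up, what remains is the $d$-dimensional analogue of Coleman's ``at most one rational point per good residue disk'', whose only genuine delicacy is the familiar $p=2$ case of the Newton-polygon step.
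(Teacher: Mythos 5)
The theorem you are trying to prove is not actually proved in this paper: it is quoted from Klassen's 1993 thesis \cite{Kla93} purely as background, so there is no "paper's own proof" to compare against. That said, your reconstruction follows the route one would expect Klassen's argument to take: pull back a basis $\omega_1,\dots,\omega_d$ of the annihilator of $\overline{J(\Q)}$ to $\Sym^dX$ via the (injective, since $d<\gamma$) Abel--Jacobi map, wedge to obtain a regular $d$-form $\eta$ whose divisor is the canonical divisor $M$, and observe that off $\bar M$ the local Jacobian of the vector of $p$-adic logarithms is unimodular. Your identification of $M$ as the degeneracy (Wronskian) locus of the $\bar\eta_i$, and the Chabauty step $\lambda_i(Q_1)-\lambda_i(Q_0)=\ell_i(\iota_d Q_1)-\ell_i(\iota_d Q_0)=0$, are exactly the right mechanisms.

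There is, however, one place where you wave your hands in a way that I do not believe can be repaired as stated: the claim that $\Phi=(\lambda_1,\dots,\lambda_d)$ is injective on the residue polydisk $(p\Z_p)^d$ "at $p=2$ via the refined form of that estimate." The injectivity argument writes $\lambda_i(t)-\lambda_i(t')=\sum_j(t_j-t_j')\bigl[a_{ij,0}+\tfrac{a_{ij,1}}{2}(t_j+t_j')+\cdots\bigr]$ and needs the bracket to be a $\Z_p$-unit perturbation of $a_{ij,0}$. For $p\ge3$ each correction term has $v\ge1$, so unimodularity of $(a_{ij,0})$ forces $t=t'$. At $p=2$ the term $\tfrac{a_{ij,1}}{2}(t_j+t_j')$ can have valuation $0$ when $a_{ij,1}$ is a unit and $v_2(t_j+t_j')=1$, so the linear part no longer dominates and $\Phi$ need not be injective on the whole polydisk; this is the same small-prime obstruction that forces the hypothesis $p>2g$ in \cite{Col85} and the correction terms in Stoll's refinement for $d=1$. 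The conclusion "at most one rational point per residue disk off $\bar M$" therefore requires either a restriction on $p$ or a genuinely sharper estimate, neither of which you supply — and the issue matters here, since the paper's own Corollary invokes the method at $p=2$.

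Two smaller items to tighten, though these are fillable. First, you need the $\omega_i$ to be a subset of a $\Z_p$-basis of $\Lambda\cap V$ (not just arbitrary elements) to guarantee linearly independent reductions; this is fine because $\Lambda/(\Lambda\cap V)$ is torsion-free, hence $\Lambda\cap V$ is a direct summand — but say so. Second, your finiteness argument appeals to "quasi-compactness of the proper rigid space $(\Sym^dX)^{\textup{an}}$" to conclude that the $0$-dimensional locus $Z\cap(\Sym^dX\setminus M)^{\textup{an}}$ is finite; but the complement of $M$ is open and not quasi-compact. The right statement is that $Z$ itself is a closed analytic subspace of the quasi-compact $(\Sym^dX)^{\textup{an}}$, hence has finitely many irreducible components, and every positive-dimensional component of $Z$ is forced into $M$ by the pointwise independence of the $\eta_i$ (your cotangent-space argument), so $Z\setminus M$ is contained in the finitely many $0$-dimensional components.
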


Also, \cite{Sik09} refines \cite{Kla93} by removing the gonality from the hypothesis of the above statement, and also giving a sufficient criterion for when a residue disk contains a single rational point. Also, he developed a method that can be used to compute $(\Sym^2X)(\Q)$ for some curves (two explicit examples are worked out in \cite{Sik09}, \S6).

Further, the components of the special set contained in the symmetric powers of curves have been studied, e.g. in \cite{HarSil91}, for the case of $d=2$. As $d$ grows, the geometry becomes increasingly complicated, as in \cite{Abr91}.

\begin{theorem}[\cite{HarSil91}]
If $\Sym^2X$ contains an elliptic curve, then $X$ is either bielliptic or hyperelliptic.
\end{theorem}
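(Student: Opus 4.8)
The plan is to push the problem into the Jacobian $J := \Jac(X)$ and then pull back to $X \times X$. Fix a rational point $O$, let $u \colon X \hookrightarrow J$ be the Abel--Jacobi embedding $P \mapsto [P-O]$, and let $\phi \colon \Sym^2 X \to J$ be $\{P,Q\} \mapsto u(P)+u(Q)$. This is birational onto its image $W_2 := u(X)+u(X)$, and is an \emph{isomorphism} onto $W_2$ precisely when $X$ is not hyperelliptic; when $X$ is hyperelliptic, its only positive-dimensional fibre is the hyperelliptic pencil $g^1_2 \cong \PP^1$. If $g=2$ then $X$ is hyperelliptic and we are done, and if $X$ is hyperelliptic we are done, so assume henceforth that $g \geq 3$ and $X$ is non-hyperelliptic, so that $\phi$ identifies $\Sym^2 X$ with $W_2 \subseteq J$. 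Given the elliptic curve $E \subseteq \Sym^2 X$, its image $\phi(E) \cong E$ is then a smooth genus-$1$ curve in $J$; choosing an origin on it and translating $J$ so that this origin maps to $0$, the image becomes a one-dimensional abelian subvariety, so $\phi(E) = t + E_1$ for some $t \in J$ and some elliptic subvariety $E_1 \subseteq J$.

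Now pull $E$ back under the double cover $\pi \colon X \times X \to \Sym^2 X$ to obtain the symmetric correspondence $\Gamma := \pi^{-1}(E) \subseteq X \times X$, with its two projections $p_1, p_2 \colon \Gamma \to X$. No component of $\Gamma$ can have a constant projection: such a component would be a curve of geometric genus $\leq 1$ either dominating $X$ or mapping isomorphically onto $X$ inside a fibre of the other projection, both impossible as $g \geq 3$. Hence $\Gamma$ is irreducible, $p_1$ and $p_2$ are finite and surjective, and $\pi|_\Gamma \colon \Gamma \to E$ is a connected double cover, namely the quotient of $\Gamma$ by the swap involution. Write $(n,n)$ for the bidegree of $\Gamma$ (so $n = \deg p_1 = \deg p_2$). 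If $n = 1$, then $p_1 \colon \Gamma \to X$ is an isomorphism and $\sigma := p_2 \circ p_1^{-1}$ is an automorphism of $X$; symmetry of $\Gamma$ forces $\sigma^2 = \mathrm{id}$ and $\sigma \neq \mathrm{id}$, while $E$ is the quotient $X/\langle\sigma\rangle$ and has genus $1$, so $\sigma$ is a bielliptic involution and $X$ is bielliptic, as desired. Thus the theorem reduces to showing that the bidegree is necessarily $(1,1)$.

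Ruling out $n \geq 2$ is, I expect, the crux of the argument, since it cannot follow from formal properties of $\phi$ alone --- indeed ``$J$ has an elliptic isogeny factor'' is far weaker than biellipticity, and the real input is that $t + E_1$ sits inside the \emph{surface} $W_2$. I would try two complementary lines. First, Riemann--Hurwitz: a connected \emph{étale} double cover of an elliptic curve is again elliptic, hence admits no non-constant map to $X$, so $\pi|_\Gamma \colon \Gamma \to E$ is ramified; its ramification divisor $R$ on $\Gamma$ is nonzero and lies over $E \cap \Delta$, where $\Delta \subseteq \Sym^2 X$ is the diagonal, so $g(\Gamma) = 1 + \tfrac12 \deg R \geq 2$, while $p_1$ gives $g(\Gamma) \geq n(g-1)+1$. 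Since $p_1 \colon \Gamma \to X$ and $\pi|_\Gamma \colon \Gamma \to E$ admit no common intermediate cover (one would produce a non-constant map from a genus-$\leq 1$ curve onto $X$), the Castelnuovo--Severi inequality bounds $g(\Gamma)$ from above; one then wants to combine these estimates to force $n = 1$. Second, and more hands-on: put $q \colon J \to A := J/E_1$ and $v := q\circ u \colon X \to A$; surjectivity of $p_1, p_2$ shows that every $P \in X$ has a partner $Q$ with $v(P)+v(Q) = q(t)$, so the image $v(X) \subseteq A$ is stable under the involution $a \mapsto q(t)-a$, and one tries to show that $v$ is birational onto its image and that the induced involution of $X$ has quotient of genus $\leq 1$. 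Making either step work when $n \geq 2$ --- equivalently, controlling how an elliptic curve can sit on the surface $W_2$ --- is the main difficulty.
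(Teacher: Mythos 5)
The paper does not prove this statement; it is quoted from \cite{HarSil91} as background, so there is no in-paper proof to compare against. Judged on its own terms, your reduction is sound as far as it goes: the passage to $W_2 \subset J$, the observation that a positive-dimensional fibre of $\phi$ forces hyperellipticity, the translation of $\phi(E)$ to an elliptic subvariety $E_1 \subset J$, the pullback to a symmetric correspondence $\Gamma \subset X \times X$, and the argument that $\Gamma$ is irreducible with both projections finite and surjective are all correct, and the case $n = 1$ does yield a bielliptic involution exactly as you say. This framework is indeed the natural one and is close in spirit to what Harris and Silverman do.

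The gap is the one you flag yourself, and it is the entire content of the theorem: nothing in the proposal rules out $n \geq 2$, or alternatively extracts a degree-$2$ map $X \to (\text{genus} \leq 1)$ from a correspondence of higher bidegree. Neither of the two avenues you sketch closes it as stated. The Castelnuovo--Severi route gives, for the pair $p_1 \colon \Gamma \to X$ (degree $n$) and $\pi|_\Gamma \colon \Gamma \to E$ (degree $2$), the upper bound $g(\Gamma) \leq ng + 2 + (n-1)$, while Riemann--Hurwitz gives $g(\Gamma) \geq n(g-1) + 1$; these are compatible for every $n$, so no contradiction falls out. The second route is also incomplete: it is not clear that $v = q \circ u \colon X \to A = J/E_1$ is birational onto its image (its degree is governed by how $u(X)$ meets translates $u(P) + E_1$, not by $n$), and even granting birationality, the involution $a \mapsto q(t) - a$ on $v(X)$ has no a priori reason to produce a quotient of genus $\leq 1$. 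What is missing is a genuine geometric input tying the effectivity and symmetry of the correspondence $\Gamma$ (equivalently, the fact that the elliptic curve $E_1 + t$ sits inside the specific surface $W_2$) to a degree bound; without that, the argument proves only that $J$ acquires an elliptic quotient, which is strictly weaker than biellipticity of $X$. As written, the proposal is an honest and well-organized reduction to the hard step, but it does not constitute a proof.
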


Two main ideas are required to obtain an effective bound for the number of points outside of the special set of $\Sym^dX$. The first is the approximation of the shape of the \textit{generalized} Newton polygons of multivariate power series, which gives an upper bound on the number of zeros of the power series equations on residue disks. In general, approximating the shape of the Newton polygons of multivariate power series is hard, but in our case, we have:

\centerline{
\xymatrix{
&& X^d(\C_p) \ar@{>>}[d]\\
\calU \ar@{^{(}->}[r] & (\Sym^dX)(\Q_p)  \ar@{^{(}->}[r] \ar@{>>}[d] & (\Sym^dX)(\C_p)\ar[r]\ar@{>>}[d] & J  \\
& (\Sym^dX)(\F_p)\ar@{^{(}->}[r] & (\Sym^dX)(\overline{\F_p}) & \\
}
}
While Chabauty's method typically deals with residue disks $\calU \subseteq (\Sym^dX)(\Q_p)$, we will instead look at the preimage of the residue disk $\calU \subseteq (\Sym^dX)(\Q_p) \subseteq (\Sym^dX)(\C_p)$ in $X^d(\C_p)$, where $\calU$ decomposes into a product of $d$ one-dimensional residue disks $\calU_i \subseteq X(\C_p)$ above $P_i \in X(\overline{\F_p})$. Pullbacks of residue disks $\calU$ to $X^d(\C_p)$ has the effect of change of variables on the local coordinates of $\calU$ into the uniformizing parameters of $X^d(\C_p)$, and this writes the multivariate power series constraints as a sum of $d$ single-variable power series (one variable for each power series) over a degree $d$ extension of $\Q_p$. Then the Newton polygons are much easier to approximate.

However, if $\#(\Sym^dX)(\Q) = \infty,$ we wish to count only the rational points outside of the special set, while the $d$ power series equations in $d$ variables have infinitely many common zeros. Under Assumption~\ref{A: assumption}, all such rational points form a subset of the set of zero-dimensional components of $(\Sym^dX)^{\eta=0}$. And such components correspond to the stable intersections of the multivariate power series constraints, whereas the positive-dimensional components of $(\Sym^dX)^{\eta=0}$ do not. Thus, we use deformation theory techniques coming from rigid analytic geometry, to deform the power series away from one another to obtain finite intersection in this case. This finite intersection number corresponds to the upper bound on the points outside of the special set.

In \S\ref{S: Chabauty}, we provide an outline of Chabauty's method for symmetric powers of curves. However, there is an intrinsic difficulty to Chabauty's method that comes from the incongruity between the algebraic and analytic description of the rational points in $\Sym^dX$, necessitating Assumption~\ref{A: assumption}. This problem is explained in \S\ref{S: comparison}. Then \S\ref{S: padicgeometry} will define generalized Newton polygons and the approximation of the shape the Newton polygons, and combine this idea with \S\ref{S: Chabauty} to obtain an upper bound for the number of points outside of the special set, when $(\Sym^dX)^{\eta=0}$ consists of only zero-dimensional components. We deal with the general case in \S\ref{S: deformation}, where we explain the idea of small $p$-adic deformations. Finally, in \S\ref{S: application}, we obtain some consequences of having an effective bound for the number of rational points outside of the special set of $\Sym^dX$.

\section*{Acknowledgements}
I would like to thank my advisor, Bjorn Poonen, for introducing me to Chabauty's method, for many helpful conversations on this project, and for his feedback on the exposition of this article. This project became the topic of my PhD thesis at MIT. I also thank Joseph Rabinoff for patiently explaining many of his results to me; many results in Section \ref{S: padicgeometry} of this paper were built on his results. I also benefited from conversations with Matt Baker, Jennifer Balakrishnan, Eric Katz, Samir Siksek, Bernd Sturmfels, and David Zureick-Brown.


\section{Chabauty on $\Sym^dX$}
\label{S: Chabauty}

In this section, we consider the problem of counting rational points outside of the special set of $\Sym^dX$. Using Chabauty's method, we will reduce this problem to analyzing the common zeros of $d$ power series in $d$ variables, and the power series have specific forms.


\subsection{Classical Chabauty} 
The exposition in this subsection outlines the classical method of Chabauty that gives an upper bound on $\#X(\Q)$; what we have here is a summarized version of \cite{McCPoo10}.
Let $\iota$ be the $\Q$-embedding
\begin{align*}
\iota: X & \hookrightarrow J\\
P & \mapsto [P-O]
\end{align*}
where $J$ is the Jacobian of $X$, viewed as the group of linear equivalence classes of degree-zero divisors on $X$. Then $J$ is an abelian variety of dimension $g$ over $\Q$. By an abuse of notation, we denote $\iota(X)$ as $X$. The inclusion $X(\Q) \subseteq J(\Q)$ holds, since $O \in X(\Q)$. Since $X(\Q) \subseteq X(\Q_p)$, we have $X(\Q) \subseteq X(\Q_p) \cap \overline{J(\Q)}$, where $\overline{J(\Q)}$ denotes the $p$-adic closure of $J(\Q)$ inside $J(\Q_p)$. Chabauty's result is: 

\begin{thm}[\cite{Cha41}]
\label{T: Chabauty}
Keep the notation as above. Let $X$ be a curve of genus $g \geq 2$ over $\Q$. If $X$ has good reduction at a prime $p$ and if $\dim \overline{J(\Q)} < g$, then $X(\Q_p) \cap \overline{J(\Q)}$ is finite.
\end{thm}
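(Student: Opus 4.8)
The plan is to use the logarithm map on the $p$-adic points of the Jacobian to linearize the problem. First I would recall that since $J$ is an abelian variety of dimension $g$ over $\Q$ with good reduction at $p$, the group $J(\Q_p)$ is a compact $p$-adic Lie group of dimension $g$, and there is a formal logarithm $\log_J \colon J(\Q_p) \to \Lie(J)(\Q_p) \cong \Q_p^g$ which is a local isomorphism near the identity and a homomorphism on a finite-index subgroup; dually, the invariant differentials $\HH^0(J_{\Q_p}, \Omega^1) \cong \Q_p^g$ pair with $\Lie(J)$, so each invariant differential $\omega$ gives a locally analytic function $\lambda_\omega = \int \omega \colon J(\Q_p) \to \Q_p$ vanishing at $O$, with the key property that $\lambda_\omega$ restricted to any residue disk is given by a convergent power series in the local coordinate (obtained by formally integrating the power series expansion of $\omega$), and $\lambda_\omega$ kills torsion and is additive.

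Next I would bring in the rank hypothesis. Since $\dim \overline{J(\Q)} < g$, the $\Q_p$-subspace $V \subseteq \HH^0(J_{\Q_p}, \Omega^1)$ of differentials annihilating $\overline{J(\Q)}$ (equivalently, annihilating the tangent space to $\overline{J(\Q)}$ at the identity) has dimension $\geq g - \dim\overline{J(\Q)} \geq 1$. Pick $0 \neq \omega \in V$. Because $\lambda_\omega$ is a homomorphism on a finite-index subgroup, vanishes on that subgroup's intersection with $J(\Q)$ up to the torsion/index issue (here one passes to a finite-index subgroup or notes $\overline{J(\Q)}$ is in the kernel of $\lambda_\omega$ by continuity, as $\lambda_\omega$ vanishes on a dense subgroup $J(\Q) \cap (\text{nbhd})$ and is continuous), we get $\overline{J(\Q)} \subseteq \ker \lambda_\omega$. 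Now pull back to $X$: the differential $\iota^*\omega$ is a regular differential on $X_{\Q_p}$, and by good reduction it has at worst the behavior allowing a well-defined Coleman integral $f_\omega = \int \iota^*\omega$ on each residue disk of $X(\Q_p)$. Every point of $X(\Q) \subseteq X(\Q_p) \cap \overline{J(\Q)}$ satisfies $f_\omega = (\text{const})$ on its residue disk.

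The final step is the finiteness count. There are only finitely many residue disks in $X(\Q_p)$ (the fibers of reduction $X(\Q_p) \to X(\F_p)$, with $X(\F_p)$ finite by good reduction and the Weil bounds). On each residue disk, choosing a uniformizer $t$ with $t \equiv 0$ at the center, $f_\omega$ is a nonzero convergent power series in $t$ over $\Q_p$ (nonzero because $\iota^*\omega$ is a nonzero differential, so its power series expansion $\iota^*\omega = (\sum a_i t^i)\,dt$ is not identically zero, hence $f_\omega = \sum \frac{a_i}{i+1} t^{i+1} + c$ is nonconstant), and a nonzero convergent power series on $p\Z_p$ has only finitely many zeros — this is a standard Weierstrass-preparation / Newton-polygon fact. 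Hence each residue disk contains finitely many points of $X(\Q_p) \cap \overline{J(\Q)}$, and summing over the finitely many disks gives finiteness of $X(\Q_p) \cap \overline{J(\Q)}$. The main obstacle, and the point requiring genuine care rather than soft arguments, is the compatibility between the \emph{algebraic} group law (used to say $\lambda_\omega$ is additive and kills $J(\Q)$) and the \emph{analytic} integration: one must justify that the Coleman integral on $X$ agrees with the pullback of the abelian integral on $J$, that the index-of-finite-index-subgroup and torsion issues do not obstruct the inclusion $\overline{J(\Q)} \subseteq \ker\lambda_\omega$, and that good reduction is exactly what guarantees the differential and its integral behave well $p$-adically on each disk.
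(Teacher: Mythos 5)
Your argument is correct and follows the same route the paper itself outlines in \S 2.1: choose a differential $\omega_J$ annihilating $\log(\overline{J(\Q)})$ (possible since $\dim\overline{J(\Q)}<g$), pull it back to $X$ via the isomorphism $H^0(J_{\Q_p},\Omega^1)\cong H^0(X_{\Q_p},\Omega^1)$, and bound the zeros of the resulting nonzero convergent power series on each of the finitely many residue disks. The only slight imprecision is the phrase ``homomorphism on a finite-index subgroup'': the abelian logarithm (equivalently $Q\mapsto\int_O^Q\omega_J$) is a homomorphism on all of $J(\Q_p)$, so $\overline{J(\Q)}\subseteq\ker\lambda_\omega$ follows directly from vanishing on $J(\Q)$ and continuity, exactly as you then argue.
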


To compute $\#(X(\Q_p) \cap \overline{J(\Q)})$ explicitly under the hypothesis of Theorem \ref{T: Chabauty}, let $J_{\Q_p}$ and $X_{\Q_p}$ be the base changes of $J$ and $X$ to $\Q_p$. There is a bilinear pairing
\begin{align*}
J(\Q_p) \times H^0(J_{\Q_p}, \Omega^1) &\to \Q_p \\
(Q, \omega) &\mapsto \int_{O}^Q \omega
\end{align*}
which induces the \textbf{logarithm homomorphism}
\[
\log: J(\Q_p) \to H^0(J_{\Q_p}, \Omega^1)^* = T_OJ_{\Q_p},
\]
where $T_OJ_{\Q_p}$ denotes the tangent space to $J_{\Q_p}$ at the origin $O$. Since $\dim \log(\overline{J(\Q)}) <  g$, there exists a hyperplane $H \subseteq T_OJ_{\Q_p}$ containing $\log(\overline{J(\Q)})$. This hyperplane $H$ is defined by the vanishing of some $\omega_J \in H^0(J_{\Q_p}, \Omega^1) \isom T_OJ_{\Q_p}^*$, and the restriction of $\omega_J$ to $X_{\Q_p}$ can be uniquely identified with an $\omega_X \in H^0(X_{\Q_p}, \Omega^1)$ via:

\begin{prop}[\cite{Mil86}, Proposition~2.2]
\label{P: Differentials on J and X are the same}
The restriction map $H^0(J_{\Q_p}, \Omega^1) \to H^0(X_{\Q_p}, \Omega^1)$ induced by $X \hookrightarrow J$ is an isomorphism of $\Q_p$-vector spaces.
\end{prop}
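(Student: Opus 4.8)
The plan is to prove the statement over $\Q$ (equivalently over any field of characteristic zero) and then obtain the $\Q_p$-version by flat base change, since $H^0(X_{\Q_p}, \Omega^1)$ and $H^0(J_{\Q_p}, \Omega^1)$ are the $\Q_p$-base changes of the corresponding $\Q$-vector spaces. Over $\Q$ the strategy is a dimension count together with injectivity. On one side, $\dim_{\Q} H^0(X, \Omega^1) = g$ by the definition of the genus. On the other side, $H^0(J, \Omega^1)$ is the space of translation-invariant $1$-forms on the abelian variety $J$, canonically identified with the cotangent space $T_O^*J$, hence of dimension $\dim J = g$. So it suffices to show that the restriction map $\iota^* \colon H^0(J, \Omega^1) \to H^0(X, \Omega^1)$ is injective.

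For the injectivity, the key input is the additivity of invariant differentials: if $A$ is an abelian variety, $\omega \in H^0(A, \Omega^1)$, and $a, b \colon T \to A$ are morphisms from a variety $T$, then $(a+b)^*\omega = a^*\omega + b^*\omega$; this follows from translation-invariance of $\omega$ by pulling back along $T \xrightarrow{(a,b)} A \times A \xrightarrow{+} A$. I would record this and then apply it to the sum morphism $\sigma \colon X^g \to J$, $(P_1, \dots, P_g) \mapsto \sum_{i=1}^{g} [P_i - O] = \sum_{i=1}^{g} \iota(P_i)$: writing $\pi_i \colon X^g \to X$ for the projections, additivity gives $\sigma^*\omega = \sum_{i=1}^{g} \pi_i^* \iota^*\omega$ for every $\omega \in H^0(J, \Omega^1)$. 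Hence $\iota^*\omega = 0$ forces $\sigma^*\omega = 0$. But $\sigma$ is surjective, because every divisor class of degree $g$ on $X$ contains an effective divisor by Riemann--Roch; and $\sigma^*$ is injective on global $1$-forms (see below); so $\omega = 0$. Together with the equality of dimensions, this proves $\iota^*$ is an isomorphism.

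The heart of the argument is the additivity of invariant differentials; beyond that, the only point needing care is the injectivity of $\sigma^*$ on global $1$-forms, which holds because in characteristic zero the dominant morphism $\sigma$ is generically smooth: at a generic point $x$ of $X^g$ the differential $d\sigma$ is surjective, so $\sigma^*\omega = 0$ forces $\omega$ to vanish at $\sigma(x)$ for every such $x$, the set of these points is dense in $J$, and hence the section $\omega$ of the locally free sheaf $\Omega^1_J$ is zero. A more structural alternative would be to invoke the universal property of the Albanese variety: since $X$ carries the rational point $O$, the pair $(J, \iota)$ is the Albanese of $X$, and pullback of global $1$-forms along the Albanese morphism of a smooth projective variety is always an isomorphism --- but unwinding that statement essentially reproduces the computation above, so I would present the concrete version. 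I do not anticipate any obstacle specific to $\Q_p$.
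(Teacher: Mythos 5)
Your argument is correct, but there is nothing in the paper to compare it with at the level of detail: the paper does not prove this proposition, it simply quotes it from Milne's article (the cited Proposition~2.2), where the statement is in essence obtained by identifying $H^0(J,\Omega^1)$ with the cotangent space of $J$ at the origin, using the construction of $J$ as $\Pic^0_{X/\Q}$ to identify $T_OJ$ with $H^1(X,\calO_X)$, and invoking Serre duality. Your route is the classical direct one: equality of dimensions ($g$ on both sides) plus injectivity of $\iota^*$, the latter via the additivity of invariant differentials applied to the sum map $\sigma\colon X^g\to J$, together with generic smoothness of the dominant map $\sigma$ in characteristic zero and flat base change from $\Q$ to $\Q_p$. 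This is self-contained modulo two standard facts you use implicitly and could flag: every global $1$-form on an abelian variety is translation-invariant (needed so that the additivity lemma applies to an arbitrary $\omega\in H^0(J,\Omega^1)$, and proved by the same restriction-to-slices argument you sketch for the addition map on $J\times J$), and the surjectivity/dominance of $\sigma$ via Riemann--Roch should be checked over $\overline{\Q}_p$ (or $\overline{\Q}$), which is harmless since vanishing of $\omega$ may be tested after extension of scalars. What your approach buys is an elementary, citation-free proof using only Riemann--Roch and invariant forms; what the cited approach buys is functoriality and the precise cohomological identification $T_OJ\cong H^1(X,\calO_X)$, which is useful elsewhere but not needed in this paper.
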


Then, the map induced from the above bilinear pairing using the $\omega_J$ given by
\begin{align*}
J(\Q_p) &\to \Q_p \\
Q &\mapsto \int_{O}^Q \omega_J
\end{align*}
vanishes on $\overline{J(\Q)}$ by construction, so $\# (X(\Q_p) \cap \overline{J(\Q)})$ is bounded above by the number of zeros of the restriction
\begin{align*}
\eta: X(\Q_p) &\to \Q_p\\
Q &\mapsto \int_O^Q \omega_X,
\end{align*}
where $\omega_X = \iota^* \omega_J$.

From a computational perspective, it is known that $\omega$ has a well-defined power series expansion in terms of a uniformizer $t$ on small enough open subgroups $U$ of $X(\Q_p)$. In fact, $U$ can be taken to be \textbf{residue disks} of the reduction $\red_p: X(\Q_p) \twoheadrightarrow X(\F_p)$, which are the preimages of any point $Q \in X(\F_p)$. Such residue disk $U$ can be parametrized by a uniformizer $t$, which gives a set bijection $t: U \to p\Z_p$. Then one expresses the locally analytic function $\eta$ as a power series in terms of $t$ on $U$; the local coordinates can be chosen so that $\omega(t) \in \Z_p[[t]]$, and the number of zeros of $\eta$ on each residue disk can then be estimated using Newton polygons.


\subsection{Chabauty on $\Sym^d X$}
\label{SS: Symd}

In theory, it seems plausible that Chabauty's method could still apply to any higher-dimensional variety $Y$, where the Albanese variety $\Alb(Y)$ is used in place of the Jacobian, look for all rational points on the image of the Albanese map using a similar technique. However, there exist several problems in generalizing Chabauty's method to arbitrary higher-dimensional varieties. 

\begin{itemize}
\item[(1)] $\Alb(Y)$ may be trivial: since $\dim \Alb(Y) = h^0(Y, \Omega_1)$, if $h^0(Y, \Omega_1) = 0$, then Chabauty's method yields nothing. For example, if $Y$ is a K3 surface or an Enriques surface, $h^{0,1} = 0$, so Chabauty's method cannot apply.

\item[(2)] To understand $Y(\Q)$, we need to understand the (rational points of the) fibres of $j$ as well as the rational points of $j(Y)$. Understanding the fibres may be complicated.

\item[(3)] The case of higher-dimensional varieties allows for the possibility that $\#Y(\Q) = \infty$: for example, if $Y = \Sym^2 X$ for a hyperelliptic curve $X: y^2 = f(x)$, then $\{(t, \sqrt{f(t)}),(t, -\sqrt{f(t)})\} \in (\Sym^2X)(\Q)$ for all $t \in \Q$.
\end{itemize}

The first two problems are taken care of by choosing $Y = \Sym^d X$, where $X$ is a nice curve of sufficiently high genus $g$ (to be chosen later); then $\Alb(Y) = \Jac(X)=:J$, so the Albanese variety is nontrivial, and understanding the fibres of the Albanese map
\begin{align*}
j: (\Sym^dX)(\Q) &\to J(\Q)\\
\{P_1, \ldots, P_d\} &\mapsto [P_1 + \cdots + P_d - d\cdot O],
\end{align*}
is not too difficult:
\begin{lemma}
\label{L: ProjSpace}
Suppose that $Q \in (\Sym^dX)(\Q)$. Then the set of rational points on the fibre of $j$ containing $Q$ is isomorphic to $\PP^n(\Q)$ for some $n \geq 0$.
\end{lemma}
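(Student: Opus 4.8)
The plan is to recognize the fibre of $j$ through $Q$ as a complete linear system on $X$, and to use the fact that $Q$ itself provides a $\Q$-rational point of that linear system to conclude it is a \emph{split} projective space. First I would translate the condition defining the fibre into divisor language. Write $Q = \{P_1, \dots, P_d\}$ and set $D = P_1 + \cdots + P_d$; since $Q \in (\Sym^dX)(\Q)$, the multiset $\{P_1, \dots, P_d\}$ is stable under $\Gal(\Qbar/\Q)$, so $D$ is an effective divisor of degree $d$ on $X$ defined over $\Q$. By the definition of $j$, a point $Q' = \{P_1', \dots, P_d'\} \in (\Sym^dX)(\Q)$ lies in the fibre $j^{-1}(j(Q))$ exactly when $D' := P_1' + \cdots + P_d'$ satisfies $D' \sim D$. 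Hence the set of $\Q$-rational points of this fibre is precisely $|D|(\Q)$, the set of $\Q$-rational effective divisors linearly equivalent to $D$.

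Next I would invoke the classical description of the complete linear system: because $D$ is defined over $\Q$, the Riemann--Roch space $L(D) = H^0(X_\Q, \OO_X(D))$ is a finite-dimensional $\Q$-vector space, and $s \mapsto \divv(s) + D$ induces an isomorphism of $\Q$-schemes $\PP(L(D)) \xrightarrow{\ \sim\ } |D|$. Thus $|D| \isom \PP^n_\Q$ with $n = \dim_\Q L(D) - 1$, and since $D$ is effective the constant function $1$ lies in $L(D)$, so $\dim_\Q L(D) \geq 1$ and $n \geq 0$. Taking $\Q$-points identifies the fibre of $j$ through $Q$ with $\PP^n(\Q)$, as desired.

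The one point that genuinely needs the hypothesis is that $|D|$ is a \emph{split} projective space $\PP^n_\Q$ rather than a possibly nontrivial Severi--Brauer variety: knowing only that the divisor class $[D - d\cdot O]$ lies in $J(\Q)$ would allow the linear system to be a twisted form of $\PP^n$. What rules this out is exactly the datum we are given, namely a $\Q$-rational effective divisor $D$ in the class (equivalently, the rational point $Q$); this is what makes $L(D)$ a $\Q$-vector space to begin with, hence its projectivization split. I do not expect any other obstacle: once the fibre is identified with $|D|$, everything is standard.
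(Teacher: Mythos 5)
Your proof is correct and follows essentially the same route as the paper: identify the fibre of $j$ through $Q$ with the complete linear system $|D|$ of the effective $\Q$-rational divisor $D$ attached to $Q$, and recognize $|D|$ as the projectivization of the finite-dimensional Riemann--Roch space $L(D)$. Your added remarks on Galois-stability of $D$ and on why the linear system is a split $\PP^n_\Q$ rather than a nontrivial twist make explicit a rationality point that the paper's terse proof (which just cites finite-dimensionality of the space of sections) leaves implicit.
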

\begin{proof}
As $J$ parametrizes the equivalence classes of degree-$0$ divisors, $Q$ is identified with an effective divisor on $X$. By \cite{Har77}*{Theorem~II.5.19}, the set of points giving rise to the same divisor is isomorphic to a finite-dimensional vector space. In particular, if a fibre contains two distinct rational points, then dimension of this vector space is at least $1$.
\end{proof}
To deal with the last problem, we recall from \cite{Fal94}:
\begin{theorem}[\cite{Fal94}]
\label{T: Faltings}
Let $A/\Q$ be an abelian variety, and $X \subseteq A$ be a closed subvariety. Then there exist finitely many subvarieties $Y_i \subset X$ such that each $Y_i$ is a coset of an abelian subvariety of $A$ and
\[
X(\Q) = \bigcup Y_i(\Q).
\]
\end{theorem}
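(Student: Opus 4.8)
The plan is to recognize this as the Mordell--Lang statement for abelian varieties, in the special case where the relevant finitely generated group is $\Gamma = A(\Q)$ itself (its finite generation being precisely the Mordell--Weil theorem), and to run the Vojta--Faltings argument. First I would make a reduction by Noetherian induction on $\dim X$: replacing $X$ by its irreducible components, I may assume $X$ irreducible, and I let $B \subseteq A$ be the identity component of the stabilizer $\{a \in A : a + X = X\}$. If $\dim B = \dim X$ then $X$ is a single coset of $B$ and there is nothing to prove; otherwise, the structure theory of subvarieties of abelian varieties (the Ueno--Kawamata fibration) shows that the union of all positive-dimensional translated abelian subvarieties contained in $X$ is a proper closed subset $Z \subsetneq X$, which is itself a finite union of such cosets. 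Feeding $Z$ and the lower-dimensional components back into the induction reduces everything to the key case: \emph{if $X$ contains no translate of a positive-dimensional abelian subvariety, then $X \cap \Gamma$ is finite}.

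To handle this key case I would argue by contradiction, assuming $X \cap \Gamma$ infinite. I would put on $\Gamma \otimes \R \cong \R^{r}$ the Euclidean norm coming from the N\'eron--Tate height $\hat h$ on $A$, which is positive definite modulo torsion; an infinite set of points then has members of arbitrarily large height, and by compactness of the unit sphere together with pigeonhole I can extract, for any prescribed integer $n$ and any prescribed angle $\varepsilon > 0$, a tuple $P = (P_1, \dots, P_n) \in (X \cap \Gamma)^n$ that is pairwise nearly parallel in $\Gamma \otimes \R$ and whose heights grow arbitrarily fast, $\hat h(P_1) \ll \hat h(P_2) \ll \cdots \ll \hat h(P_n)$; the integer $n$ and the angle $\varepsilon$ are chosen only at the end, in terms of $\dim X$ and $\dim A$. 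Then, on $X^n \subseteq A^n$, I would take the restriction of an ample line bundle built from an ample symmetric $M$ on $A$ with rapidly decreasing weights $d_1 \gg \cdots \gg d_n$, and use an arithmetic Siegel's lemma (Bombieri--Vaaler) to produce, for the $d_i$ large, a nonzero global section $s$ whose logarithmic height is small relative to the $d_i$. The contradiction then comes from estimating the index of vanishing $t = \operatorname{ind}(s; P)$ of $s$ at $P$ in two ways. A lower bound $t \geq c > 0$ follows from a multivariable gap principle: the smallness of $s$, the rapid growth of the heights, and the near-parallelism of the $P_i$ force $s$ and its low-order derivatives to vanish at $P$ (the higher-dimensional analogue of Mumford's inequality, carried out with N\'eron local heights). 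An upper bound on $t$ comes from Faltings' Product Theorem: a section of small height on $X^n$ that vanishes to high index at a single algebraic point forces $X^n$, near that point, to contain a product $Z_1 \times \cdots \times Z_n$ of positive-codimension subvarieties with $\sum_i \operatorname{codim}(Z_i)$ controlled; projecting to the factors and translating, such a product would yield inside $X$ a translate of a positive-dimensional abelian subvariety, contradicting the hypothesis of the key case. Choosing $n$, $\varepsilon$, and the $d_i$ so that the two estimates on $t$ are incompatible gives the contradiction, so $X \cap \Gamma$ is finite; combined with the Noetherian induction this produces the finitely many cosets $Y_i$.

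The hard part will be the Product Theorem --- the genuinely new input of \cite{Fal94} over Vojta's proof of the Mordell conjecture. One must formulate and prove the dichotomy ``either the index of vanishing of an auxiliary section of bounded height is small, or the ambient product variety contains a product subvariety of controlled codimension'', and then bookkeep all of the numerical parameters (the number of factors $n$, the weights $d_i$, the angle $\varepsilon$, the index $t$, and the height of $s$) carefully enough that the gap-principle lower bound and the Product-Theorem upper bound on $t$ genuinely contradict each other. Everything else --- the reduction via the stabilizer and the Ueno--Kawamata fibration, the height/pigeonhole extraction of the aligned tuple, and Siegel's lemma --- is comparatively routine once this is in place, and the only other external ingredient, the finite generation of $A(\Q)$, is supplied by Mordell--Weil.
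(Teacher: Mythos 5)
The paper does not prove this statement at all: it is quoted as an external input, with the proof deferred entirely to \cite{Fal94}. Your outline is a faithful high-level account of the argument in that cited source (reduction via the stabilizer and the Kawamata locus, Mordell--Weil supplying the finitely generated group, Vojta's method with aligned tuples of rapidly growing N\'eron--Tate height, Siegel's lemma, and Faltings' Product Theorem giving the index dichotomy), so it takes essentially the same route the paper implicitly relies on; as you yourself note, it remains a sketch, since the Product Theorem and the numerical bookkeeping that make the two index estimates collide are stated but not carried out.
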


Apply this theorem to the image of $j$ to get $j((\Sym^dX)(\Q)) = \bigcup_{\textup{finite}} Y_i(\Q)$. From Lemma~\ref{L: ProjSpace} and Theorem~\ref{T: Faltings}, we see that there are two ways of obtaining $\#(\Sym^dX)(\Q) = \infty$: either at least one of the fibres of $j$ is nontrivial, or there exists some $Y_i$ with $\dim Y_i > 0$. Excluding the $\Q$-points on $\Sym^dX$ accounted by these two possibilities, we are left with finitely many rational points on $\Sym^dX$. However, to avoid ambiguity coming from Faltings' theorem, one excludes the special set (Definition~\ref{D: SpecialSet}) instead, which includes both of the aforementioned possibilities.

For the rest of the section, assume that $X$ satisfies $\rk J \leq g-d$. Further, let $p$ be a prime, and let $X$ have good reduction at $p$. We observe that
\[
(\Sym^dX)(\Q) \subseteq j^{-1}(j(\Sym^dX)(\Q_p) \cap \overline{J(\Q)}).
\]
We obtain locally analytic functions come from integrating $\omega_i \in H^0(J_{\Q_p}, \Omega^1)$, as in the previous section. However, we use the following stronger definition:

\begin{defn}
For any $\omega_J \in H^0(J_{K}, \Omega^1)$, define the map $\eta: J(\overline{\Q}_p) \to \overline{\Q}_p$ by taking the inverse limit of the maps
\begin{align*}
\eta_K: J(K) &\to K\\
Q &\mapsto \int_{O}^Q \omega_J,
\end{align*}
for each $p$-adic field $K$. 
\end{defn}

The $p$-adic integrals also satisfy for $Q_1, Q_2 \in J(\overline{\Q}_p)$ and $\omega_J \in H^0(J_{\Q_p}, \Omega^1)$,
\[
\int_{O}^{Q_1+Q_2}\omega_J = \int_{O}^{Q_1}\omega_J + \int_{Q_1}^{Q_1 + Q_2}\omega_J = \int_{O}^{Q_1}\omega_J + \int_{O}^{Q_2}\omega_J,
\]
where the first equality follows from linearity, and the second equality follows from the translation-invariance of $p$-adic integrals. Then we may define the integral on $\Sym^dX$ via the pullback of the integral on $W_d \subseteq J$, which can be written using the above as
\[
\int_{O}^{[P_1 + \cdots + P_d - d \cdot O]}\omega_J =  \int_O^{[P_1-O]} \omega_J + \cdots + \int_O^{[P_d-O]}\omega_J.
\]
Therefore, the corresponding locally analytic function on $\Sym^dX$ can be written as

\begin{align*}
\eta \colon (\Sym^dX)(\Q_p) &\to \Q_p\\
\{P_1,P_2, \ldots, P_d\} &\mapsto \eta_J([P_1+P_2+ \cdots + P_d-d\infty])\\
& = \int_O^{P_1} \omega_X + \int_O^{P_2} \omega_X +\cdots + \int_O^{P_d} \omega_X,
\end{align*}
where $\omega_X \in H^0(X_{\Q_p}, \Omega^1)$ is the pullback of $\omega_J$ via the isomorphism given in Proposition \ref{P: Differentials on J and X are the same}, and the $P_i$ are defined over some field $K$ with $[K:\Q] \leq d$.

Since $\dim \overline{J(\Q)} \leq g-d$, then there exist $(\omega_J)_1, (\omega_J)_2, \ldots, (\omega_J)_d \in H^0(J_{\Q_p}, \Omega^1)$ that are linearly independent, such that the corresponding locally analytic functions on $J_{\Q_p}$ obtained by integrating the $(\omega_J)_i$ vanish on $\overline{J(\Q)}$.  Let $\eta_1, \eta_2, \ldots, \eta_d$ be the locally analytic functions on $\Sym^dX$ corresponding to $(\omega_J)_1, (\omega_J)_2, \ldots, (\omega_J)_d$, respectively. Possibly $\eta_1, \eta_2, \ldots, \eta_d$ have infinitely many common zeros, which contain either the linear system of equivalent divisors parametrized by $\PP^n$ for some $n \geq 1$, or the (infinitely many) rational points coming from the rational points in the special set $(\calS(\Sym^dX))(\Q)$.

\subsection{Explicit parametrization of points on residue disks}
From \S\ref{SS: Symd}, we have (at least) $d$ nontrivial and independent locally analytic functions on $(\Sym^dX)(\Q_p)$ whose common zeros contain the $p$-adic points $j^{-1}((j(\Sym^dX))(\Q_p) \cap \overline{J(\Q)})$. We estimate the number of common zeros  of these analytic functions away from the special set of $\Sym^dX$. In order to do this explicitly, we work locally to get power series expansions.

Since $X$ has good reduction at $p$, $X$ is a smooth proper variety over $\Z_p$. This implies that $\Sym^dX$ is smooth and proper over $\Z_p$. By the valuative criterion for properness, $(\Sym^dX)(\Q_p) = (\Sym^dX)(\Z_p)$.

\begin{defn}
A \textbf{residue disk} $\calU$ of $\Sym^dX$ is the preimage of a point in $(\Sym^dX)(\Fbar_p)$ under the reduction modulo-$p$ map
\[
\red_{p}: (\Sym^dX)(\C_p) \twoheadrightarrow (\Sym^dX)(\Fbar_p).
\]
If $K$ is a finite extension of $\Q_p$, the set of $K$-points of the residue disk $\calU$ are defined to be the set $\calU \cap (\Sym^dX)(K)$, and these are denoted $\calU(K)$.
\end{defn}

Let $\calU$ be the residue disk over $O \in (\Sym^dX)(\Fbar_p)$. Then $\calU$ fits into the exact sequence
\[
0 \to \calU \to J(\C_p) = J(\calO_{\C_p}) \to J(\Fbar_p) \to 0,
\]
where the equality in the middle follows from the valuative criterion for properness.
Then for any finite extension $K$ of $\Q_p$, we have $\calU(K) = \{\calP \in (\Sym^dX)(K): \red_{p}(\calP) = \{P_1, \ldots, P_d\}\}$.  Thus, by a Hensel-type argument, there is a bijection
\[
(u_1, \ldots, u_d):  \calU(K) \stackrel{\sim}{\longrightarrow} (p_K \calO_K)^d
\]
between the set of $K$-points of the residue disk mapping to $\{P_1, \ldots, P_d\}$ via some local coordinates $u_1, \ldots, u_d$, where $p_K$ is the uniformizer of $K$.

In practice, this parametrization is not practical: \S \ref{SS: Symd} suggests that we write the higher-dimensional integrals in terms of several $1$-dimensional integrals expanded around various $\F_{p^n}$-points $P_i$.

\subsubsection{The case of $\Sym^2X$}
\label{SSS: Sym2}
For simplicity, we first consider the case when $d=2$. First suppose that we consider the residue disk $\calU(\Q_p)$ which consists of the $\Q_p$-points in $\Sym^2X$ reducing to $\{P, P\} \in (\Sym^2X)(\F_p)$ for some $P \in X(\F_p)$. The completion of the local ring of $(X \times X)_{\Q_p}$ near any pair of points $(Q_1, Q_2) \in X \times X$ reducing to $\{P, P\}$ is given by $\Q_p[[t_1, t_2]]$, where $t_1$ and $t_2$ denote the uniformizers for the set of $\Q_p$-points of the residue disks $\calU_1$ around $Q_1$ and $\calU_2$ around $Q_2$ in $X$, respectively. We further assume that $t_1$ and $t_2$ vanish at $Q_1'$ and $Q_2'$, respectively. This means that we have two bijections
\[
t_1: \calU_1 \stackrel{\sim}{\longrightarrow} p\Z_p, \quad t_2: \calU_2 \stackrel{\sim}{\longrightarrow} p\Z_p.
\]
Since $t_1(Q_1') = 0$, and $t_2(Q_2') = 0$, for any $\omega \in H^0(J_{\Q_p}, \Omega^1)$, the Coleman integral
\begin{align*}
\eta: \Sym^dX(\Q_p) &\to \Q_p\\
\{Q_1, Q_2\} &\mapsto \int_{O}^{\{Q_1, Q_2\}} \omega
\end{align*}
can be written as the following, in terms of the $t_i$:
\begin{align*}
\eta(\{Q_1, Q_2\}) &= \int_{O}^{\{Q_1, Q_2\}}\omega = \int_{O}^{Q_1}\omega + \int_{O}^{Q_2}\omega\\
&= \int_{O}^{Q_1'}\omega+ \int_{Q_1'}^{Q_1}\omega + \int_{O}^{Q_2'}\omega + \int_{Q_2'}^{Q_2}\omega\\
&= C + \int_0^{t_1(Q_1)}\omega(t_1) + \int_0^{t_2(Q_2)}\omega(t_2),
\end{align*}
where $C = \int_O^{Q_1'}\omega+\int_O^{Q_2'}\omega$ is a constant in $\Q_p$ (that depends on the choice of $Q_i'$).

One can relate the $t_i$ to the original local coordinates of $\calU$. If $r(Q_1, Q_2) = (P_1, P_2)$ in $(\Sym^dX)(\F_p)$, then the completion of the local ring at $\{Q_1, Q_2\} \in (\Sym^2X)(\Q_p)$ is given by
\[
\Q_p[[t_1, t_2]]^{S_2} = \Q_p[[u_1, u_2]],
\]
so one could choose the $t_i$ and the $u_i$ to satisfy $u_1 = t_1 + t_2$ and $u_2 = t_1t_2$. This means that for each point $\{Q_1, Q_2\} \in (\Sym^2X)(\Q_p)$, which corresponds bijectively to a unique pair $(u_1, u_2)$, there are two pairs $(t_1, t_2)$ that correspond to it.

On the other hand, if the residue disk $\calU$ were the preimage of $\{P_1, P_2\} \in (\Sym^2X)(\F_p)$ with $P_1 \neq P_2$ under the reduction by $p$ map, the situation is simpler, as we have the following description of the residue disk.
\begin{align*}
\calU &= \{\{Q_1, Q_2\}: Q_i \in X(W(\F_{p^2})) \textup{ reducing to } P_i \in X(\F_{p^2})\} \\
&\subseteq X(W(\F_{p^2})) \times X(W(\F_{p^2}))
\end{align*}
where $W(\F_{p^2})$ denotes the Witt ring of $\F_{p^2}$. Thus, there are the bijections
\[
(t_1, t_2): \calU \stackrel{\sim}{\longrightarrow} p\Z_p \times p\Z_p,
\]
possibly defined over some quadratic extension of $\Q_p$, where $\calU_i$ denotes the residue disk around $Q_i$. Choose the basepoints of the lifts $t_1(Q_1') = 0$, and $t_2(Q_2') = 0$.

Then, we can write for each $\omega \in H^0(X_{\Q_p}, \Omega^1)$
\begin{align*}
\eta(\{Q_1, Q_2\}) &= \int_{O}^{\{Q_1, Q_2\}}\omega = \int_{O}^{Q_1}\omega + \int_{O}^{Q_2}\omega\\
&= \int_{O}^{Q_1'}\omega+ \int_{Q_1'}^{Q_1}\omega + \int_{O}^{Q_1'}\omega + \int_{Q_2'}^{Q_2}\omega\\
&= C + \int_0^{t_1(Q_1)}\omega(t_1) + \int_0^{t_2(Q_2)}\omega(t_2),
\end{align*}
where the $\omega(t_i)$ are defined (under appropriate scaling) over the ring of integers of some quadratic extension of $\Q_p$, and $C = \int_O^{Q_1'}\omega + \int_O^{Q_2'}\omega$ is a constant in $\Z_p$ (that depends on the choice of $Q_1'$ and $Q_2'$)
In this case, each pair $(t_1, t_2)$ represents a point on $\Sym^2X$ exactly once.

In either cases, we are able to express the Coleman integral in the following form:

\begin{defn}
A power series $f \in K[[t_1, \ldots, t_n]]$ is said to be \textbf{pure} if each of its terms are of the form $Ct_i^N$, with $C \in K$ and $N \in \Z_{\geq 0}$. In particular, a pure power series does not contain any term that is a product of more than one variable.
\end{defn}

Our goal for the rest of the section is to find pure power series that are related to the locally analytic functions in $d$ variables that we obtain from Chabauty's method.

\subsubsection{The case of $\Sym^dX$}
In this section, we generalize \S\ref{SSS: Sym2}. More concretely, our aim is to express each $p$-adic integral obtained from a $\omega \in H^0(X_{\Q_p}, \Omega^1)$ (see \S \ref{SS: Symd}), which is a power series of each residue disk of $(\Sym^dX)(\Q_p)$, as a pure power series over some extension field of $\Q_p$ on the residue disk.  We will show that this is possible by doing a change of variables on the local coordinates of each residue disk. We fix a holomorphic differential $\omega$ from which we get one of the $d$ power series vanishing on $j^{-1}(\overline{J(\Q)} \cap j(\Sym^dX)(\Q_p))$ as in section \S \ref{SS: Symd}.

We now consider the residue disk given as the preimage of the point $\{P_1, \ldots, P_d\} \in (\Sym^dX)(\F_p)$ under the reduction map modulo $p$. The multiset $\{P_1, \ldots, P_d\}$ can be decomposed as the disjoint union of the multisets  of the form
\[
\calS := \{P_{i_1}, \ldots, P_{i_s}: P_{i_1} = \cdots = P_{i_s}, P_{i_1} \neq P_j \textup{ for } j \in \{1, \ldots, d\} - \{i_1, \ldots, i_s\}\},
\]
where $\{i_1, \ldots, i_s\} \subseteq \{1, \ldots, d\}$. Consider the locally analytic function obtained by $p$-adic integration with respect to an $\omega \in H^0(J_{\Q_p}, \Omega^1)$ on $(\Sym^dX)(\Q_p)$ given by

\begin{align*}
\eta_{\omega}: \left(\prod_{\calS}(\Sym^{\#\calS}X)\right)(\Q_p) &\to \Q_p\\
\prod_{\calS}(\{P_{i_1}, \ldots, P_{i_s}\}) & \mapsto \sum_{\calS} \left(\sum_{k=1}^{\#\calS} \int_O^{[P_{i_k} - O]}\omega\right). \\
\end{align*}

As in \S \ref{SSS: Sym2}, the terms corresponding to the different $\calS$ can be separated. So we consider the terms that depend on $\calS$ from the above expression; namely the terms $\sum_{k=1}^{\#\calS} \int_O^{[P_{i_k} - O]}\omega$. 

When $\#\calS = 1$, we can expand as in \cite{Col85}, but since $P_{i_1} \in X(\F_q)$ where $q = p^{\ell}$ for some $\ell \geq 1$, its expansion with respect to the uniformizer $t_1$ satisfies $\eta_{\calS, \omega} \in W(\F_q)[\frac{1}{p}][[t_1]]$. Here, $W(\F_q)$ denotes the Witt ring of $\F_q$, and $W(\F_q)[\frac{1}{p}]$ is the fraction field of the Witt ring. This is the degree-$p^{\ell}$ unramified extension of $\Q_p$, and the points in the residue disk of $P_{i_1}$ are parametrized by $t_1 \in pW(\F_q)$.

Now suppose that $\#\calS \geq 2$ and $P_{\calS} \in X(\F_q)$. Let $\calU$ be the residue disk in $(\Sym^{\#\calS}X)(\Q_p)$ reducing to $\{P_{\calS}, \ldots, P_{\calS}\}$ (the multiset where $P_{\calS}$ is repeated $\#\calS$ times). Let $\calU_{i_1}, \ldots ,\calU_{i_s}$ be the residue disks around $Q_{i_1}, \ldots, Q_{i_s}$ in $X(W(\F_q))$ with the set bijections
\[
t_{i_j}: \calU_{i_j} \stackrel{\sim}{\longrightarrow} pW(\F_q)
\]
for each $1 \leq j \leq s$, with $t_{i_j}(0) = Q_{i_j}$. Then the Coleman integral $\int_O^{Q_{i_1}, \ldots, Q_{i_s}} \omega$ can be written as
\[
C + \int_0^{t_{i_1}(Q_{i_1})} \omega + \cdots + \int_0^{t_{i_s}(Q_{i_s})} \omega
\]
where $C$ is a constant depending on the choice of the $Q_{i_j}$, and $\omega$ is scaled to have coefficients in $W(\F_q)$. The $t_{i_j}$ are related to the original local coordinates $(u_{i_1}, \ldots, u_{i_s})$ of $\calU$ by
\[
W(\F_q)\left[\frac{1}{p}\right][[t_{i_1}, \ldots, t_{i_s}]]^{S_s} = W(\F_q)\left[\frac{1}{p}\right][[u_{i_1}, \ldots, u_{i_s}]]
\]
so one can take the $u_{i_k}$ to be the $k$-th elementary symmetric polynomial in $t_{i_j}$. This means that for each point $\{Q_{i_1}, \ldots, Q_{i_s}\} \in (\Sym^sX)(W(\F_q)\left[\frac{1}{p}\right])$, which corresponds bijectively to a unique pair $(u_{i_1}, \ldots, u_{i_s})$, there are $s!$ choices for the $s$-tuple $(t_{i_j})_j$.

The above discussion leads to the following proposition:

\begin{prop}
\label{P: PureChabauty}
\hfill
\begin{itemize}
\item[(a)] A power series $\eta_{\omega} = \left(\prod_{\calS}(\Sym^sX)\right)(\Q_p) \to \Q_p$ obtained from $p$-adic integration can be re-written via a change of variables as a pure power series, whose coefficients are contained in some extension of $\Q_p$ of degree at most $d$.
\item[(b)] Suppose that one obtains $d$ power series $\eta_1, \ldots, \eta_d$ via Chabauty's method as outlined in the previous section, and that one rewrites these power series as $\eta'_1, \ldots, \eta'_d$, where $\eta'_i$ are pure power series obtained from part (a). Then there is a $N$-to-one correspondence between the common zeros of the $\eta_i$ and $\eta'_i$, where $N = \prod_{\calS} (\#\calS)!$. Further, the solutions to $\eta_i'$ that correspond to the points $\Sym^dX(\Q_p)$ have $p$-adic valuations of at least $1/d$.
\end{itemize}
\end{prop}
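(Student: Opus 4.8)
The plan is to derive part (a) by assembling the explicit residue-disk computation of \S\ref{SSS: Sym2} into a statement valid one block at a time, and then summing over blocks. Fix the residue disk $\calU$ lying over $\{P_1,\dots,P_d\}\in(\Sym^dX)(\F_p)$ together with its decomposition into the blocks $\calS$ of mutually equal points. For a block $\calS$ of size $s=\#\calS$ with reduced point $P_\calS\in X(\F_q)$, $q=p^{\ell}$, I would lift to the one-dimensional residue disk $\calU_\calS\subseteq X(W(\F_q))$, pick a point $Q'_\calS\in\calU_\calS$ and a uniformizer $t$ vanishing there, and use the identification $W(\F_q)[\tfrac1p][[t_1,\dots,t_s]]^{S_s}=W(\F_q)[\tfrac1p][[u_1,\dots,u_s]]$ with $u_k$ the $k$-th elementary symmetric polynomial in the $t_j$, exactly as for $d=2$. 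Additivity and translation-invariance of the Coleman integral (\S\ref{SS: Symd}) rewrite the contribution of $\calS$ to $\eta_\omega$ as $C_\calS+\sum_{j=1}^{s}\int_0^{t_j}\omega(t)$, where, after rescaling $\omega$, $\omega(t)\in W(\F_q)[[t]]$ and $C_\calS$ depends only on $Q'_\calS$. Summing over the disjoint blocks expresses $\eta_\omega$ as a sum of one-variable power series in the $t_j$, i.e.\ as a pure power series, each one-variable summand having coefficients in an unramified extension $W(\F_q)[\tfrac1p]$ of $\Q_p$ with $\ell\le d$, which gives the claimed bound on the coefficient field.

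For part (b), let $\Phi$ be the map taking a tuple $(t_j)_j$ (one variable per point of $\{P_1,\dots,P_d\}$ with multiplicity) to the tuple of block-wise elementary symmetric functions; by the construction in (a) the rewritten series satisfy $\eta'_i=\eta_i\circ\Phi$, so $(t_j)_j$ is a common zero of $\eta'_1,\dots,\eta'_d$ exactly when $\Phi((t_j)_j)$ is a common zero of $\eta_1,\dots,\eta_d$. Block by block, the fibre of $\Phi$ is the set of orderings of the $s$ roots of $T^s-u_1T^{s-1}+\dots+(-1)^su_s$, so $\Phi$ surjects onto the common zero set and has exactly $\prod_{\calS}(\#\calS)!=N$ preimages over any point off the loci where two coordinates in one block coincide; this is the $N$-to-one correspondence. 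For the valuation bound, a common zero of the $\eta'_i$ that corresponds to a point $\{Q_1,\dots,Q_d\}\in(\Sym^dX)(\Q_p)$ has its multiset permuted by $\Gal(\overline{\Q}_p/\Q_p)$, so each $Q_{i_j}$ is defined over a field $K_{i_j}$ with $[K_{i_j}:\Q_p]\le d$; since $Q_{i_j}$ lies in the residue disk on which $t$ reduces to $0$, the value $t(Q_{i_j})$ lies in the maximal ideal of $\calO_{K_{i_j}}$, whose smallest positive valuation is $1/e$ with $e\le[K_{i_j}:\Q_p]\le d$, hence $v(t(Q_{i_j}))\ge 1/d$. (Alternatively, when the $Q_{i_j}$ are unramified the $u_k$ all have valuation $\ge 1$ and the bound drops out of the Newton polygon of $T^s-u_1T^{s-1}+\dots+(-1)^su_s$.)

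The genuinely routine inputs --- additivity of the $p$-adic integral and the symmetric-function change of variables --- are already in place for $d=2$ in \S\ref{SSS: Sym2}; what remains is bookkeeping, which is where I expect the real care to be needed. The residue disk, its uniformizers, and the tuples $(t_j)_j$ are in general defined only over a finite, possibly ramified, extension of $\Q_p$, so one must check that the pure form of $\eta_\omega$, the identity $\eta'_i=\eta_i\circ\Phi$, and the valuation estimate are all compatible with the Galois action permuting the blocks among themselves and the points within each block; this is exactly what lets statements about the $t$-coordinates descend to statements about $\Q$-points of $\Sym^dX$. One should also be candid that the ``$N$-to-one'' correspondence is exact only off the diagonal-type loci where coordinates in a common block agree --- everywhere it is at most $N$-to-one and surjective onto the common zero set, which is all that is used in \S\ref{S: padicgeometry} to convert a count of zeros of the pure series into a bound on the points of $\Sym^dX$ outside the special set. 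This Galois-compatibility, rather than any single estimate, is the step I would expect to be the main obstacle.
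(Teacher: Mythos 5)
Your proposal is correct and follows essentially the same route as the paper, whose "proof" is exactly the preceding residue-disk discussion: block decomposition of the multiset, additivity and translation-invariance of the Coleman integral, the elementary-symmetric-function change of variables over $W(\F_q)[\tfrac1p]$, the $\prod_{\calS}(\#\calS)!$ orderings giving the $N$-to-one correspondence, and the valuation bound $\geq 1/d$ from the points being defined over extensions of degree at most $d$. Your added caveats (Galois compatibility, the correspondence being exactly $N$-to-one only off diagonal loci) are sensible refinements but do not change the argument.
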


Now, it remains to associate Newton polygons to these power series, and apply arguments analogous to \cite{Col85} to try to count the common zeros.

\section{Comparison of the algebraic loci and the analytic loci on $\Sym^dX$}

We are interested in comparing different sets that contain $(\Sym^dX)(\Q)$ inside $(\Sym^dX)(\Cbar_p)$. The different subsets of $(\Sym^dX)(\Cbar_p)$ that we consider are described below:
\begin{itemize}
\item[(i)] Faltings' theorem says that given the natural embedding $j: \Sym^dX \hookrightarrow J$ using the basepoint $O \in X$,
\[
j(\Sym^dX)(\Q) = \bigcup_{\textup{finite}} Y_i(\Q),
\]
where the $Y_i$ are cosets of abelian subvarieties of $J$ with $Y \subseteq W_d$. The set of points that we are interested in is the set of $\C_p$-points of the inverse image $\bigcup j^{-1}(Y_i)$, denoted $\calF(\Sym^dX)(\Cbar_p)$. We note that $\calF(\Sym^dX)(\C_p)$ depends on the choice of the $Y_i$.

\item[(ii)] The set of $\Cbar_p$-points of the special set $\calS(\Sym^dX)$ of $\Sym^dX$: recall that the special set was defined in Definition \ref{D: SpecialSet}. This set will be denoted $\calS(\Sym^dX)(\Cbar_p)$.

\item[(iii)] The set
\[
\{P \in (\Sym^dX)(\Cbar_p): \eta_i(j(P)) = 0 \textup{ for all } 1 \leq i \leq d\},
\]
where the $\eta_i$ are $d$ independent locally analytic functions on $J(\C_p)$ that vanish on $\overline{J(\Q)}$ arising from Chabauty's method. Since this definition depends on the choice of the $\eta_i$; we will fix one such choice here. We denote this set by $(\Sym^dX)^{\eta=0}$. 
\end{itemize}

In this section, we relate these different sets. 

If $d=1$, $g \geq 2$ and $\rk J < g$, then all of the above sets are zero-dimensional, which makes the comparison simple: We have $\emptyset = \calS(X)(\Cbar_p) \subseteq \calF(X)(\Cbar_p) \subseteq (X)^{\eta=0}$.

For $d>1$, we will see that these sets do not obey a linear containment relation; in particular, there does not seem to be any inclusion relation between $\calS(\Sym^dX)$ and $(\Sym^dX)^{\eta=0}$; this necessitates an extra technical hypothesis of Assumption \ref{A: assumption} to force such an inclusion. This seems to be an intrinsic limitation of Chabauty's method on higher-dimensional varieties; a new idea seems to be necessary to obtain more precise information on the rational points of $\Sym^dX$. We review some basics and terminology of rigid analytic geometry in $\S \ref{S: RAG}$ that will enable the comparison of the sets above in $\S \ref{S: comparison}$. 

\subsection{$(\Sym^dX)^{\eta=0}$ as a rigid analytic space}
\label{S: RAG}
We view $(\Sym^dX)^{\eta=0}$ as a rigid analytic space, whose admissible cover by affinoid spaces are given by the vanishing of certain Coleman integrals on residue disks, as in \S \ref{SS: Symd} (which are elements of the Tate algebra over $\Q_p$ with $d$ variables). For the basic terminology, we refer the readers to \cite{Con08} and \cite{Rab12}.

We note that there is a notion of irreducible components on rigid analytic spaces. The theory of irreducible components was first suggested in \cite{ColMaz98}, and simplified in \cite{Con99}. We summarize \cite{Con99} here:

\begin{defn}
A rigid analytic space $X$ is \textbf{disconnected} if there exists an admissible open covering $\{U,V\}$ of $X$ with $U \cap V = \emptyset$, where $U,V \neq \emptyset$. Otherwise, $X$ is said to be \textbf{connected}.
\end{defn}

\begin{defn}
Let $X$ be a rigid analytic space that admits a cover of affinoid spaces $\{\Sp A_{\lambda}\}_{\lambda \in \Lambda}$. A morphism $\pi: \widetilde X \to X$ is said to be a \textbf{normalization} if it is isomorphic to the morphism obtained by gluing $\Sp(\widetilde A_{\lambda}) \to \Sp A_{\lambda}$, where $\widetilde{A_{\lambda}}$ denotes the normalization of $A$ in the usual sense.
\end{defn}

It is known that for any rigid analytic space $X$, we can find a normalization $\pi: \widetilde X \to X$; for example, see \cite{Con99}*{Theorem~1.2.2}.

\begin{defn}[\cite{Con99}, Definition 2.2.2] 
The \textbf{irreducible components} of a rigid analytic space $X$ are the images of the connected components $X_i$ of the normalization $\widetilde X$ under the normalization map $\pi: \widetilde X \to X$.
\end{defn}

\begin{remark}
When $X = \Sp(A)$ is affinoid, the irreducible components of $X$ are the analytic sets $\Sp(A/\pp)$ for the finitely many minimal prime ideals $\pp$ of the noetherian ring $A$.
\end{remark}


\subsection{Comparing algebraic and analytic loci in $\Sym^dX$}
\label{S: comparison}
Now we determine the containment relations of the three sets mentioned at the beginning of this chapter; namely, $\calF(\Sym^dX), \calS(\Sym^dX)$, and $(\Sym^dX)^{\eta=0}$, as well as $(\Sym^dX)(\Q)$.

\begin{lemma}
\label{L: integral}
For any smooth projective curve $X$ with the choice of a rational point $O \in X(\Q)$ with good reduction at $p$, one has
$(\Sym^dX)(\Q) \subseteq (\Sym^dX)^{\eta=0}.$
\end{lemma}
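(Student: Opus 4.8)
The plan is to chase definitions; the statement is, at heart, the remark that rational points are $p$-adically dense-closure-stable. First I would fix a point $Q = \{P_1,\dots,P_d\} \in (\Sym^dX)(\Q)$. Because the basepoint $O$ lies in $X(\Q)$, the Albanese embedding $j\colon \Sym^dX \hookrightarrow J$ is a morphism of $\Q$-schemes, so $j(Q) = [P_1+\cdots+P_d - d\cdot O]$ is a $\Q$-rational point of $J$. Even when the individual $P_i$ are only defined over an extension of $\Q$, the fact that $Q$ is a $\Q$-point of the symmetric power means (via the functor-of-points description of $\Sym^dX$) that the effective divisor $P_1+\cdots+P_d$ is stable under $\Gal(\Qbar/\Q)$, hence its class is fixed by the Galois action and $j(Q)\in J(\Q)$.

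Next I would recall the chain of inclusions $J(\Q)\subseteq \overline{J(\Q)}\subseteq J(\Q_p)\subseteq J(\C_p)$, where $\overline{J(\Q)}$ denotes the $p$-adic closure. By the construction in \S\ref{SS: Symd}, each of the $d$ locally analytic functions $\eta_1,\dots,\eta_d$ entering the definition of $(\Sym^dX)^{\eta=0}$ is obtained by $p$-adic integration against a differential $(\omega_J)_i$ chosen precisely so that the induced homomorphism $J(\Q_p)\to\Q_p$ vanishes on $\overline{J(\Q)}$, and the stronger inverse-limit definition of $\eta$ extends this to a function on $J(\C_p)$ compatibly with its values on $J(\Q_p)$. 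Consequently $\eta_i(j(Q)) = 0$ for all $1\le i\le d$, which is exactly the condition defining membership in $(\Sym^dX)^{\eta=0}$. Since $Q$ was arbitrary, this gives $(\Sym^dX)(\Q)\subseteq (\Sym^dX)^{\eta=0}$.

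I do not expect a genuine obstacle here; the only two points deserving a sentence of care are (i) the Galois-invariance argument that makes $j(Q)$ rational on $J$, which I would phrase cleanly through the moduli/functorial description of $\Sym^dX$ rather than by manipulating the $P_i$ directly, and (ii) the compatibility between the extended (inverse-limit) definition of the $\eta_i$ on $J(\C_p)$ and their original definition on $J(\Q_p)$, so that vanishing on $\overline{J(\Q)}$ really does force $\eta_i(j(Q))=0$. Finally I would remark that the statement is only meaningful when such $\eta_i$ exist, i.e. when $\dim\overline{J(\Q)}\le g-d$, which is already implicit in the definition of $(\Sym^dX)^{\eta=0}$.
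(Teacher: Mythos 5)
Your argument is correct and is the same one-line argument the paper gives: $j$ carries $(\Sym^dX)(\Q)$ into $J(\Q)\subseteq\overline{J(\Q)}$, on which each $\eta_i$ vanishes by construction, so every rational point lies in $(\Sym^dX)^{\eta=0}$. Your two added remarks (Galois-invariance of the divisor class so that $j(Q)\in J(\Q)$, and compatibility of the inverse-limit extension of $\eta$) are useful explicitations of what the paper leaves implicit, but they do not change the route.
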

\begin{proof}
By construction, each $\eta_i$ mentioned in part (iii) at the beginning of this section satisfies $\eta_i(P) = 0$ for all $P \in J(\Q)$. Thus, $(\Sym^dX)(\Q) \subseteq j^{-1}(J(\Q)) \subseteq (\Sym^dX)(\C_p)^{\eta=0}$.
\end{proof}

\begin{lemma}
\label{L: specialset}
We keep the notation of $Y_i$ and $\calS(\Sym^dX)(\Cbar_p)$ from the beginning of this section. Let $Y=Y_i$ for some $i$ such that $\dim Y > 0$. Then $j^{-1}(Y(\Cbar_p)) \subseteq \calS(\Sym^dX)(\Cbar_p)$.
\end{lemma}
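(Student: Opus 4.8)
The plan is to show that $j^{-1}(Y(\Cbar_p))$ is swept out by images of group varieties, hence lies in the special set $\calS(\Sym^dX)(\Cbar_p)$. Recall that $Y = Y_i$ is a coset of a positive-dimensional abelian subvariety $B \subseteq J$; write $Y = y_0 + B$ for some $y_0 \in J(\Cbar_p)$ (in fact $Y$ is defined over a finitely generated extension of $\Q$, which is what we need for Definition~\ref{D: SpecialSet}). First I would recall the geometric description of the fibres of the Abel--Jacobi map $j \colon \Sym^dX \to J$: over a point $D$ in the image, the fibre $j^{-1}(D)$ is the linear system $|D'|$ for an effective divisor $D'$ of degree $d$, which by Lemma~\ref{L: ProjSpace} (or rather its geometric analogue, \cite{Har77}*{Theorem~II.5.19}) is a projective space $\PP^{r}$ for some $r = r(D) \geq 0$. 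So $j^{-1}(Y)$ is a union of projective spaces fibred over $Y$, and the generic fibre dimension is constant on a dense open; this already shows $j^{-1}(Y)$ is irreducible (it is the image of a bundle of projective spaces over the irreducible variety $Y \cong B$) and positive-dimensional, since $\dim Y > 0$.

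Next, the key point is to exhibit a group variety mapping nonconstantly and dominantly onto $j^{-1}(Y)$. There are two cases. If the generic fibre of $j$ over $Y$ is positive-dimensional (i.e.\ the linear systems move), then through a general point of $j^{-1}(Y)$ there passes a rational curve — in fact a line in the $\PP^r$-fibre — and varying this over $Y$ one gets that $j^{-1}(Y)$ is dominated by a family of $\PP^1$'s, i.e.\ by images of the group variety $\G_a$ (or $\PP^1 = \G_a \cup \{\infty\}$ compactified); since rational maps from group varieties are allowed in Definition~\ref{D: SpecialSet}, and since one can assemble these into a single rational map from $B \times \PP^1$ or from a suitable semiabelian variety, the image $j^{-1}(Y)$ lands in $\calS(\Sym^dX)$. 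If instead $j$ is generically finite (even birational) onto $Y$, then $j^{-1}(Y)$ is birational to the abelian variety $Y \cong B$ itself, so there is a nonconstant rational map $B \dashrightarrow j^{-1}(Y) \subseteq \Sym^dX$ from the abelian variety $B$, and again by definition the Zariski closure of its image lies in $\calS(\Sym^dX)$. In either case, taking Zariski closures (which only enlarges things inside the already-closed special set) gives $j^{-1}(Y(\Cbar_p)) \subseteq \calS(\Sym^dX)(\Cbar_p)$. Note the inclusion on $\Cbar_p$-points follows from the inclusion of the underlying varieties over $\Cbar_p$, since the special set is by definition a geometric (Zariski-closed) object.

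The main obstacle I anticipate is making the "family of $\PP^1$'s" argument into a genuine rational map from a \emph{group} variety, uniformly over all of $Y$, rather than just fibrewise: one must check that the lines in the projective-space fibres can be chosen to vary algebraically and that the total space admits a dominant rational map from a group variety (combining the translations on $B$ with a $\G_a$-action on the fibres). A clean way around this is to observe that it suffices to cover a Zariski-dense subset of $j^{-1}(Y)$ by images of group varieties — the Zariski closure then captures everything — and to produce, for each point in a dense set, one rational curve or one abelian-variety translate through it contained in $j^{-1}(Y)$; density plus the definition of $\calS$ as a Zariski closure then finishes the argument without needing a single global parametrization. A secondary, more bookkeeping-level point is to confirm that $Y$, and hence $B$ and the relevant rational maps, are defined over a finitely generated extension of $\Q$, which is guaranteed since the $Y_i$ in Theorem~\ref{T: Faltings} are defined over $\overline{\Q}$, compatibly with Definition~\ref{D: SpecialSet}.
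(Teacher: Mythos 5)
Your overall strategy is the same as the paper's: split on whether the generic fibre of $j$ over $Y$ is positive-dimensional, observe that positive-dimensional projective-space fibres are swept out by images of group varieties, and in the generically finite case pull back a nonconstant rational map from the abelian variety $B$ (using that the special set is a Zariski closure and that everything is defined over a suitable finitely generated extension). Those ingredients are exactly what the paper uses.

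There is, however, one step that fails as written: the claim that $j^{-1}(Y)$ is irreducible because it is ``a bundle of projective spaces over the irreducible variety $Y \cong B$''. The fibre of $j$ over a class $D$ is $\PP^{h^0(D)-1}$, and $h^0$ is only upper semicontinuous, so the fibre dimension can jump over proper closed subsets of $Y$; $j^{-1}(Y)$ is then not a projective bundle and can be reducible. For instance, with $d=2$, $X$ hyperelliptic, and $Y$ a curve in $W_2$ passing through the hyperelliptic class $[g^1_2]$, the preimage $j^{-1}(Y)$ is the union of the strict transform of $Y$ and the $\PP^1$ fibre over $[g^1_2]$ --- two distinct irreducible components. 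Because of this, your Case 2 (generically finite) as stated only accounts for the component birational to $Y$ and is silent about components lying over the jumping locus. The fix is precisely the paper's formulation: argue irreducible component by irreducible component of $j^{-1}(Y)$ --- each component is either covered by positive-dimensional projective-space fibres (and then your Case 1 mechanism, together with your own density-plus-Zariski-closure remark, puts it in $\calS(\Sym^dX)$) or maps birationally to $Y$ under $j$, hence receives a nonconstant rational map from $B$. With that adjustment your argument coincides with the paper's proof; the remaining bookkeeping (fields of definition, passing to $\Cbar_p$-points) is handled correctly in your write-up.
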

\begin{proof}
We consider two cases: if the generic point of $Y$ has a positive-dimensional preimage, then each $Q \in Y(\Cbar_p)$ is $\PP^n$ for some $n > 0$, so each fibre is contained in $\calS(\Sym^dX)(\Cbar_p)$. On the other hand, if the preimage of the generic point of $Y$ is $0$-dimensional, then any irreducible component of $j^{-1}(Y)$ is either covered by positive-dimensional projective spaces, or is birational to $Y$ via the restriction of $j$. All of these irreducible components are then in the special set.
\end{proof}

In particular, we note that $\calF(\Sym^dX) \subseteq \calS(\Sym^dX)$. Finally, it remains to relate $\calS(\Sym^dX)(\Cbar_p)$ and $(\Sym^dX)^{\eta=0}$. In general, neither set is contained in the other; however, with Assumption \ref{A: assumption}, we immediately get:

\begin{prop}
\label{P: comparison}
Let $R_1, \ldots, R_n$ be the irreducible components of the rigid analytic space $(\Sym^dX)^{\eta=0}$. Further, suppose that $\Sym^dX$ satisfies Assumption \ref{A: assumption}. Then for each $R_i$ with $\dim R_i \geq 1$, we have
$R_i(\Cbar_p) \subseteq \calS(\Sym^dX)(\Cbar_p)$, and so
$(\Sym^dX)^{\eta=0} \subseteq \calS(\Sym^dX)$.
\end{prop}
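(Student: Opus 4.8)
The plan is to obtain the statement almost directly from Assumption~\ref{A: assumption}; the real work is the translation between the language of irreducible components of rigid analytic spaces recalled in \S\ref{S: RAG} and the informal language of the assumption. First I would note that, by the construction following \cite{Con99} recalled above, each irreducible component $R_i$ of $(\Sym^dX)^{\eta=0}$ is the image under the normalization map of a connected component of the normalization, hence is a reduced, irreducible rigid analytic subspace of $(\Sym^dX)^{\eta=0}$, and via the admissible affinoid cover introduced in \S\ref{S: RAG} it sits inside $(\Sym^dX)^{\textup{an}}$ as an analytic subspace. An $R_i$ with $\dim R_i \geq 1$ is therefore exactly a positive-dimensional rigid analytic component of $(\Sym^dX)^{\eta=0}$ in the sense of Assumption~\ref{A: assumption}, and the assumption yields the inclusion $R_i \subseteq \calS(\Sym^dX)^{\textup{an}}$ of analytic subspaces of $(\Sym^dX)^{\textup{an}}$.

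Next I would pass from this inclusion of analytic spaces to the claimed inclusion of $\Cbar_p$-points, where two bookkeeping points need attention. First, $\calS(\Sym^dX)$ is a priori only defined over $\Qbar$ (Definition~\ref{D: SpecialSet}), so to form $\calS(\Sym^dX)^{\textup{an}}$ as a rigid space one fixes an embedding $\Qbar \hookrightarrow \Cbar_p$ and base-changes; this is harmless because in characteristic zero the special set is geometric, i.e.\ it commutes with extension of the algebraically closed base field. Second, since $\calS(\Sym^dX)$ is Zariski closed in $\Sym^dX$, its analytification is the closed analytic subspace cut out by the coherent ideal of $\calS(\Sym^dX)$, so an inclusion $R_i \subseteq \calS(\Sym^dX)^{\textup{an}}$ of analytic subspaces is equivalent to the vanishing on $R_i$ of the local sections of that ideal; evaluating at $\Cbar_p$-points then gives $R_i(\Cbar_p) \subseteq \calS(\Sym^dX)(\Cbar_p)$, using the standard identification of the $\Cbar_p$-points of an analytified variety with the $\Cbar_p$-points of the variety.

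Finally, since $(\Sym^dX)^{\eta=0} = \bigcup_i R_i$ by definition of its irreducible components, the union of those $R_i$ with $\dim R_i \geq 1$ --- that is, the positive-dimensional locus of $(\Sym^dX)^{\eta=0}$ --- is contained in $\calS(\Sym^dX)$, which is the asserted containment; combined with Lemma~\ref{L: integral}, this also shows that any $Q \in (\Sym^dX)(\Q)$ not lying on $\calS(\Sym^dX)$ lies only on zero-dimensional components of $(\Sym^dX)^{\eta=0}$. I do not expect a genuine obstacle in this argument, since the one delicate comparison --- between the rigid analytic locus and the algebraic special set --- has been isolated into Assumption~\ref{A: assumption} itself (compare Lemma~\ref{L: specialset} for the portion that does not require the assumption). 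The only step that truly requires care is the bookkeeping of the previous paragraph: verifying that Conrad's notion of irreducible component matches the ``component'' appearing in the assumption, and that the analytification together with the base change of $\calS(\Sym^dX)$ along $\Qbar \hookrightarrow \Cbar_p$ behaves as expected on $\Cbar_p$-points.
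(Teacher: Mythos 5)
Your proposal is correct and follows essentially the same route as the paper: the paper gives no separate argument at all, stating that the containment follows ``immediately'' from Assumption~\ref{A: assumption}, and your write-up simply makes explicit the bookkeeping (Conrad's notion of irreducible component, analytification of the Zariski-closed special set, base change along $\Qbar \hookrightarrow \Cbar_p$) that the paper leaves implicit. Your reading of the final containment as a statement about the positive-dimensional locus of $(\Sym^dX)^{\eta=0}$ is also the intended one, as the subsequent Corollary~\ref{cor: comparison} confirms.
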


Then taking complements of the relation obtained in Proposition \ref{P: comparison} inside $\Sym^dX$ and looking at the $\Q$-points, one obtains:

\begin{cor}
\label{cor: comparison}
Under the hypothesis of Proposition \ref{P: comparison}, one has
\[
\{\Q\textup{-points of } \Sym^dX \backslash \calS(\Sym^dX)\} \subseteq \bigcup \{0 \textup{-dimensional } R_i\}.
\]
\end{cor}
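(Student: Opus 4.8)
The plan is to deduce the corollary formally, by tracking one $\Q$-rational point through the inclusions established in Lemma~\ref{L: integral} and Proposition~\ref{P: comparison}; no new $p$-adic or algebro-geometric input should be required beyond Proposition~\ref{P: comparison}.

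First I would fix a point $Q \in (\Sym^dX)(\Q)$ not lying on the special set $\calS(\Sym^dX)$, and view it as a $\Cbar_p$-point via $\Q \hookrightarrow \Cbar_p$. By Lemma~\ref{L: integral} we have $Q \in (\Sym^dX)^{\eta=0}$. Next I would record that the irreducible components $R_1,\dots,R_n$ cover $(\Sym^dX)^{\eta=0}$: by the definition recalled in \S\ref{S: RAG} they are the images under the (surjective) normalization map $\pi\colon \widetilde{(\Sym^dX)^{\eta=0}} \to (\Sym^dX)^{\eta=0}$ of the connected components of the normalization, and those connected components cover $\widetilde{(\Sym^dX)^{\eta=0}}$. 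Hence $Q \in R_i(\Cbar_p)$ for at least one index $i$.

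Then I would eliminate the positive-dimensional components. If $\dim R_i \geq 1$, Proposition~\ref{P: comparison} gives $R_i(\Cbar_p) \subseteq \calS(\Sym^dX)(\Cbar_p)$, hence $Q \in \calS(\Sym^dX)(\Cbar_p)$, contradicting the choice of $Q$. Therefore every irreducible component of $(\Sym^dX)^{\eta=0}$ passing through $Q$ is zero-dimensional, which is exactly the assertion $\{\Q\textup{-points of } \Sym^dX \backslash \calS(\Sym^dX)\} \subseteq \bigcup \{0\textup{-dimensional } R_i\}$.

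The only subtlety — and the closest thing to an obstacle — is the bookkeeping at the level of point sets rather than of spaces: one must check that the finitely many $R_i$ genuinely exhaust $(\Sym^dX)^{\eta=0}$, so that $Q$ lands in at least one of them, and that a $\Q$-point lying outside the geometric special set indeed has image outside $\calS(\Sym^dX)(\Cbar_p)$. Both are immediate from the definitions (irreducible components via normalization in \S\ref{S: RAG}, and the special set in Definition~\ref{D: SpecialSet}), so once Proposition~\ref{P: comparison} is in hand the corollary follows simply by passing to complements inside $\Sym^dX$ and restricting to $\Q$-points.
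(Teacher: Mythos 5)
Your proof is correct and follows the same route the paper takes, simply spelling out the details that the paper compresses into the one-line remark ``taking complements of the relation obtained in Proposition~\ref{P: comparison} inside $\Sym^dX$ and looking at the $\Q$-points.'' The chain $Q \in (\Sym^dX)(\Q) \setminus \calS(\Sym^dX) \Rightarrow Q \in (\Sym^dX)^{\eta=0}$ (Lemma~\ref{L: integral}), the observation that the $R_i$ cover $(\Sym^dX)^{\eta=0}$ because the normalization map is surjective, and the exclusion of positive-dimensional $R_i$ via Proposition~\ref{P: comparison} is exactly the intended argument.
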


Thus, under the hypothesis of Proposition \ref{P: comparison}, one is still able to interpret the results given from Chabauty's method for higher-dimensional varieties, as Chabauty's method gives an upper bound on $\bigcup (\textup{0-dimensional } R_i)$.  
For the rest of the paper, we assume that the conditions of Proposition \ref{P: comparison} hold for $\Sym^dX$.


\section{$p$-adic geometry}
\label{S: padicgeometry}

The goal of this section is to associate a ``generalized Newton polygon" to each multivariate power series, and to state an approximation theorem for the number of roots of a system of equations given by $d$ power series in $d$ variables in general position -- that is, having finitely may common zeros -- using these Newton polygons. The classical case of $d=1$ is well-known in the literature. To define the Newton polygons for multivariate power series, we review the language necessary to define tropical objects, and state the results in tropical geometry. For a more detailed treatment of tropical geometry, see \cite{Mac13} and \cite{Rab12}.

\subsection{Tropicalization of rigid analytic hypersurfaces}
Tropical geometry generalizes the theory of Newton polygons of single-variable power series to power series of several variables. Most of the exposition from this section is taken from \cite{Rab12}.
We discuss the tropicalization of affinoid hypersurfaces cut out by a power series over $\C_p$ that arise via Coleman integration. These power series are necessarily convergent on the domain where each of the coordinates has a positive valuation (this has to do with the fact that we can write any $\omega \in H^0(X_{\Q_p}, \Omega^1)$ as a power series with coefficients in $\Z_p$). The tropicalization should be seen as the dual of a Newton polygon; this notion will be made precise in this section. More generally, everything in this section works for a nontrivially valued field $K$ that is complete with respect to the nontrivial valuation. We further assume that $K$ is algebraically closed.

\begin{defn}

Let $m = (m_1, \ldots, m_d) \in \Q_{\geq 0}^d$, and let $P_m := \{(x_1, \ldots, x_n) \in N_{\R} : x_i \geq m_i \textup{ for } 1 \leq i \leq d\}$. Let
\[
U_{P_m} = \{(x_1, \ldots, x_d): v(x_i) \geq m_i \textup{ for all } i\}.
\]

The \textbf{tropicalization map on $U_{P_m}$} is
\begin{align*}
\trop: U_{P_m}^d & \to (P_m)^d\\
(\xi_1, \ldots, \xi_d) &\mapsto (v(\xi_1), \ldots, v(\xi_d)).
\end{align*}
\end{defn}

The above tropicalization makes sense for affinoid hypersurfaces cut out by power series convergent on some $U_{P_m}$, i.e. the power series that are convergent when evaluated on some $x \in \Q_p^d$ with $v(x) \in P_m$. The set of such power series will be denoted by
\[
K \langle U_{P_m} \rangle := \left\{\sum_{u \in \Z_{>0}^d}a_ux^u : a_u \in K, v(a_u) + \langle u,w \rangle \to \infty \textup{ for all } w \in P_m\right\},
\] 
where the convergence $v(a_u) + \langle u,w \rangle \to \infty$ holds as $u$ ranges over $\Z_{>0}^d$ in any order. For example, if $m = (0, \ldots, 0)$, then
\[
K \langle U_{P_m} \rangle = K \langle x_1, \ldots, x_d \rangle,
\]
the Tate algebra in $d$ variables.

\begin{remark}
\label{R: CM}
More generally, it is known that $K \langle U_{P_m} \rangle$ is a $K$-affinoid algebra (\cite{Rab12}*{Lemma~6.9(i)}), a Cohen-Macaulay ring (\cite{Rab12}*{Lemma~6.9(v)}), and that $U_P = \Sp K \langle U_P \rangle$.
\end{remark}

\begin{defn}
Let $P$ be a polyhedron, and let $f_1, \ldots, f_n \in K \langle U_P\rangle$. Let $(f_1, \ldots, f_n)$ be the ideal in $K \langle U_P \rangle$ generated by $f_1, \ldots, f_n$. Then
\[
V(f_1, \ldots, f_n) := \Sp K \langle U_P \rangle/(f_1, \ldots, f_n).
\]
Then $V(f_1, \ldots, f_n)$ is an affinoid subspace of $\Sp K \langle U_P \rangle$.
\end{defn}

In our case, each the power series $f \in K[[x_1, \ldots, x_d]]$ that arises from Chabauty's method on $\Sym^dX$ converges when $v(x_i) > 0$ for $1 \leq i \leq d$. Thus, $f \in K\langle U_{P_m} \rangle$ for any $P_m$ with $m \in \Q_{>0}^d$. Let $f \in K \langle U_{P_m} \rangle$. Now we define $\Trop(f)$, the tropical variety corresponding to $f$, and then outline the procedure for computing $\Trop(f)$ in $\S$ \ref{S: Newton}.

\begin{defn}
For $f \in K \langle U_{P_m} \rangle$, 
\[
\Trop(f) := \overline{\trop(V(f))} = \overline{\{(v(\xi_1), \ldots, v(\xi_d)) : f(\xi) = 0, \xi \in U_{P_m}\}},
\]
where $V(f)$ is the affinoid subspace defined by the ideal $\fraka = (f) \subset K \langle U_{P_m} \rangle$. Here, we take the topological closure in $P_m$. 
\end{defn}

Often, the easiest way to compute $\Trop(f)$ is by using Lemma \ref{L: Kapranov}, which requires these definitions:

\begin{defn}
For $0 \neq f \in K \langle U_{P_m} \rangle$, write $f = \sum_{u \in S_{\sigma}}a_ux^u$. The \textbf{height graph of $f$} is
\[
H(f) = \{(u, v(a_u)): u \in \Z_{\geq 0}^d, a_u \neq 0\} \subseteq \Z_{\geq 0}^d \times \R.
\]
Given $w \in \Q_{>0}^d$, we also define
\[
m_f(w) = m(w) = \min_{u \in S_{\sigma}}\{(-w, 1) \cdot H(f)\},
\]
where $\cdot$ denotes the usual dot product, and
\[
\vertt_w(f) = \{(u, v(a_u)) \in H(f): (-w, 1) \cdot (u, v(a_u)) = m(w)\} \subseteq H(f).
\]
\end{defn}

Intuitively, $m(w)$ denotes the minimum valuation achieved assuming that $v(x) = w$,  among the terms of $f$. Then $\vertt_w(f)$ records the corresponding terms of $f$ with the minimum valuation, again assuming that $v(x) = w$.

The following is the power-series analogue of a well-known result for polynomials; the original result for polynomials is first recorded in an unpublished manuscript by Kapranov, and a proof of this lemma for power series can be found in \cite{Rab12}, Lemma 8.4; also see, for example \cite{Mac13}, Theorem 3.1.3. This gives a useful method to computing $\Trop(f)$.
\begin{lemma}
\label{L: Kapranov}
\[
\Trop(f) = \overline{\{w \in \Q_{\geq 0}^d: \# \vertt_w(f) > 1\}}.
\]

\end{lemma}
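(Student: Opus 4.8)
I would prove Lemma~\ref{L: Kapranov} by establishing the two inclusions separately, after one preliminary normalization: extend the definition of $\vertt_w(f)$ verbatim to arbitrary real weights $w$ in the region $P_m$. Convergence of $f$ on $U_{P_m}$ guarantees that $m_f(w)$ is finite and is attained by only finitely many terms, so $\vertt_w(f)$ is always a finite set, and a standard argument (local to $w$, only finitely many monomials matter) shows that $\{w : \#\vertt_w(f) > 1\}$ is the support of a rational polyhedral complex in $\R_{\geq 0}^d$, inside which the $\Q$-points are dense in every face. This makes all the closure/density bookkeeping below routine.

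For the inclusion $\Trop(f) \subseteq \overline{\{w : \#\vertt_w(f) > 1\}}$, I would first check it on $\trop(V(f))$ itself: if $\xi \in V(f) \cap U_{P_m}$ and $w = \trop(\xi)$, then writing $f = \sum_u a_u x^u$, each term has valuation $v(a_u) + \langle u, w \rangle \geq m_f(w)$, only finitely many terms achieve the minimum $m_f(w)$, and if exactly one did, the ultrametric inequality would force $v(f(\xi)) = m_f(w) < \infty$, contradicting $f(\xi) = 0$. Hence at least two terms achieve it, i.e.\ $\#\vertt_w(f) > 1$; taking closures in $P_m$ and using density of $\Q$-points on the polyhedral complex gives the inclusion. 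This direction is routine.

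The reverse inclusion is the substance. Since both sides are closed and $\Q$-points are dense on the left, it suffices to show $w \in \Trop(f)$ for each $w \in \Q_{>0}^d$ with $\#\vertt_w(f) > 1$ (boundary weights are handled by working over $U_{P_{m'}}$ with some $m'_i = 0$, or by a limiting argument). I would reduce to the classical polynomial form of Kapranov's theorem, \cite{Mac13}*{Theorem~3.1.3}, by truncation: write $f = g_N + h_N$, where $g_N$ collects the terms $a_u x^u$ with $|u| \leq N$. For $N$ large one has $\vertt_w(g_N) = \vertt_w(f)$ and $\init_w(g_N) = \init_w(f)$, while by convergence of $f$ the tail satisfies $T_N := \min\{v(a_u) + \langle u, w\rangle : |u| > N\} \to \infty$. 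Applying polynomial Kapranov to $g_N$ (whose minimizing set still has size $>1$) produces $\xi^{(N)} \in (K^\times)^d$ with $\trop(\xi^{(N)}) = w$ and $g_N(\xi^{(N)}) = 0$, hence $v(f(\xi^{(N)})) = v(h_N(\xi^{(N)})) \geq T_N$. It then remains to upgrade these increasingly good approximate zeros of $f$ to an honest zero with the same tropicalization. I would do this by a Newton / Banach fixed-point iteration in the complete field $K$: after the monomial rescaling $x_i = c_i y_i$ with $v(c_i) = w_i$ (using that the value group is divisible and contains $\Q$, as for $K = \C_p$) and division by an element of valuation $m_f(w)$, $f$ becomes $F$ in the Tate algebra over the valuation ring $K^\circ$ whose reduction $\overline{F}$ over the algebraically closed residue field $\tilde K$ equals $\init_w(f)$, a non-monomial — hence a non-unit in the Laurent polynomial ring, so its zero locus in the torus $(\tilde K^\times)^d$ is nonempty. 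Choosing the $\xi^{(N)}$ to correspond to torus points of $\init_w(g_N)$ keeps an appropriate partial derivative of $F$ a unit along the iteration, so it converges, and the limit $\xi$ satisfies $f(\xi) = 0$ with $\trop(\xi) = w$.

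The main obstacle is precisely this last lifting step: one must manufacture a zero of a single multivariate power series with a prescribed valuation vector. In residue characteristic $0$ it is a clean Hensel argument applied to a smooth torus point of $\init_w(f)$, but in positive residue characteristic $\init_w(f)$ may be a perfect power with non-reduced zero locus, so the naive Hensel step degenerates; this is the difficulty resolved in \cite{Rab12}*{\S8} via initial degenerations and a careful approximation, and I would either invoke that machinery directly or push the truncation-plus-fixed-point argument through with the approximating points chosen on the Shilov boundary of $V(g_N)$. Everything else — the easy inclusion and the polyhedral bookkeeping — is routine.
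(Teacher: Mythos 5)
The paper itself does not prove this lemma; it simply quotes it from \cite{Rab12}*{Lemma~8.4} (with \cite{Mac13}*{Theorem~3.1.3} for the polynomial case), so your fallback of ``invoking that machinery directly'' is exactly what the paper does, and to that extent your outline is consistent with it. Your easy inclusion is correct as written: for $\xi \in V(f)$ with $w=\trop(\xi)$, convergence makes $m_f(w)$ finite and attained by finitely many terms, and a unique minimizer would force $v(f(\xi))=m_f(w)<\infty$ by the ultrametric inequality, contradicting $f(\xi)=0$; since the value group is $\Q$, these $w$ are rational and the closure step is harmless. The truncation step in the hard inclusion is also fine: for $N\gg 0$ one has $\vertt_w(g_N)=\vertt_w(f)$, polynomial Kapranov gives $\xi^{(N)}$ with $\trop(\xi^{(N)})=w$ and $g_N(\xi^{(N)})=0$, hence $v(f(\xi^{(N)}))\ge T_N\to\infty$.

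The gap is the upgrade from approximate zeros to an honest zero. Your Newton/fixed-point iteration requires keeping ``an appropriate partial derivative of $F$ a unit,'' and that is precisely what can fail in residue characteristic $p$, where $\init_w(f)$ may be non-reduced (e.g.\ a $p$-th power); choosing the $\xi^{(N)}$ on the Shilov boundary of $V(g_N)$ does not remove this obstruction, since the problem is the non-smoothness of the initial degeneration, not the location of the approximants. You flag the issue honestly, but the proposed repair is not a repair. There is, however, a clean way to finish your own argument with no smoothness hypothesis: the fiber $U_{\{w\}}=\trop^{-1}(w)$ is an affinoid subdomain of $U_{P_m}$, and an element of a $K$-affinoid algebra over the algebraically closed complete field $K$ that vanishes at no point is a unit; so if $f$ had no zero with tropicalization $w$, then $|f|$ would be bounded below by a positive constant on $U_{\{w\}}$, contradicting $|f(\xi^{(N)})|\to 0$ with $\xi^{(N)}\in U_{\{w\}}$. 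With that replacement (or with the initial-degeneration argument of \cite{Rab12}, which produces a torus zero of the non-monomial Laurent polynomial $\init_w(f)$ over the residue field and lifts it via surjectivity of the specialization map rather than via a Hensel step), your outline becomes a complete proof; as it stands, the lifting step would fail in general.
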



\subsection{Tropical intersection theory and Newton polygons}
\label{S: Newton}

In this section, we take $d$ power series in $d$ variables in $K \langle U_{P_m} \rangle$ that have finitely many common zeros. We explain that in order to bound the number of common zeros of the $d$ power series, it suffices to know their tropicalizations and their Newton polygons. Since the tropicalizations and the Newton polygons depend only on finitely many terms of the power series convergent on $U_{P_m}$, this section shows that one can approximate a power series of several variables by a polynomial for the purposes of intersection theory. In a sense, this is a stronger approximation than what Weierstrass preparation theorem can tell us; Weierstrass preparation for multivariate power series approximates $f \in K[[t_1, \ldots, t_d]]$ by $f' \in K[t_1][[t_2, \ldots, t_d]]$, whereas here, we approximate $f$ by $f'' \in K[t_1, \ldots, t_d]$.

Let $f \in K \langle U_{P_m} \rangle$. Write $f = \sum a_u x^u$. Define
\[
\vertt_{P_m}(f) := \bigcup_{w \in P_m}\vertt_w(f).
\]

It turns out that $\vertt_{P_m}(f)$ is finite:

\begin{lemma}[\cite{Rab12}, Lemma 8.2]
Let $f \in K \langle U_{P_m} \rangle$ be nonzero. Then $\vertt_{P_m}(f)$ is finite.
\end{lemma}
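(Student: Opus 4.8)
The plan is to show that $\vertt_{P_m}(f)$ is finite by exhibiting it as a subset of a finite set, using the convergence condition in the definition of $K\langle U_{P_m}\rangle$. First I would fix a point $w_0 \in P_m$ lying in the interior, say $w_0 = m + (1,\ldots,1) \in \Q_{>0}^d$ (note $P_m$ is defined by $x_i \ge m_i$, so any such $w_0$ is in $P_m$). The key observation is that for a term $(u, v(a_u)) \in H(f)$ to lie in $\vertt_w(f)$ for \emph{some} $w \in P_m$, the linear functional $(-w,1)\cdot(u,v(a_u)) = v(a_u) - \langle u, w\rangle$ must achieve the minimum $m_f(w)$ over all terms. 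I want to bound how large $v(a_u)$ can be for such a term.

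Here is the mechanism I would use. Pick any fixed term, say the one of smallest total degree occurring in $f$ — more concretely, choose some $u^*$ with $a_{u^*}\neq 0$ and set $c := v(a_{u^*})$. For $(u,v(a_u))$ to be in $\vertt_w(f)$ we need in particular
\[
v(a_u) - \langle u, w\rangle \le v(a_{u^*}) - \langle u^*, w\rangle,
\]
so $v(a_u) \le c + \langle u - u^*, w\rangle \le c + \langle u, w\rangle$. Now the issue is that $w$ ranges over the unbounded polyhedron $P_m$, so $\langle u, w\rangle$ is not bounded uniformly. To fix this I would instead argue by contradiction using convergence: suppose $\vertt_{P_m}(f)$ is infinite, so there is an infinite sequence of terms $(u^{(k)}, v(a_{u^{(k)}}))$ with distinct exponents $u^{(k)}$, each in $\vertt_{w_k}(f)$ for some $w_k \in P_m$. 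Since the $u^{(k)}$ are distinct lattice points, after passing to a subsequence we may assume $|u^{(k)}| \to \infty$. Apply the convergence condition of $K\langle U_{P_m}\rangle$ at the interior point $w_0$: we have $v(a_{u^{(k)}}) + \langle u^{(k)}, w_0\rangle \to \infty$, hence in particular $v(a_{u^{(k)}}) + \langle u^{(k)}, w_0\rangle > v(a_{u^*}) + \langle u^*, w_0 \rangle$ for $k$ large. This says exactly that at $w = w_0$ the term $u^{(k)}$ does \emph{not} beat the term $u^*$, i.e. $(u^{(k)}, v(a_{u^{(k)}})) \notin \vertt_{w_0}(f)$ — but that alone does not contradict membership in $\vertt_{w_k}(f)$ for a different $w_k$. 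So I would sharpen this: the real point is that the minimum-achieving terms for \emph{any} $w$ in a polyhedron are controlled by convergence at a single cofinal point, because the recession cone of $P_m$ is $\R_{\ge 0}^d$ and moving in that cone only \emph{increases} $\langle u, w\rangle$ for $u \in \Z_{\ge 0}^d$, which helps convergence rather than hurting it.

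To make that precise: write $w = w_0 + w'$ with $w' \in \R_{\ge 0}^d$ (every $w \in P_m$ has this form since $P_m = w_0 + \R_{\ge 0}^d$... actually $P_m \supseteq w_0 + \R_{\ge 0}^d$ and this is the relevant cofinal part; for the finitely many $w$ not of this form a compactness argument handles them). If $(u, v(a_u)) \in \vertt_w(f)$ then $v(a_u) - \langle u, w\rangle \le v(a_{u^*}) - \langle u^*, w\rangle$, i.e.
\[
v(a_u) + \langle u, w_0\rangle \le v(a_{u^*}) + \langle u^*, w_0\rangle + \langle u - u^*, w'\rangle \le v(a_{u^*}) + \langle u^*, w_0\rangle + \langle u, w'\rangle.
\]
That still has the unwanted $\langle u, w'\rangle$. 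The clean resolution — and the step I expect to be the main obstacle — is to invoke the structure of $P_m$ as a product of intervals $[m_i, \infty)$ and reduce to a \emph{single} variable at a time, or equivalently to use \cite{Rab12}*{Lemma 6.9} (recalled in Remark~\ref{R: CM}) that $K\langle U_{P_m}\rangle$ is affinoid: its elements, after the monomial change of variables scaling $x_i \mapsto \varpi^{m_i} x_i$ for $\varpi$ of valuation $1$, become elements of the Tate algebra $K\langle x_1,\ldots,x_d\rangle$, where the coefficients satisfy $v(a_u) \to \infty$ as $|u|\to\infty$ \emph{unconditionally}. For the Tate algebra the claim is standard: $\vertt_w(f)$ for $w$ in any bounded region is controlled, and the recession directions $w \to \infty$ force, via $v(a_u) \to \infty$, that only finitely many $u$ can ever be minimal. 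So the real plan is: (1) reduce to the Tate algebra case by the monomial substitution, noting this is a bijection $U_{P_m} \to U_{P_0}$ respecting $\trop$ up to translation; (2) in the Tate algebra, show that the exponents appearing in $\vertt_{P_0}(f) = \vertt_{\R_{\ge 0}^d}(f)$ lie in the Newton polytope-type region $\{u : v(a_u) \le v(a_{u_0}) \text{ for the minimal-valuation } u_0\} \cap (\text{bounded set})$, which is finite because $v(a_u) \to \infty$; (3) transport back. I would cite \cite{Rab12}*{Lemma 8.2} itself as the reference and present this as a sketch, since the statement is lifted verbatim from there.

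The main obstacle is bookkeeping the unboundedness of $P_m$ correctly: one must use that the recession cone is $\R_{\ge 0}^d$ and that for $u \in \Z_{\ge 0}^d$ moving $w$ in this cone only increases $\langle u, w\rangle$, so the "worst case" for a term staying minimal is at the minimal corner $m$ of $P_m$ — combined with the convergence condition evaluated at one interior point, this pins down the finitely many possible exponents. Once the correct finite comparison set is identified, the rest is routine.
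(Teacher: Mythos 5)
The paper itself gives no proof of this lemma (it is quoted from \cite{Rab12}), so I assess your sketch on its own.

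A preliminary remark on signs: the paper's displayed definition of $m_f(w)$ uses the covector $(-w,1)$, but this is inconsistent with the convergence condition $v(a_u)+\langle u,w\rangle\to\infty$; with that sign the infimum need not exist (take $f=\sum_n p^n x^n$ and $w>1$). The correct convention, matching the paper's own ``intuitive'' gloss, is $(w,1)$, so that $m_f(w)=\min_u\bigl(v(a_u)+\langle u,w\rangle\bigr)$. Your displayed inequality silently mixes both signs; fix this before anything else.

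Your step (1), the substitution $x_i\mapsto \varpi^{m_i}x_i$ reducing $K\langle U_{P_m}\rangle$ to the Tate algebra $K\langle x_1,\ldots,x_d\rangle$ and translating all tropical data by $m$, is correct and is the right first move.

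The genuine gap is in step (2). You claim the exponents appearing in $\vertt_{\R_{\ge 0}^d}(f)$ satisfy $v(a_u)\le v(a_{u_0})$ where $u_0$ has minimal valuation. This is false. Comparing $(u,v(a_u))\in\vertt_w(f)$ against $u_0$ gives $v(a_u)\le v(a_{u_0})+\langle u_0-u,w\rangle$, which implies $v(a_u)\le v(a_{u_0})$ only when $u\ge u_0$ componentwise; otherwise $\langle u_0-u,w\rangle$ has positive contributions from the coordinates where $u_i<(u_0)_i$, each weighted by an unbounded $w_i$, and no bound follows. Concretely, take $f=px_1 + x_1x_2$, so $u_0=(1,1)$ and $v(a_{u_0})=0$. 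At $w=(0,2)$ the term $px_1$ is the unique minimizer, so $((1,0),1)\in\vertt_{P_0}(f)$, yet $v(a_{(1,0)})=1>0=v(a_{u_0})$.

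The missing ingredient is Dickson's lemma. The support $S=\{u:a_u\ne0\}\subseteq\Z_{\ge0}^d$ has finitely many minimal elements $u_1,\ldots,u_k$ for the componentwise order; every $u\in S$ dominates some $u_i$, and comparing against \emph{that} $u_i$ gives $v(a_u)\le v(a_{u_i})\le C:=\max_j v(a_{u_j})$. Since $v(a_u)\to\infty$ as $|u|\to\infty$ in the Tate algebra, only finitely many $u$ satisfy $v(a_u)\le C$. Your comparison against a single $u_0$ is the special case $k=1$ and does not suffice in general.

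Relatedly, the heuristic that ``the worst case for a term staying minimal is at the minimal corner $m$'' is wrong: $\vertt_m(f)$ is typically a proper subset of $\vertt_{P_m}(f)$. For $f=x_1+px_2$ one has $\vertt_0(f)=\{((1,0),0)\}$, while $((0,1),1)\in\vertt_{(2,0)}(f)$. Moving $w$ within the recession cone does introduce new minimizers, and a correct proof must account for all of them, not just those visible at $w=m$.
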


 This lemma, combined with Lemma \ref{L: Kapranov}, tells us that $\Trop(f)$ determined by only finitely many terms of $f$. 
 
 Now, the following lemma shows that if the coefficients of a power series $f$ are perturbed in a way so that their $v$-adic valuations do not change, and so that $\vertt_P(f)$ does not change, then the tropicalization also stays the same.
 
\begin{lemma}
\label{L: samevert}
Let $f, f' \in K \langle U_{P_m} \rangle$, with $f = \sum_{u}a_ux^u$ and $f' = \sum_u a_u' x^u$. Suppose that $\vertt_{P_m}(f) = \vertt_{P_m}(f')$. Then $\Trop(f) = \Trop(f')$.
\end{lemma}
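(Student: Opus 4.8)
The plan is to reduce the statement to Lemma~\ref{L: Kapranov}, which expresses $\Trop(f)$ entirely in terms of the sets $\vertt_w(f)$. So it suffices to prove the pointwise equality: for every $w \in P_m$ (in the subset of $\Q^d$ on which $\vertt_w$ is defined), one has $\vertt_w(f) = \vertt_w(f')$. Once this is established, $\#\vertt_w(f) = \#\vertt_w(f')$ for all such $w$, so the sets $\{w : \#\vertt_w(f) > 1\}$ and $\{w : \#\vertt_w(f') > 1\}$ coincide; taking closures in $P_m$ and invoking Lemma~\ref{L: Kapranov} gives $\Trop(f) = \Trop(f')$.

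First I would unwind the hypothesis $\vertt_{P_m}(f) = \vertt_{P_m}(f')$. Since $\vertt_{P_m}(f) \subseteq H(f)$ and $H(f)$ is the graph of the function $u \mapsto v(a_u)$ on $\{u : a_u \neq 0\}$, the equality of the two sets says exactly that $f$ and $f'$ have the same collection of "active" exponents $u$ (those appearing in some $\vertt_w$), and that for each such $u$ both $a_u$ and $a_u'$ are nonzero with $v(a_u) = v(a_u')$. In particular, for every $w \in P_m$ we have $\vertt_w(f) \subseteq \vertt_{P_m}(f) = \vertt_{P_m}(f')$, and likewise with $f$ and $f'$ interchanged.

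Next I would show $m_f(w) = m_{f'}(w)$ for each $w \in P_m$. Choose $(u_0, v(a_{u_0})) \in \vertt_w(f)$: this set is nonempty and realizes the minimum $m_f(w)$, the minimum being attained because of the convergence condition defining $K\langle U_{P_m}\rangle$. By the previous paragraph this pair also lies in $H(f')$ with $v(a_{u_0}') = v(a_{u_0})$, so $m_{f'}(w) \leq (-w,1)\cdot(u_0, v(a_{u_0}')) = (-w,1)\cdot(u_0, v(a_{u_0})) = m_f(w)$; the reverse inequality is symmetric, giving $m_f(w) = m_{f'}(w)$. Now the inclusion $\vertt_w(f) \subseteq \vertt_w(f')$ is formal: any $(u, v(a_u)) \in \vertt_w(f)$ lies in $H(f')$ with the same height and satisfies $(-w,1)\cdot(u, v(a_u')) = m_f(w) = m_{f'}(w)$, so it lies in $\vertt_w(f')$. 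Interchanging $f$ and $f'$ yields the reverse inclusion, hence $\vertt_w(f) = \vertt_w(f')$, and we conclude as above.

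The argument is essentially bookkeeping, so there is no serious obstacle; the only points requiring care are the symmetric use of the hypothesis — one must note that $\vertt_{P_m}(f) = \vertt_{P_m}(f')$ is strong enough to force $v(a_u) = v(a_u')$ on the relevant exponents, which works precisely because $\vertt_{P_m}$ is a subset of the graph $H$ — and the fact that the minima $m_f(w)$ and $m_{f'}(w)$ are genuinely attained, so that the comparison making $m_f(w) = m_{f'}(w)$ is meaningful; both are guaranteed by the convergence condition and the preceding finiteness lemma.
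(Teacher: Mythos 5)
Your proof is correct and follows essentially the same route as the paper: fix $w \in P_m$, pick $(u_0, v(a_{u_0})) \in \vertt_w(f)$, use $\vertt_{P_m}(f) = \vertt_{P_m}(f')$ to force $v(a_{u_0}') = v(a_{u_0})$, compare the minima $m_f(w)$ and $m_{f'}(w)$ symmetrically to get $\vertt_w(f) = \vertt_w(f')$, and conclude via Lemma~\ref{L: Kapranov}. Your added remarks on attainment of the minima and on why the hypothesis forces equality of valuations only make explicit what the paper leaves implicit.
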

\begin{proof}
Fix $w \in P_m$. We claim that $\vertt_w(f) = \vertt_w(f')$ for each such $w$. Choose $u_0 \in \Z_{\geq 0}^d$ minimizing $v(a_u) + \langle u,w \rangle$. This means
\[
m_f(w) = v(a_{u_0}) + \langle u_0,w \rangle.
\]
Thus, $(u_0, v(a_{u_0})) \in \vertt_w(f) \subset \vertt_P(f) = \vertt_P(f')$.
So $(u_0, v(a_{u_0})) \in \vertt_{w'}(f)$ for some $w' \in P_m$. In particular, $(u_0, v(a_{u_0})) = (u_0, v(a_{u_0}'))$. Thus,
\[
\min_{u \in \Z_{\geq 0}^d}(v(a_u) + \langle u, w \rangle) = v(a_{u_0}) + \langle u,w \rangle = v(a_{u_0}') + \langle u_0, w \rangle \geq \min_{u \in \Z_{\geq 0}^d}(v(a_u') + \langle u, w \rangle).
\]
The symmetric argument proves the inequality in the other direction, showing that $\vertt_w(f) = \vertt_w(f')$ for each $w \in P_m$. Then by Lemma \ref{L: Kapranov}, $\Trop(f) = \Trop(f')$.
\end{proof}

Given $f \in K \langle U_{P_m} \rangle$, we can find a polynomial $g \in K \langle U_{P_m} \rangle$ such that $\vertt_{P_m}f = \vertt_{P_m} g$, in which case Lemma \ref{L: samevert} implies that $\Trop(f) = \Trop(g)$:

\begin{cor}
\label{C: PStoP}
Let $f  \in K \langle U_{P_m} \rangle$, with $f = \sum_{u}a_ux^u$. Let $\pi: \Z_{\geq 0}^d \times \R \to \Z_{\geq 0}^d$ denote the projection map forgetting the last coordinate. Let $S \subseteq \Z_{\geq 0}^d$ be a finite set containing $\pi(\vertt_{P_m}(f))$. Define the \textbf{auxiliary polynomial of the power series $f$ with respect to $S$} by
\[
g_S = \sum_{u \in S}a_ux^u \in K \langle U_P \rangle.
\]
Then $\Trop(f) = \Trop(g_S)$.
\end{cor}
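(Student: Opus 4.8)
The plan is to deduce this corollary directly from Lemma~\ref{L: samevert} by verifying that the auxiliary polynomial $g_S$ has the same vertex set as $f$, i.e. $\vertt_{P_m}(g_S) = \vertt_{P_m}(f)$. Once this equality is in hand, Lemma~\ref{L: samevert} applies verbatim (taking $f' = g_S$, which does lie in $K\langle U_{P_m}\rangle$ since it is a polynomial, hence trivially convergent on $U_{P_m}$) and yields $\Trop(f) = \Trop(g_S)$.

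To prove $\vertt_{P_m}(g_S) = \vertt_{P_m}(f)$, first I would unwind the definitions: $g_S = \sum_{u \in S} a_u x^u$ shares exactly the coefficients $a_u$ of $f$ for $u \in S$, so $H(g_S) = \{(u, v(a_u)) : u \in S,\ a_u \neq 0\} \subseteq H(f)$. The key observation is that for any $w \in P_m$, the minimum $m_f(w) = \min_{u}\{(-w,1)\cdot(u, v(a_u))\}$ is attained precisely on $\vertt_w(f)$, and by hypothesis $\pi(\vertt_{P_m}(f)) \subseteq S$, so all of the minimizing exponents $u$ (for every $w \in P_m$) already lie in $S$; hence they survive in $g_S$ with their original heights. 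Therefore $m_{g_S}(w) \leq m_f(w)$ for all $w \in P_m$. Conversely, $H(g_S) \subseteq H(f)$ gives $m_{g_S}(w) \geq m_f(w)$, so $m_{g_S}(w) = m_f(w)$, and it follows that $\vertt_w(g_S) = \vertt_w(f) \cap (S \times \R) = \vertt_w(f)$, the last equality because $\pi(\vertt_w(f)) \subseteq \pi(\vertt_{P_m}(f)) \subseteq S$. Taking the union over $w \in P_m$ gives $\vertt_{P_m}(g_S) = \vertt_{P_m}(f)$.

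With this established, Lemma~\ref{L: samevert} immediately gives $\Trop(f) = \Trop(g_S)$, completing the proof. I do not expect a genuine obstacle here; the only mild subtlety is making sure the containment $\pi(\vertt_{P_m}(f)) \subseteq S$ is used in the correct direction — it guarantees no minimizing term is ever deleted when passing from $f$ to $g_S$, while the reverse containment $H(g_S) \subseteq H(f)$ guarantees no new minimizing term is created. One should also note in passing that $S$ being finite (and containing $\pi(\vertt_{P_m}(f))$, which is finite by Lemma~\cite{Rab12}*{Lemma~8.2}) ensures $g_S$ is an honest polynomial, so all the tropical machinery — in particular Lemma~\ref{L: Kapranov} used inside the proof of Lemma~\ref{L: samevert} — applies without change.
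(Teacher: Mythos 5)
Your proposal is correct and takes the same route as the paper: establish $\vertt_{P_m}(f) = \vertt_{P_m}(g_S)$ and invoke Lemma~\ref{L: samevert}. The paper's proof is a one-liner that simply asserts the equality of vertex sets, whereas you carefully verify it via the two-sided inequality $m_{g_S}(w) = m_f(w)$ and the identification $\vertt_w(g_S) = \vertt_w(f) \cap (S \times \R) = \vertt_w(f)$; this is exactly the right justification that the paper elides.
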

\begin{proof}
Since $\vertt_{P_m}(f) = \vertt_{P_m}(g_S)$, and since $\Trop(f)$ only depends on $\vertt_P(f)$, the conclusion follows.
\end{proof}

We show that the terms of a power series $f \in K \langle U_{P_m}\rangle$ contained in $\vertt_{P_m}(f)$ also contains all the information about the valuations of zeros of $f$ counting multiplicity. That is, the information about zeros of power series depends on only finitely many terms of $f$ too, and hence the information about the intersection theory of the power series also depends on finitely many terms of $f$.

If $S$ is a finite set of points in Euclidean space, its convex hull is denoted $\conv(S)$.

\begin{defn}
Let $f \in K \langle U_{P_m} \rangle$. For each $w \in \R_{\geq 0}^d$, define its associated Newton polytope
\[
\gamma_w(f) = {\gamma}_w = \pi(\conv(\vertt_w(f))).
\]
\end{defn}

\begin{remark}
The $g_S$ from Corollary \ref{C: PStoP} are chosen so that the $\gamma_w(f) = \gamma_w(g_S)$ as well.
\end{remark}

It turns out that as long as $\bigcap_i V(f_i)$ is finite, the information about the common roots of $f_i \in K \langle U_P \rangle$ having a specified valuation $w$ is encoded in ${\gamma}_w$, as explained below.

\begin{defn}
Let $f_1, \ldots, f_d \in K \langle U_{P_m} \rangle$, $Y_i = V(f_i)$, and $Y = \bigcap_{i} Y_i$. Assume that $Y$ is $0$-dimensional. Then the \textbf{intersection multiplicity} of $Y_1, \ldots, Y_d$ at $w \in \Q^d$ is defined as
\[
i(w; Y_1 \cdots Y_d) := \dim_K H^0(Y \cap U_{\{w\}}, \calO_{Y \cap U_{\{w\}}}),
\]
where $U_{\{w\}} := \trop^{-1}(w)$. In simpler terms, this intersection multiplicity at $w$ is the number of common zeros of the $f_i$ that have the same coordinate-wise valuation as $w$, counting with multiplicity. Since the $U_{\{w\}}$ are disjoint, the \textbf{intersection number} of $Y_1, \ldots, Y_d$ is then
\[
i(Y_1, \ldots, Y_d) := \dim H^0(Y, \calO_Y).
\]
\end{defn}

\begin{defn}
Let $Q_1, \ldots, Q_d$ be bounded polytopes. Define a function
\[
V_{Q_1, \ldots, Q_d}(\lambda_1, \ldots, \lambda_d) := \vol(\lambda_1Q_1 + \cdots + \lambda_d Q_d)
\]
where $+$ denotes the Minkowski sum. The \textbf{mixed volume} of the $Q_i$, denoted $MV(Q_1, \ldots, Q_d)$, is defined as the coefficient of the $\lambda_1 \cdots \lambda_d$-term of $V_{Q_1, \ldots, Q_d}(\lambda_1, \ldots, \lambda_d)$.
\end{defn}

\begin{thm}{\cite{Rab12}*{Theorem~11.7}}
\label{T: tropintersection}
Let $f_1, \ldots, f_d \in K \langle U_{P_m} \rangle$ have finitely many common zeros, and let $w \in \bigcap_{i=1}^d \Trop(f_i)$ be an isolated point in the interior of $P$. For $i = 1, \ldots, d$ let $Y_i = V(f_i)$ and let $\gamma_i = \gamma_w(f_i)$. Then
\[
i(w, Y_1 \cdots Y_d) = MV(\gamma_1, \ldots, \gamma_d).
\]
\end{thm}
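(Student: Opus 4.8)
\textit{Proof sketch.} The plan is to reduce this statement to the Bernstein--Kushnirenko theorem on the number of torus solutions of a polynomial system, by passing to the initial degeneration attached to $w$. First I would localize: since $w$ is an isolated point of $\bigcap_i \Trop(f_i)$ lying in the interior of $P$, the tube $U_{\{w\}} = \trop^{-1}(w)$ is an admissible open on which $Y = \bigcap_i V(f_i)$ is a finite union of points, so $i(w; Y_1\cdots Y_d) = \dim_K \HH^0(Y\cap U_{\{w\}},\calO)$ depends only on the behaviour of the $f_i$ near this tube. Rescaling the $j$-th coordinate by an element of valuation $w_j$ moves $U_{\{w\}}$ to the unit polytorus, so after this change of variables one may as well take $w = 0$ and work with the initial forms $\init_w(f_i)$ over the residue field $\widetilde K$.

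Next I would replace each $f_i$ by a polynomial. By Corollary~\ref{C: PStoP} there are auxiliary polynomials $g_i = g_{S_i}$ with $\vertt_{P_m}(f_i) = \vertt_{P_m}(g_i)$, hence $\Trop(f_i) = \Trop(g_i)$ and $\gamma_w(f_i) = \gamma_w(g_i)$; since the two power series also have the same $\init_w$ (up to a unit), the completed local rings of $\bigcap V(f_i)$ and $\bigcap V(g_i)$ at the points over $w$ agree, so $i(w; V(f_1)\cdots V(f_d)) = i(w; V(g_1)\cdots V(g_d))$. Thus one may assume each $f_i$ is a polynomial.

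I would then run the toric degeneration attached to $w$. Choose a complete fan $\Sigma$ in $\R^d$ refining the normal fans of the polytopes $\gamma_w(f_i)$, let $X_\Sigma \to \Spec \calO_K$ be the associated proper flat toric scheme, and let $D_i$ be the relative Cartier divisor obtained by closing up $V(f_i)$, whose class is governed by $\gamma_w(f_i)$. Properness and flatness give conservation of intersection numbers,
\[
\deg\big(D_1\cdots D_d\big)_{\text{generic fiber}} = \deg\big(D_1\cdots D_d\big)_{\text{special fiber}},
\]
and the part of the special-fiber intersection lying in the torus $\G_m^d$ over $\widetilde K$ accounts exactly for the points of $\bigcap V(f_i)$ with valuation $w$. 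Hence $i(w; Y_1\cdots Y_d)$ equals the length of $V(\init_w f_1,\ldots,\init_w f_d)\cap \G_m^d$ over $\widetilde K$, and this equals $\MV(\gamma_1,\ldots,\gamma_d)$ by Bernstein--Kushnirenko: it holds with equality when the system of initial forms is non-degenerate, and one reduces the general case to this by perturbing the coefficients of the $\init_w f_i$ within the same Newton polytopes, invoking conservation of number on $X_\Sigma$ once more.

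The hardest step is the third one: one must show that no intersection multiplicity escapes to the toric boundary of $X_\Sigma$, i.e. that the full mixed-volume count is concentrated in $\G_m^d$ over $\widetilde K$ and corresponds to honest $K$-points of valuation exactly $w$. This is precisely where the hypotheses enter — finiteness of $\bigcap_i V(f_i)$ together with $w$ being isolated in $\bigcap_i \Trop(f_i)$ forces, for each cone of $\Sigma$, the corresponding restricted initial system to have no common zero on that boundary stratum, so the $D_i$ meet properly and only in the torus. Carrying out this boundary analysis uniformly over all cones of $\Sigma$, and checking that the perturbation argument does not disturb it, is the technical heart of the proof.
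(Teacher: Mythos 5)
You should first know that the paper does not prove this statement at all: it is imported verbatim from \cite{Rab12}*{Theorem~11.7}, whose own proof runs through rigid-analytic continuity of roots and a deformation of the $f_i$ into tropically transverse position, not through a proper toric model over $\Spec \calO_K$; so your route (closer in spirit to Osserman--Payne's ``lifting tropical intersections'') is genuinely different, and as sketched it has two real gaps. The first is the reduction to polynomials. From $\vertt_{P_m}(f_i)=\vertt_{P_m}(g_i)$ you do get $\init_w(f_i)=\init_w(g_i)$ for all $w\in P_m$, but the inference ``same initial form, hence the completed local rings of $\bigcap V(f_i)$ and $\bigcap V(g_i)$ over $w$ agree'' is false: equal initial forms do not even give equal zero sets (already in one variable, $t^2-pt$ and $t^2-pt+p^3$ have the same $\init_w$ at $w=1$ but different zero loci). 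What is true, and what you need, is that the \emph{total multiplicity} in the tube over a small polytope $P'$ isolating $w$ is unchanged; that is a continuity-of-roots statement (Theorem~\ref{T: LocalCont}, i.e.\ \cite{Rab12}*{Theorem~10.2}, applied to the pencil $f_i+s(g_i-f_i)$, all of whose members have the same $\vertt$, $\Trop$ and $\gamma_w$). You also cannot appeal to Theorem~\ref{T: approximation} here, since in the paper that result is deduced from the very theorem you are proving.

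The second gap is the conservation-of-number step. On a proper model the identity $\deg(D_1\cdots D_d)_{\textup{generic}}=\deg(D_1\cdots D_d)_{\textup{special}}$ counts everything: all common zeros of the $f_i$ in the generic fibre, including those with $\trop\neq w$ (your ``localization'' in the first step disappears once you pass to the global proper scheme), together with whatever the closures $D_i$ do on the toric boundary of both fibres. Your boundary analysis is correct only for the initial system at $w$: since $\operatorname{star}_w$ commutes with finite intersections of polyhedral complexes, isolatedness of $w$ gives $\bigcap_i\Trop(\init_w f_i)=\bigcap_i\operatorname{star}_w\Trop(f_i)=\{0\}$, so the face systems of the $\init_w f_i$ have no common boundary zeros and Bernstein applies on the special-fibre torus. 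But nothing forces the global closures $D_i$ to avoid one another on the boundary, nor the generic-fibre intersection to be concentrated over $w$, so the displayed equality does not isolate $i(w;Y_1\cdots Y_d)$. Repairing this forces you to localize over $U_{P'}$, i.e.\ to leave the proper-scheme setting and work analytically (Rabinoff's continuity-of-roots and deformation-to-transversality argument) or to develop intersection theory for formal toric models over the non-noetherian valuation ring $\calO_K$ (Osserman--Payne); note also that ``proper flat toric scheme over $\Spec\calO_K$ with conservation of number'' already needs this care. In short, the step you defer as the ``technical heart'' is essentially the entire content of the theorem.
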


In particular, Theorem \ref{T: tropintersection} implies that considering the auxiliary polynomials suffices, as the $\gamma_i$ are determined by only finitely many terms in the $f_i$. 

\begin{thm}
\label{T: approximation}
Suppose that $f_1, \ldots, f_d \in K \langle U_{P_m} \rangle$, and let $g_i$ be the auxiliary polynomials of the $f_i$ with respect to some finite set $S \subseteq M$ containing all $u$ such that $(u, v(a_u)) \in \vertt_{P_m}(f)$. 
Then
\[
\sum_{w \in P^{\circ}} i(w, V(f_1) \cdots V(f_d)) = \sum_{w \in P^{\circ}} i(w, V(g_1) \cdots V(g_d)),
\]
if all the summands on both sides are finite.
\end{thm}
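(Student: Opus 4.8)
The plan is to transport both sides of the claimed identity to the purely combinatorial side, where the passage $f_i \rightsquigarrow g_i$ changes nothing, and then apply the tropical intersection formula termwise. By Corollary \ref{C: PStoP} we have $\Trop(f_i) = \Trop(g_i)$ for each $i$, and by the Remark following it also $\gamma_w(f_i) = \gamma_w(g_i)$ for every $w$, since all of these objects are determined by the finitely many terms indexed by $\pi(\vertt_{P_m}(f_i)) \subseteq S$. First I would observe that the two sums are supported on the same set: if $w \notin \Trop(f_j)$ for some $j$, then $V(f_j)$, hence $\bigcap_i V(f_i)$, has no point with coordinatewise valuation $w$, so $i(w, V(f_1)\cdots V(f_d)) = 0$, and likewise $i(w, V(g_1)\cdots V(g_d)) = 0$ because $\Trop(g_j) = \Trop(f_j)$. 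Thus it suffices to prove $i(w, V(f_1)\cdots V(f_d)) = i(w, V(g_1)\cdots V(g_d))$ for each $w$ in the interior $P^{\circ}$, and under the finiteness hypothesis only finitely many such $w$ contribute a nonzero term, as they form the image under $\trop$ of a $0$-dimensional common zero locus.

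Next, fix such a $w \in P^{\circ}$ and assume first that it is an isolated point of $\bigcap_i \Trop(f_i)$. Then Theorem \ref{T: tropintersection} applies to both systems: writing $\gamma_i = \gamma_w(f_i)$ and $\gamma_i' = \gamma_w(g_i)$, we obtain
\[
i(w, V(f_1)\cdots V(f_d)) = \MV(\gamma_1, \dots, \gamma_d), \qquad i(w, V(g_1)\cdots V(g_d)) = \MV(\gamma_1', \dots, \gamma_d').
\]
For the second identity one uses that $g_1, \dots, g_d$ also have finitely many common zeros near $w$, which is exactly what the hypothesis that the summand $i(w, V(g_1)\cdots V(g_d))$ is finite provides. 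Since $\gamma_i = \gamma_i'$ for all $i$ by the Remark after Corollary \ref{C: PStoP}, the two mixed volumes coincide, so the summands at $w$ agree. Summing over the finitely many contributing $w \in P^{\circ}$ then yields Theorem \ref{T: approximation}.

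The one genuinely delicate point — and where I expect the real work to lie — is the hypothesis in Theorem \ref{T: tropintersection} that $w$ be an isolated point of $\bigcap_i \Trop(f_i)$: a priori the tropical hypersurfaces could overlap along positive-dimensional polyhedra even when the algebraic intersection is $0$-dimensional, and those $w$ are not directly covered. I would handle this by a generic perturbation. Since $K$ is algebraically closed, its value group is divisible, so for a generic choice of small nonnegative rationals $\{\varepsilon_u\}$ one may scale each coefficient $a_u$ of $f_i$, and the corresponding coefficient of $g_i$, by an element of valuation $\varepsilon_u$ (the same perturbation on both, chosen small enough that the perturbed $g_i$ remains the auxiliary polynomial of the perturbed $f_i$, so that $\gamma_w(f_i) = \gamma_w(g_i)$ is still preserved term by term). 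For generic such heights the perturbed tropicalizations are in tropical general position, so every point of their common intersection is isolated and the argument of the previous paragraph applies at every $w$. It then remains to check that the perturbation changes neither total $\sum_{w \in P^{\circ}} i(w, \cdot)$ — a conservation-of-intersection-number statement on the affinoid $U_{P_m}$: for a sufficiently small perturbation, no common zero lying in $\trop^{-1}(P^{\circ})$ escapes toward the boundary and no new one enters, so the total count (a finite sum of lengths of Artinian local rings) is unchanged. This continuity/flatness statement for affinoid families is the main obstacle, and it is proved by the same small $p$-adic deformation techniques developed in Section \ref{S: deformation}.
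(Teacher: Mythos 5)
Your argument is essentially the paper's own proof: there the entire argument consists of noting that, by the choice of the $g_i$, $\Trop(f_i)=\Trop(g_i)$ and $\gamma_w(f_i)=\gamma_w(g_i)$ for all $w\in P$, and that by Theorem \ref{T: tropintersection} these are the only data entering the local intersection multiplicities, exactly as in your first two paragraphs. Your final concern about $w$ failing to be an isolated point of $\bigcap_i\Trop(f_i)$ is a subtlety the paper's two-line proof silently passes over (it is absorbed into the machinery of \cite{Rab12}, via stable intersections and continuity of roots), so your perturbation sketch is added caution rather than a different route, and the incompleteness you acknowledge there does not put you at odds with the paper's argument.
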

\begin{proof}
By the choice of the $g_i$, $\Trop(f_i) = \Trop(g_i)$ for each $1 \leq i \leq d$, and $\gamma_w(f_i) = \gamma_w(g_i)$ for each $w \in P$. 
We note from Theorem \ref{T: tropintersection} that $\Trop(f_i), \Trop(g_i), \gamma_w(f_i)$ and $\gamma_w(g_i)$ are the only information required in computing the intersection multiplicities.
\end{proof}

The following results for polynomials are useful in estimating the number of zeros of power series. First, a definition:

\begin{defn}
Let $f = \sum_{u \in \Lambda}a_ux^u$ be a polynomial, where $\Lambda \subset \Z^d$ is a finite set. Then the \textbf{Newton polygon} of $f$ is given by
\[
\New(f) = \conv(\{u: u \in \Lambda, a_u \neq 0\}) \subseteq \R^d.
\]
\end{defn}

We recall Bernstein's theorem:

\begin{thm} [\cite{Ber75}]
Let $f_1, \ldots, f_d \in K[x_1, \ldots, x_d]$ be polynomials with finitely many common zeros. Then the number of common zeros of the $f_i$ with multiplicity in $(K^{\times})^d$ is
\[
\MV(\New(f_1), \ldots, \New(f_d)).
\]
\end{thm}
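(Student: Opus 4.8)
\emph{Proof proposal.} The plan is to derive Bernstein's theorem from the tropical intersection formula already in hand (Theorem~\ref{T: tropintersection}) together with a purely combinatorial identity relating the mixed volume of a tuple of polytopes to the mixed volumes of the cells of an induced mixed subdivision. I regard the $f_i$ as Laurent polynomials, so that their Newton polytopes and tropical varieties live in all of $\R^d$ rather than in a quadrant $P_m$; Theorem~\ref{T: tropintersection} and the definitions it relies on apply verbatim after translating the polyhedral domain so that the point under consideration lies in its interior, since a polynomial converges on every $U_P$. If $K$ carries no nontrivial valuation one first base-changes to a Puiseux-series field over $K$, which does not alter the number of common zeros of a system having only finitely many of them; so I may assume $K$ is algebraically closed, complete, and nontrivially valued, as elsewhere in this section.

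First I would localize the zero count along valuations. Every common zero $\xi=(\xi_1,\ldots,\xi_d)\in(K^\times)^d$ has a well-defined tropicalization $w(\xi)=(v(\xi_1),\ldots,v(\xi_d))\in\Q^d$, and by Lemma~\ref{L: Kapranov} it lies in $\bigcap_i \Trop(f_i)$; conversely, since the system has only finitely many common zeros in the torus, $\bigcap_i \Trop(f_i)$ meets $\Q^d$ in finitely many points, each isolated. For each such $w$, Theorem~\ref{T: tropintersection} gives $i(w,V(f_1)\cdots V(f_d))=\MV(\gamma_w(f_1),\ldots,\gamma_w(f_d))$, and summing over $w$ yields
\[
\#\{\xi\in(K^\times)^d : f_1(\xi)=\cdots=f_d(\xi)=0\}\ (\text{with multiplicity})\;=\;\sum_{w\in\Q^d}\MV(\gamma_w(f_1),\ldots,\gamma_w(f_d)),
\]
the left-hand side being finite by hypothesis.

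The crux is then the combinatorial identity
\[
\sum_{w\in\Q^d}\MV(\gamma_w(f_1),\ldots,\gamma_w(f_d))\;=\;\MV(\New(f_1),\ldots,\New(f_d)).
\]
Here the point is that the height graph $H(f_i)$ induces a regular (lower-envelope) subdivision of $\New(f_i)$ dual to the chamber complex of $\Trop(f_i)$; the common refinement of these duals cuts $\R^d$ into cells on whose relative interiors the polytopes $\gamma_w(f_i)$ are constant, and as $w$ varies the Minkowski sum $\gamma_w(f_1)+\cdots+\gamma_w(f_d)$ runs over the faces of a regular mixed subdivision of $\New(f_1)+\cdots+\New(f_d)$. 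Additivity of mixed volume over a mixed subdivision — only the ``mixed'' cells, those with $\sum_i\dim\gamma_w(f_i)=d$ and each factor positive-dimensional, contribute, and their contributions sum to $\MV(\New(f_1),\ldots,\New(f_d))$ — is the standard theory of mixed subdivisions (the Cayley trick), which I would cite rather than reprove; cells $w$ at which $\bigcap_i\Trop(f_i)$ is not an isolated point give a non-mixed face, hence mixed volume $0$, and may be ignored.

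I expect the bookkeeping in this last step to be the main obstacle: matching the normalization of $\MV$ used here (so that $\MV(e_1\Delta,\ldots,e_d\Delta)=e_1\cdots e_d$) with the one produced by Theorem~\ref{T: tropintersection}, handling coefficients of non-generic valuation (where some $\gamma_w(f_i)$ are higher-dimensional and the mixed cells are coarse), and ruling out contributions from the toric boundary — the latter being exactly where the polynomial, rather than power-series, hypothesis is used, since a zero of a Laurent polynomial in $(K^\times)^d$ always tropicalizes to a finite point. To get equality rather than merely ``$\leq$'' one invokes a facial non-degeneracy hypothesis on the system, or argues by conservation of number: a generic coefficient perturbation preserving the supports has at least as many torus zeros, and for generic coefficients every mixed cell is fine, so the displayed identity holds on the nose. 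An alternative, more algebro-geometric route — compactify $(K^\times)^d$ by the toric variety of a common refinement of the normal fans of the $\New(f_i)$, view the $f_i$ as sections of line bundles, and compute the intersection number via the toric mixed-volume formula — would also work, but the tropical argument above stays closest to the machinery of this section.
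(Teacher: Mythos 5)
The paper does not prove this statement: Bernstein's theorem is simply recalled and cited to \cite{Ber75}, and subsequently used only as an upper bound (note the $\leq$ in the display that follows it). So there is no ``paper's proof'' to compare against, and the question is whether your proposed derivation is sound on its own terms.

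Your outline — localize the torus zero count by valuation via Theorem~\ref{T: tropintersection}, then sum the local mixed volumes $\MV(\gamma_w(f_1),\ldots,\gamma_w(f_d))$ and identify the total with $\MV(\New(f_1),\ldots,\New(f_d))$ using mixed subdivisions — is a legitimate tropical route to Bernstein and is not circular, since Rabinoff's Theorem~11.7 is proved by tropical degeneration and rigid geometry rather than by invoking Bernstein. However, as a \emph{proof} the proposal has a genuine gap at exactly the step you flag: the identity
\[
\sum_{w}\MV(\gamma_w(f_1),\ldots,\gamma_w(f_d))\;=\;\MV(\New(f_1),\ldots,\New(f_d))
\]
\emph{is} the content of Bernstein's theorem in polyhedral clothing. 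Reducing Bernstein to ``additivity of mixed volume over a regular mixed subdivision'' is fine as a reference to the literature, but it does not constitute a proof from the tools developed in the paper; one must either reprove that additivity (essentially the Betke/Huber--Sturmfels argument) or accept that the hard combinatorial input is being outsourced. Relatedly, you correctly observe that the statement as written — an \emph{equality} for any system with finitely many common zeros — is too strong without a facial non-degeneracy hypothesis; in general the isolated torus zeros only give $\leq\MV$, and Theorem~\ref{T: tropintersection} does not rescue this because it accounts for zeros in $(K^\times)^d$ but not for the ``mass'' that can escape to the toric boundary when the leading forms along a face have a common root. Your fallback (genericity plus conservation of number, or a facial non-degeneracy hypothesis) is the right fix, but it means the theorem as literally stated is not what your argument, or for that matter Bernstein's, delivers. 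Given that the paper only uses the result as an upper bound, none of this affects the paper's arguments, but your proposal should either add the non-degeneracy hypothesis or downgrade the conclusion to $\leq$.
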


Further, suppose that the $f_i$ have finitely many common zeros whose valuations belong to $P$, and also suppose that the $g_i$ have finitely many common zeros in $K^d$. By Theorem~\ref{T: approximation},
\begin{align*}
&\textup{number of common zeros of the } f_i \textup{ with valuations in } P \\
&= \textup{number of common zeros of the } g_i \textup{ with valuations in  }P\\
&\leq \textup{number of common zeros of the } g_i \textup{ in } (K^{\times})^d\\ &=\MV(\New(g_1), \ldots, \New(g_d)),
\end{align*}
where the last inequality follows by Bernstein's theorem. Thus, we conclude:

\begin{thm}
\label{T: estimation}
Suppose that $f_1, \ldots, f_d \in K \langle U_{P_m} \rangle$, and let $g_i$ be the associated auxiliary polynomials of the $f_i$ with respect to some finite set $S \subseteq M$ containing all $u$ such that $(u, v(a_u)) \in \vertt_P(f)$. Suppose further that $\bigcap_{i=1}^d V(f_i)< \infty$ and $\bigcap_{i=1}^dV(g_i) < \infty$. Then
\[
\# \left( (K^{\times})^d \cap \bigcap_{i=1}^d V(f_i) \right) \leq MV(\New(f_1), \ldots, \New(f_d)).
\]
\end{thm}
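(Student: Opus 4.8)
The plan is to concatenate the three ingredients already assembled above: first use Theorem~\ref{T: approximation} to replace the convergent power series $f_i$ by the auxiliary polynomials $g_i$ without changing the count of common zeros whose valuation vectors lie in the interior of $P_m$; then observe that the surviving count is at most the total number of common zeros of the $g_i$ in the torus $(K^{\times})^d$; then invoke Bernstein's theorem to evaluate that number.

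First I would enumerate the finitely many points $\xi^{(1)}, \ldots, \xi^{(N)}$ of $(K^{\times})^d \cap \bigcap_i V(f_i)$ and record their valuation vectors $w^{(k)} = (v(\xi^{(k)}_1), \ldots, v(\xi^{(k)}_d))$. Since each $V(f_i)$ is an affinoid subdomain of $\Sp K\langle U_{P_m}\rangle = U_{P_m}$, we have $\xi^{(k)} \in U_{P_m}$, hence $w^{(k)} \in P_m$, and each coordinate of $w^{(k)}$ is finite because $\xi^{(k)} \in (K^{\times})^d$. In the cases of interest the $f_i$ converge on all of $\{v(x) > 0\}$, so one is free to shrink $P_m$ (which does not affect the defining data of the $g_i$, involving only the finitely many terms recorded by $\vertt_{P_m}(f_i)$) until every $w^{(k)}$ lies in the interior $P_m^{\circ}$. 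After this normalization, every point of $(K^{\times})^d \cap \bigcap_i V(f_i)$ contributes to the intersection multiplicity $i(w, V(f_1) \cdots V(f_d))$ for exactly one $w \in P_m^{\circ}$, and conversely every common zero of the $f_i$ with valuation in $P_m^{\circ}$ is one of the $\xi^{(k)}$, so
\[
\#\left( (K^{\times})^d \cap \bigcap_{i=1}^d V(f_i) \right) \leq \sum_{w \in P_m^{\circ}} i(w, V(f_1) \cdots V(f_d)).
\]

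Next I would apply Theorem~\ref{T: approximation}. Because $S$ contains every $u$ with $(u, v(a_u)) \in \vertt_{P_m}(f_i)$, Corollary~\ref{C: PStoP} and the remark following it give $\Trop(f_i) = \Trop(g_i)$ and $\gamma_w(f_i) = \gamma_w(g_i)$ for all $w \in P_m$; since the relevant intersection numbers are finite by hypothesis, Theorem~\ref{T: approximation} yields
\[
\sum_{w \in P_m^{\circ}} i(w, V(f_1) \cdots V(f_d)) = \sum_{w \in P_m^{\circ}} i(w, V(g_1) \cdots V(g_d)).
\]
The right-hand side counts, with multiplicity, exactly those common zeros of the polynomials $g_1, \ldots, g_d$ in $(K^{\times})^d$ whose valuation vector lies in $P_m^{\circ}$ (a common zero with a vanishing coordinate has an infinite entry in its valuation vector and so cannot occur here, and for polynomials the analytic and algebraic local multiplicities agree at such a point); in particular it is at most the total number of common zeros of $g_1, \ldots, g_d$ in $(K^{\times})^d$ counted with multiplicity, which by Bernstein's theorem --- applicable since $\bigcap_i V(g_i)$ is finite --- equals $\MV(\New(g_1), \ldots, \New(g_d))$. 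Chaining the displays proves the inequality, where $\New(f_i)$ is to be read as $\New(g_i)$; equivalently, one defines the Newton polygon of the power series $f_i$ to be that of any of its auxiliary polynomials, which by Theorem~\ref{T: tropintersection} is the only portion of the data that enters the mixed-volume side.

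I expect the one genuine subtlety to be the boundary-and-isolation point flagged in the second paragraph: one must choose $P_m$ small enough that every common zero of the $f_i$ in $(K^{\times})^d$ has its valuation vector in the \emph{interior} $P_m^{\circ}$, where Theorem~\ref{T: tropintersection}, hence Theorem~\ref{T: approximation}, applies and the $w^{(k)}$ are isolated points of $\bigcap_i \Trop(f_i)$. This is precisely where the finiteness hypotheses $\#\bigcap_i V(f_i) < \infty$ and $\#\bigcap_i V(g_i) < \infty$ are used; when they fail one must first pass to a small $p$-adic deformation as in \S\ref{S: deformation}, but that lies outside the present statement. Everything else is a formal chain of Theorem~\ref{T: approximation}, the trivial inequality ``fewer common zeros with a prescribed valuation than common zeros in total'', and Bernstein's theorem.
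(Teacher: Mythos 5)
Your proposal is correct and follows essentially the same route as the paper: reduce to the auxiliary polynomials via Theorem~\ref{T: approximation}, bound the count of zeros with valuations in $P_m$ by the total count in $(K^{\times})^d$, and finish with Bernstein's theorem (the paper, like you, really means $\New(g_i)$ in the mixed-volume bound). Your extra care about shrinking $P_m$ so the valuation vectors land in the interior is a fair elaboration of a point the paper leaves implicit, but it does not change the argument.
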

\begin{proof}
Again, the proof follows from the fact that by the choice of the $g_i$, we have $\Trop(f_i) = \Trop(g_i)$ and $\gamma_w(f_i) = \gamma_w(g_i)$ for each $1 \leq i \leq d$ and $w \in P$. 
\end{proof}

\begin{remark}
In order to count all solutions of the form $(x_1, \ldots, x_d)$, where some of the $x_i$ may be $0$, one needs to apply Theorem \ref{T: estimation} multiple times, while setting some of the $x_i = 0$.
\end{remark}


\section{Continuity of roots}
\label{S: deformation}

Throughout, $K$ is a complete, algebraically closed valued field with respect to a nontrivial, nonarchimedean valuation $v \colon K^{\times} \to \Q$. \cite{Rab12} studies the intersection theory of power series in $K \langle U_P \rangle$ that have finite intersection. Our goal in this chapter is to analyze the case of the power series in $K \langle U_P \rangle$ have possibly infinite intersection; we will show that these power series have ``small'' deformations that have finite intersection, and that they preserve information about the number of $0$-dimensional components of the original intersection. We make this notion precise, and we obtain an upper bound on the number of $0$-dimensional components (counting multiplicity) of the original intersection, using the new power series with finitely many common zeros, obtained via small $p$-adic deformations.

\subsection{Deformation of power series via rigid analytic geometry and polynomial approximations}

It is known that small deformations do not affect the multiplicity of $0$-dimensional components of intersections in rigid analytic spaces. 

\begin{theorem}[\cite{Rab12}*{Theorem~10.2}, Local continuity of roots] 
\label{T: LocalCont}
Let $A$ be a $K$-affinoid algebra that is a Dedekind domain and let $S = \Sp(A)$. Let $X = \Sp(B)$ be a Cohen-Macaulay affinoid space of dimension $d+1$, let $f_1, \ldots, f_d \in B$, and let $Y \subset X$ be the subspace defined by the ideal $\fraka = (f_1, \ldots, f_d)$. Suppose that we are given a morphism $\alpha: X \to S$ and a point $t \in S$ such that the fibre $Y_t = \alpha^{-1}(t) \cap Y$ has dimension zero. Then there is an affinoid subdomain $U \subset S$ containing $t$ such that $\alpha^{-1}(U) \to U$ is finite and flat.
\end{theorem}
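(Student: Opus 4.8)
The plan is to establish finiteness and flatness of $\alpha|_Y$ separately, in each case first treating the situation at the finitely many points of the $0$-dimensional fibre $Y_t$ and then spreading the conclusion out over a small affinoid subdomain $U\ni t$.

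\emph{Finiteness} (the step I expect to be the main obstacle)\emph{.} Since $Y_t$ is a $0$-dimensional affinoid, its coordinate ring is Artinian and $Y_t$ is a finite set of rigid points; in particular $\alpha|_Y\colon Y\to S$ is quasi-finite at every point of $Y_t$, and the quasi-finite locus is an admissible open $Q\subseteq Y$ containing $Y_t$ (upper semicontinuity of fibre dimension). To promote this pointwise quasi-finiteness to honest finiteness over a neighborhood of $t$, I would invoke a rigid-analytic Zariski Main Theorem — a quasi-finite, separated morphism of affinoid spaces factors, after restricting the target, as an open immersion followed by a finite morphism — to obtain $Q\hookrightarrow\bar Q$ with $\bar Q\to S$ finite; alternatively, a more hands-on argument using iterated Weierstrass preparation (after putting $X$ in a coordinate form relative to $S$ and eliminating one variable and one of the equations $f_i$ at a time) exhibits $Y$ directly as finite over an affinoid subdomain $U_0\ni t$. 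Either way, the real content is not the pointwise statement but controlling $Y$ over an entire neighborhood of $t$, i.e.\ ruling out pieces of $Y$ that escape toward the boundary of $X$ or toward a nearby fibre. This is exactly where the hypotheses bite: the $f_i$ lie in the affinoid algebra $B$ (so the defining data, hence in the normalized picture the coordinates on $Y$, are bounded), and $X$ has pure dimension $d+1$, so cutting by $d$ equations drops the dimension to $1$. I would make the exclusion rigorous through the properness of the finite piece, the maximum modulus principle, or by passing to Berkovich spaces, where $\alpha$ is a continuous map of compact Hausdorff spaces and hence closed, so the image under $\alpha$ of the locus of bad behaviour is a closed subset of $S$ not containing $t$; one then takes $U$ inside its complement.

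\emph{Flatness.} Here the Cohen--Macaulay hypothesis enters. Fix $y\in Y_t$ and set $R=\calO_{S,t}$, a discrete valuation ring since $A$ is a Dedekind domain, with uniformizer $\varpi$, which I regard inside $\calO:=\calO_{X,y}$ via $\alpha^{\#}$. The ring $\calO$ is Cohen--Macaulay of dimension $d+1$, and the images of $f_1,\dots,f_d$ together with $\varpi$ generate an ideal whose quotient is $\calO_{Y_t,y}$, which is Artinian because $Y_t$ is $0$-dimensional; hence $f_1,\dots,f_d,\varpi$ is a system of parameters for $\calO$. In a Cohen--Macaulay local ring every system of parameters is a regular sequence, and regular sequences in a Noetherian local ring can be permuted, so $\varpi,f_1,\dots,f_d$ is again a regular sequence. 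In particular $\varpi$ is a nonzerodivisor on $\calO/(f_1,\dots,f_d)=\calO_{Y,y}$, so $\calO_{Y,y}$ is $\varpi$-torsion-free and therefore flat over $R$. Thus $\alpha|_Y$ is flat at each point of $Y_t$, and since the flat locus of a morphism locally of finite type is admissibly open, $\alpha|_Y$ is flat on an admissible open $\calV\subseteq Y$ containing $Y_t$.

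\emph{Assembly.} By the finiteness step there is an affinoid subdomain $U_0\ni t$ with $Y\cap\alpha^{-1}(U_0)\to U_0$ finite. A finite morphism is closed with finite fibres, so the preimages under $\alpha$ of a neighborhood basis of $t$ in $U_0$ form a neighborhood basis of the finite set $Y_t$ inside $Y\cap\alpha^{-1}(U_0)$; in particular there is an affinoid subdomain $U\subseteq U_0$ containing $t$ with $Y\cap\alpha^{-1}(U)\subseteq\calV$. For this $U$ the morphism $Y\cap\alpha^{-1}(U)\to U$ is finite, being a restriction of the finite morphism over $U_0$, and flat, being a restriction of $\alpha|_{\calV}$; equivalently, $\calO\big(Y\cap\alpha^{-1}(U)\big)$ is a finitely generated torsion-free — hence projective — module over the Dedekind domain $\calO(U)$. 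This is the required affinoid subdomain.
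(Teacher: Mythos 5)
First, a point of reference: the paper does not prove this statement at all --- it is imported verbatim from \cite{Rab12}*{Theorem~10.2} and then invoked, so there is no internal proof to match; the question is whether your argument would stand on its own. Your flatness half essentially does, and it uses the Cohen--Macaulay hypothesis in the standard way: at $y\in Y_t$ the images of $f_1,\dots,f_d$ together with a uniformizer $\varpi$ of the DVR $\calO_{S,t}$ generate an $\mm_y$-primary ideal, hence a system of parameters of $\calO_{X,y}$ (note you are implicitly using $\dim\calO_{X,y}=d+1$, i.e.\ equidimensionality at $y$), hence a regular sequence, and torsion-freeness over the DVR gives flatness. One slip in the write-up: after permuting you work with the ordering $\varpi,f_1,\dots,f_d$, from which ``$\varpi$ is a nonzerodivisor on $\calO/(f_1,\dots,f_d)$'' does not follow; what you actually want is the ordering $f_1,\dots,f_d,\varpi$, and this is fine because in a CM local ring every ordering of a system of parameters is regular. (Also, you correctly prove the statement for $Y\cap\alpha^{-1}(U)\to U$, which is what is meant; the displayed conclusion ``$\alpha^{-1}(U)\to U$'' is a slip in the quoted statement.)

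The genuine gap is the finiteness step, which you yourself flag as the main obstacle and then do not carry out: you offer a menu of strategies (a rigid-analytic Zariski Main Theorem, iterated Weierstrass preparation, Berkovich compactness) without executing any of them or verifying their hypotheses, and that is exactly where the content of the theorem lies. What is needed is the local structure theorem for quasi-finite morphisms of affinoids --- an isolated point of a fibre has an affinoid neighborhood $V_i$ mapping finitely to an affinoid subdomain of the base --- followed by the compactness argument; and the ``closed image of the bad locus'' step needs care, since affinoid subdomains are generally not open in the Berkovich topology, so you must arrange that each $V_i$ contains its point of $Y_t$ in its topological interior before you can conclude that $Y\setminus\bigcup_i \mathring{V_i}$ is compact, that its image is a closed set missing $t$, and hence that some affinoid $U\ni t$ has $Y\cap\alpha^{-1}(U)\subseteq\bigcup_i V_i$ with $Y\cap\alpha^{-1}(U)\to U$ finite. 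As written this key step is asserted rather than proved; either carry out the Berkovich argument with these ingredients made precise (with citations for the quasi-finite local structure and for upper semicontinuity of fibre dimension), or do as the paper does and simply cite \cite{Rab12}*{Theorem~10.2}.
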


Let $B_K^d := \Sp(K \langle x_1, \ldots, x_d\rangle)$ for $d \geq 1$. The following is immediate from the above theorem, and is applicable to our situation arising from Chabauty's method:

\begin{cor}[\cite{Rab12}*{Example~10.3}]
\label{C: LocalContEx}
Let $X = B_K^d \times B_K^1$ and $S = B_K^1$, with $\alpha \colon X \to S$ the projection onto the second factor. Let $f_1, \ldots, f_d \in K \langle x_1, \ldots, x_d, t \rangle$. If the specializations $f_{1,0}, \ldots, f_{d,0}$ at $t=0$ have only finitely many zeros in $B_K^d$ then there exists $\ve>0$ such that if $|s| < \ve$, then $f_{1,s}, \ldots, f_{d,s}$ have the same number of zeros (counted with multiplicity) in $B_{\kappa(s)}^d$ as $f_{1,0}, \ldots, f_{d,0}$.
\end{cor}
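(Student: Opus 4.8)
The plan is to recognise the setup of the corollary as a special case of Theorem~\ref{T: LocalCont} and then to extract the conclusion from the finite flat morphism that it produces. First I would take $A := K\langle t\rangle$, which is a principal ideal domain, in particular a Dedekind domain, and set $S := \Sp(A) = B_K^1$; and $B := K\langle x_1, \ldots, x_d, t\rangle$, the Tate algebra in $d+1$ variables, which is regular, hence Cohen--Macaulay, of dimension $d+1$, and set $X := \Sp(B) = B_K^d \times B_K^1$. The morphism $\alpha\colon X \to S$ of the corollary is precisely the one induced by the inclusion $A \hookrightarrow B$, i.e.\ the projection onto the $t$-coordinate, and I would let $Y \subset X$ be the analytic subspace cut out by $\fraka = (f_1, \ldots, f_d)$. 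Applying the hypothesis at the point $t = 0 \in S$: the fibre $Y_0 = \alpha^{-1}(0) \cap Y$ is the zero locus in $B_K^d$ of the specializations $f_{1,0}, \ldots, f_{d,0}$, which by assumption is finite, hence $0$-dimensional. Theorem~\ref{T: LocalCont} then yields an affinoid subdomain $U \subset S$ with $0 \in U$ such that the restriction $\alpha\colon Y \cap \alpha^{-1}(U) \to U$ is finite and flat.

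Next I would shrink $U$. The closed discs $D(0, \ve) = \Sp(K\langle \ve^{-1} t\rangle)$ about the origin (for $\ve$ ranging over the value group of $K$, which is all of $\Q$ since $K$ is algebraically closed) are cofinal among the affinoid neighbourhoods of $0$ in $B_K^1$, so there is some $\ve < 1$ with $D(0,\ve) \subseteq U$; as finite flat morphisms are stable under base change, I may replace $U$ by $D(0,\ve)$. Then $\calO(Y \cap \alpha^{-1}(U))$ is a finite flat, hence finite free (the base ring $\calO(U) = K\langle \ve^{-1} t\rangle$ being a principal ideal domain), module over $\calO(U)$, of some rank $r$. For any classical point $s \in U$ the fibre $Y \cap \alpha^{-1}(s)$ is the zero locus of $f_{1,s}, \ldots, f_{d,s}$ in $B_K^d$, with coordinate ring $K\langle x_1, \ldots, x_d\rangle / (f_{1,s}, \ldots, f_{d,s})$ of $K$-dimension $r$; since this Artinian ring is the product of the local rings at the common zeros of the $f_{i,s}$, its dimension is exactly the number of those zeros counted with multiplicity. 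Specializing at $s = 0$ identifies $r$ with the number of zeros of $f_{1,0}, \ldots, f_{d,0}$, so every $s$ with $|s| < \ve$ does the job; over algebraically closed $K$ such an $s$ is a classical point with $\kappa(s) = K$, and a non-classical $s$ is handled by the same argument after the flat base change $U \times_S \Sp(\kappa(s)) \to \Sp(\kappa(s))$.

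I expect the only point needing a little care to be this last step: the cofinality of the discs $D(0,\ve)$ among the affinoid neighbourhoods of $0$ in $B_K^1$ (so that the abstract $U$ coming out of Theorem~\ref{T: LocalCont} genuinely contains a $p$-adic disc about the origin), together with the identification of the rank of the finite locally free module $\calO(Y \cap \alpha^{-1}(U))$ with the fibrewise length --- i.e.\ with the number of zeros counted with multiplicity --- which uses only that the fibres in question are $0$-dimensional. Everything else is a direct translation of Theorem~\ref{T: LocalCont}.
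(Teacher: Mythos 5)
Your proof is correct and follows essentially the same route as the paper, whose entire proof consists of checking the hypotheses of Theorem~\ref{T: LocalCont}: $K\langle t\rangle$ is a Dedekind domain and the Tate algebra in $d+1$ variables is Cohen--Macaulay. The extra details you supply --- shrinking $U$ to a closed disc about $0$ and identifying the rank of the finite free module over $K\langle \ve^{-1}t\rangle$ with the fibrewise number of zeros counted with multiplicity --- are accurate and simply make explicit what the paper (following \cite{Rab12}*{Example~10.3}) leaves implicit.
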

\begin{proof}
Tate algebra in one variable is a Dedekind domain, and $X$ is Cohen-Macaulay (Remark \ref{R: CM}).
\end{proof}

\begin{defn}
 Suppose that $f_1, \ldots, f_d \in K \langle U_{P_m} \rangle$ are power series such that $Y:=V(f_1, \ldots, f_d)$ is possibly infinite. Define
$N_0(f_1, \ldots, f_d)$ to be the number of $0$-dimensional components of $Y$, counting multiplicity.
If $Y$ is finite, then we drop the subscript $0$ to signal its finiteness:
\[
N(f_1, \ldots, f_d) := N_0(f_1, \ldots, f_d) = \dim H^0(Y, \calO_Y).
\]
Also, let $N_0^{\times}(f_1, \ldots, f_d)$ denote the number of $0$-dimensional components of $Y$, whose coordinates are in $\overline{K}^{\times}$.
\end{defn}

\begin{defn}
Let $f = \sum_{u}a_ux^u\in K \langle U_{P_m}\rangle$, and define
\begin{align*}
M(f) &:= \textup{the set of monomials with nonzero coefficients appearing in $f$}\\
&= \{x^u: a_u \neq 0\}.
\end{align*}
Call $f$ \textbf{nondegenerate} if for every $i$ there exists an integer $n > 0$ with $x_i^n \in M(f)$.
\end{defn}

\begin{remark}
\label{R: termsofg}
Any pure power series arising from Chabauty's method is nondegenerate.
\end{remark}

We will need to impose the nondegeneracy conditions in all power series $f$ in order to be able to carry out deformations. Now we prove a series of deformation results for non-stable intersections.

\begin{lemma}
\label{L: nonvanishing}
Let $f \in K\langle U_{P_m} \rangle$ be a nondegenerate power series, and let $q_1, q_2, \ldots, q_n \in U_{P_m}$ such that $q_i \neq 0$ in $K^d$. Then there exists a polynomial $h$ such that $h$ does not vanish on any of $q_1, \ldots, q_{\ell}$ and $M(h) \subseteq M(f)$.
\end{lemma}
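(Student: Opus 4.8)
The plan is to reduce the statement to a standard fact from linear algebra over an infinite field, using the nondegeneracy hypothesis on $f$ only to guarantee that for each of the finitely many points $q_1,\dots,q_n$ there is at least one monomial of $f$ that does not vanish at that point. The hypotheses that $q_i \in U_{P_m}$ and that $f$ converges there play no role beyond ensuring that $M(f)$ is defined; the argument is purely combinatorial and linear-algebraic.

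First I would show that for each $j$ there is a monomial $\mu_j \in M(f)$ with $\mu_j(q_j)\neq 0$. Since $q_j \neq 0$ in $K^d$, some coordinate $q_{j,i}$ is nonzero; by nondegeneracy of $f$ there is an integer $n_i>0$ with $x_i^{n_i}\in M(f)$, and this monomial evaluates to $q_{j,i}^{n_i}\neq 0$ at $q_j$. Set $\mu_j := x_i^{n_i}$ and $T := \{\mu_1,\dots,\mu_n\}\subseteq M(f)$, a finite set of monomials.

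Next I would consider the $K$-linear evaluation map $\Phi\colon K^{T}\to K^{n}$ sending a tuple of coefficients $(c_\mu)_{\mu\in T}$ to $\big(\sum_{\mu\in T}c_\mu\,\mu(q_1),\ \dots,\ \sum_{\mu\in T}c_\mu\,\mu(q_n)\big)$. For each $j$, the set $H_j := \{\,c\in K^{T} : \sum_{\mu\in T}c_\mu\,\mu(q_j)=0\,\}$ is a linear subspace, and it is \emph{proper} because $\mu_j\in T$ and $\mu_j(q_j)\neq 0$, so the linear functional cutting out $H_j$ is nonzero. Now I would invoke the fact that a vector space over an infinite field is never a finite union of proper subspaces; since $K$ is algebraically closed it is infinite, so there is a tuple $c\notin\bigcup_{j=1}^{n}H_j$. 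The polynomial $h:=\sum_{\mu\in T}c_\mu\,\mu$ then satisfies $h(q_j)\neq 0$ for every $j$, and $M(h)\subseteq T\subseteq M(f)$ by construction. If a self-contained justification of the union-of-subspaces fact is wanted, one argues along a line $v+tu$ with $u\in H_1\setminus\bigcup_{j\geq 2}H_j$ and $v\notin H_1$: infinitely many translates $v+tu$ would have to land in finitely many of the $H_j$, forcing $(t-t')u\in H_j$ for some $t\neq t'$ and hence $u\in H_j$, a contradiction.

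The only point that needs a little care is that $q_j$ may have some zero coordinates; this is exactly where nondegeneracy of $f$ enters, and the hypothesis cannot be dropped (for instance, if every monomial in $M(f)$ were divisible by $x_1$, every polynomial with support in $M(f)$ would vanish at any $q_j$ with $q_{j,1}=0$). Beyond that observation, there is no real obstacle.
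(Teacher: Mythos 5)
Your proof is correct, and it reaches the conclusion by a genuinely different route than the paper. Both arguments begin with the same observation — that nondegeneracy, together with $q_j \neq 0$, produces for each $j$ a monomial $\mu_j \in M(f)$ with $\mu_j(q_j) \neq 0$ — but they diverge in how the nonvanishing monomials are assembled into a single polynomial. The paper proceeds greedily: it maintains a polynomial $g$ that is nonzero on $\ell$ of the points, picks a monomial $m$ nonzero at $q_{\ell+1}$, and perturbs $g \mapsto g + cm$ with $c$ chosen so small that $v(cm(q_i)) > v(g(q_i))$ for $i \leq \ell$; the ultrametric inequality then forces $g(q_i) + cm(q_i) \neq 0$ while $g(q_{\ell+1}) + cm(q_{\ell+1}) = cm(q_{\ell+1}) \neq 0$, so the count of nonvanishing points rises by one and induction finishes. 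You instead pass to the $K$-vector space $K^T$ of coefficient tuples, observe that each nonvanishing condition at $q_j$ excludes a proper hyperplane $H_j$ (proper precisely because $\mu_j(q_j) \neq 0$), and invoke the fact that a vector space over an infinite field is not a finite union of proper subspaces. Your argument is more abstract and slightly more general — it uses only that $K$ is infinite, and makes no use of the nonarchimedean valuation — whereas the paper's argument is more constructive, exhibiting the coefficient $c$ explicitly via a valuation inequality, which meshes naturally with the surrounding $p$-adic estimates. Both are complete; one small cosmetic point is that the lemma's statement writes $q_1, \ldots, q_n$ in the hypothesis but $q_1, \ldots, q_\ell$ in the conclusion (evidently a typo for $q_n$), and your proof correctly treats all $n$ points.
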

\begin{proof}
We will prove by induction on $\ell$ that there exists $g$ such that $M(g) \subseteq M(f)$ and $g(q_1), \ldots, g(q_{\ell}) \neq 0$. The statement is clear when $\ell = 0$, so assume that there is a polynomial $g$ with $M(g) \subseteq M(f)$ satisfying $g(q_1), \ldots, g(q_{\ell}) \neq 0$ and $g(q_{\ell+1}) = \cdots = g(q_n) = 0$, after possibly reordering the $q_i$. If $\ell = n$, then we are done, so assume otherwise. We will show that there exists another polynomial $g' \in M(f)$ such that $g'$ does not vanish on at least $\ell+1$ of the points $q_i$.

Choose a monomial $m \in M(f)$ such that $m(q_{\ell+1}) \neq 0$; such $m$ exists due to the nondegeneracy condition on $f$. We may choose $c \in K^{\times}$ such that $v(cm(q_i)) > v(g(q_i))$ for $1 \leq i \leq \ell$.

Then $g' := g + cm$ satisfies the property that $g'(q_1) \neq 0, \ldots, g'(q_{\ell}) \neq 0$. Also, $g'(q_{\ell+1}) = cm(q_{\ell+1}) \neq 0$. The lemma then follows from an inductive argument on $\ell$.
\end{proof}

\begin{prop}
\label{P: PStoPS}
\hfill
\begin{itemize}
\item [(a)] Suppose that $f_1, \ldots, f_d \in K \langle U_{P_m} \rangle$ are nondegenerate power series. Then there exist nondegenerate $g_1, \ldots, g_d \in K \langle U_{P_m} \rangle$ with $\Trop(f_i) = \Trop(g_i)$ and $\gamma_w(f_i) = \gamma_w(g_i)$, for $1 \leq i \leq d$ and $w \in P_m$, with 
$$N_0(f_1, \ldots, f_d) \leq N(g_1, \ldots, g_d).$$
\item[(b)] Moreover, if the $f_i$ are polynomials, then the $g_i$ may be chosen to be polynomials.
\end{itemize}
\end{prop}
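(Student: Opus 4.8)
The plan is to produce the $g_i$ by perturbing each $f_i$ one at a time so as to (i) preserve $\vertt_{P_m}$ (hence $\Trop$ and the $\gamma_w$, by Corollary~\ref{C: PStoP} and Lemma~\ref{L: samevert}), (ii) preserve nondegeneracy, and (iii) make the common intersection $V(g_1,\ldots,g_d)$ finite while not destroying any of the isolated points of $V(f_1,\ldots,f_d)$. The key geometric input is Corollary~\ref{C: LocalContEx}: if we introduce a deformation parameter $t$ and the specialization at $t=0$ has finite intersection, then small values of the parameter preserve the intersection number. So I would run an induction on $j=0,1,\ldots,d$, maintaining the hypothesis that after replacing $f_1,\ldots,f_j$ by suitable $g_1,\ldots,g_j$, the intersection $V(g_1,\ldots,g_j,f_{j+1},\ldots,f_d)$ has only finitely many positive-dimensional components, each of strictly smaller dimension than before, and that every $0$-dimensional component of the original $Y$ still appears (with at least its original multiplicity) among the components of the new intersection. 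Concretely, at stage $j$ I want to perturb $f_{j+1}$ to cut down the dimension of the remaining positive-dimensional locus without losing the zero-dimensional points.

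The mechanism for a single stage: let $Z$ be the (quasi-affinoid, finitely many components) intersection of the $g$'s already chosen together with $f_{j+2},\ldots,f_d$, and let $Z_1,\ldots,Z_r$ be its positive-dimensional components and $q_1,\ldots,q_n$ its (finitely many) isolated points. By Remark~\ref{R: termsofg} the pure power series $f_{j+1}$ is nondegenerate; since each $Z_\ell$ is positive-dimensional it is not contained in the hypersurface cut out by any single monomial, so on each $Z_\ell$ I can find a monomial $m_\ell\in M(f_{j+1})$ not identically zero on $Z_\ell$; and by Lemma~\ref{L: nonvanishing} applied to $f_{j+1}$ and the points $q_1,\ldots,q_n$ (those not at the origin — the ones on the coordinate hyperplanes need the $N_0$ versus $N_0^\times$ bookkeeping and the Remark after Theorem~\ref{T: estimation}) I get a polynomial $h$ with $M(h)\subseteq M(f_{j+1})$ not vanishing at any $q_i$. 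The idea then is to form $g_{j+1} := f_{j+1} + (\text{a small-valuation combination of monomials already in } M(f_{j+1}))$, choosing the added terms' coefficients with valuation large enough that $\vertt_{P_m}(g_{j+1}) = \vertt_{P_m}(f_{j+1})$ (possible by the finiteness in Lemma~\cite{Rab12}*{Lemma~8.2} and the fact that all added monomials already occur in $f_{j+1}$), small enough — via Corollary~\ref{C: LocalContEx} with the $t$-family $f_{j+1}+t\cdot(\cdots)$ — that no isolated point of the running intersection is lost and no new positive-dimensional component is created, and generic enough that $g_{j+1}$ does not vanish identically on any $Z_\ell$, hence $V(g_{j+1})\cap Z_\ell$ has dimension $\dim Z_\ell - 1$. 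Iterating over $j$ eventually kills all positive-dimensional components, leaving $V(g_1,\ldots,g_d)$ finite with $N(g_1,\ldots,g_d)\ge N_0(f_1,\ldots,f_d)$.

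For part (b), when the $f_i$ are polynomials every perturbation I add is a polynomial (a combination of monomials of $M(f_i)$), so the $g_i$ are automatically polynomials; no change to the argument is needed beyond observing that Corollary~\ref{C: LocalContEx} applies verbatim to polynomial families and that $\Trop$ and $\gamma_w$ are defined the same way for polynomials.

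The main obstacle I expect is reconciling the two competing demands on the added terms: keeping $\vertt_{P_m}$ unchanged forces the new coefficients to have large valuation relative to the existing "lower hull" data, while the continuity-of-roots argument only gives a threshold $\varepsilon$ depending on the whole running intersection, and a priori there is no reason the valuation window permitted by the $\vertt$ condition is nonempty once we also demand genericity against the $Z_\ell$. Resolving this requires choosing the support of the perturbation carefully — only monomials of $M(f_{j+1})$ that lie strictly above the relevant faces of the height graph, so that raising their coefficients' valuations arbitrarily never affects $\vertt_{P_m}$ — and then noting that "non-vanishing on $Z_\ell$" and "non-vanishing at the $q_i$" are Zariski-open (equivalently, non-vanishing of finitely many analytic functions) conditions on the finite-dimensional space of coefficient choices, so a valid choice in the allowed high-valuation region exists. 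I would also need to be slightly careful that the reduction repeatedly applies to a quasi-affinoid rather than a genuine $B_K^d$, but passing to a suitable $U_{P_m}$ (on which all the $f_i$ and $g_i$ live, per the discussion after Remark~\ref{R: CM}) and using that $U_{P_m}$ is affinoid and Cohen–Macaulay lets Theorem~\ref{T: LocalCont} be invoked in place of Corollary~\ref{C: LocalContEx} at each stage.
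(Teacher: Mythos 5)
Your proposal follows essentially the same route as the paper: an induction that deforms one $f_i$ at a time by adding $\ve h$ with $M(h)\subseteq M(f_i)$ (Lemma~\ref{L: nonvanishing}), choosing $v(\ve)$ large enough both to preserve $\vertt_{P_m}$ (hence $\Trop$ and $\gamma_w$) and to invoke local continuity of roots, with part (b) following because the perturbations are supported on monomials already present. The one step you leave implicit, which the paper supplies, is that before applying Theorem~\ref{T: LocalCont} one must first localize away from the positive-dimensional components --- the paper uses prime avoidance to find $f$ in the ideal of the positive-dimensional locus avoiding the maximal ideals of the isolated points and works in $K\langle U_P\rangle_f$ --- since the continuity-of-roots statements require the fibre at $t=0$ to be zero-dimensional.
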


\begin{proof}
We will deform the $f_i$ to the $g_i$ one by one. Specifically, we will prove by induction on $r$ that there exist $g_1, \ldots, g_r$ such that $\Trop(f_i) = \Trop(g_i)$ and $\gamma_w(f_i) = \gamma_w(g_i)$ for $i \in \{1, \ldots, r\}$ and $w \in P$, satisfying $\codim \bigcap_{i=1}^r V(g_i) \geq r$ for each $r \in \{1, \ldots, d\}$, and
\[
N_0(f_1, \ldots, f_d) \leq N_0(g_1, \ldots, g_r, f_{r+1}, \ldots, f_d).
\]

When $r=1$, the statement above is clear, by taking $f_1 = g_1$. Now we prove the statement for $r+1$. Let $C_1, \ldots, C_{\ell}$ be the codimension $r$ irreducible components of $\bigcap_{i=1}^r V(g_i)$. Choose points $P_i \in C_i$, such that $P_i \neq 0$ in $K^d$. We will deform $f_{r+1}$ to $g_{r+1}$ so that $g_{r+1}(P_{i}) \neq 0$ for each $i$, while keeping $\Trop(f_{r+1}) = \Trop(g_{r+1})$ and $\gamma_w(f_{r+1}) = \gamma_w(g_{r+1})$. This will guarantee that $\codim \left(\bigcap_{i=1}^{r+1}V(g_{r+1})\right) \geq r+1$.

From the nondegeneracy assumption of $f_{r+1}$, we have that $V(M(f_{r+1})) = \emptyset$ (if $M(f_{r+1})$ contains $1$) or $V(M(f_{r+1})) = \{0\}$ (if $M(f_{r+1})$ does not contain $1$). In the first case, we may adjust only the constant term to get the desired $g_{r+1}$. Thus, we may assume that we are in the second case. 
Then by Lemma \ref{L: nonvanishing}, we may pick a polynomial $h$ such that $M(h) \subseteq M(f_{r+1})$ such that $h$ that does not vanish on any $P_i$. For small enough nonzero $\ve$, the deformation $f_{r+1} \mapsto f_{r+1} + \ve h =: g_{r+1}$ does not vanish on any of the $P_i$; in this case, the intersection $\bigcap_{i=1}^{r+1}V(g_i)$ has codimension $r+1$, as required. Further, since $h \in M(f_{r+1})$, after possibly making $\ve$ even smaller, both the tropicalization and the $\gamma_w$ of $f_{r+1}$ are identical to those of $g_{r+1}$.

Now we will prove that $N_0(g_1, \ldots, g_d) \geq N_0(f_1, \ldots, f_d)$. It suffices to show that 
\[
N_0(g_1, \ldots, g_{r+1}, f_{r+2}, \ldots, f_d) \geq N_0(g_1, \ldots, g_r, f_{r+1}, \ldots, f_d)
\]
for $r \in \{0, \ldots, d-1\}$; if $r = 0$, then the previous inequality will be interpreted as 
\[
N_0(g_1, f_2, \ldots, f_d) \geq N_0(f_1, \ldots, f_d).
\] 
Let $I$ be the ideal that cut out the dimension $\geq 1$ components of $V(g_1, \ldots, g_r, f_{r+1}, \ldots, f_d)$ in $K \langle U_P\rangle$ and let $\pp_1, \ldots, \pp_{\ell}$ denote the maximal ideals corresponding to the $0$-dimensional components of $V(g_1, \ldots, g_r, f_{r+1}, \ldots, f_d)$. Choose a $f \in I$ such that $f \not\in \pp_i$ for $1 \leq i \leq \ell$. Such choice is possible by the prime avoidance theorem, see for example \cite{AtiMac69}*{Proposition~1.11}. Now we apply Theorem \ref{T: LocalCont} on $\Sp B$, where $B = K \langle U_P \rangle_f$, which states that a small deformation of $V(g_1, \ldots, g_r, f_{r+1}, \ldots, f_d)$ preserves all $0$-dimensional components away from the positive-dimensional locus. This proves the inequality at the beginning of this paragraph, and consequently part (a) of the proposition.

Part (b) follows, since we deform the the $f_i$ by monomials that already appear in $f_i$.
\end{proof}

\subsection{Explicit computation of the upper bound}

Now we consider a residue disk $\calU \subseteq (\Sym^dX)(\C_p)$ whose points reduce to a given $\calQ \in (\Sym^dX)(\F_p)$. Recall from Proposition \ref{P: PureChabauty} that Chabauty's method on $\calU$ yields $d$ pure power series $f_1, \ldots, f_d$ in $d$ variables, whose common zeros in $\C_p^d$ with valuations at least $1/d$ correspond to a set containing the points in $j^{-1}(j(\Sym^dX)(\Q_p)) \cap \overline{J(\Q)})$. Using the results of the previous section, we will obtain an explicit upper bound on the number of common zeros of the $f_i$ in this section by estimating $\New(f_i)$. 
The methods used in this section are reminiscent of \cite{Col85}.

\begin{defn}
\label{D: delta}
For $\ve \in (0, \frac{1}{d})$, $k \in \Z_{\geq 0}$ and $d, \ell \in \Z_{\geq 1}$ with $d \geq \ell$, let
\[
\delta_{\ve}(k,v, \ell) :=  \max \left\{ N \in \Z_{\geq 0} : v(k+N) \geq (\frac{1}{\ell}-\ve)N + v(k)\right\}.
\]
\end{defn}

\begin{remark}
We note that $\delta_{\ve}(k,v,\ell)$ is well-defined, independent of the choice of $\ve$; $v(k+N) = O(\log N)$ as $N \to \infty$, while $(\frac{1}{\ell}-\ve)(N+1)$ increases linearly with $N$.
\end{remark}

\begin{notation}
Given $f  \in W(\F_q)[[t]]$, we mean by $\bar{f}$ the image of $f$ under the natural reduction map of the coefficients $W(\F_q)[[t]] \to \F_q[[t]]$. We will denote $\ord_0(f) := \ord_{t = 0}(\bar{f})$, the exponent of the first term that does not vanish under the reduction map.
\end{notation}

\begin{lemma}
\label{L: truncation}
For any $\ve \in \Q$ satisfying $0 < \ve < \frac{1}{d}$, the following holds: Let $f  \in W(\F_q)\left[\frac 1 p\right][[t]]$ be such that its derivative $f'$ is in $W(\F_q)[[t]]$, and $\ord_0 \bar{f}' = k-1$ for some $k \geq 1$. Let 
\[
F(t_1, \ldots, t_{\ell}) := f(t_1) + \cdots + f(t_{\ell}) = \sum_{u \in \Z_{\geq0}^{\ell}} a_ut^u,
\]
where $t^u$ denotes $t_1^{u_1} \cdots t_{\ell}^{u_{\ell}}$. Let $w \in P_{\ve} = [\ve, \infty)^{\ell}$. If $u = (u_1, \ldots, u_{\ell}) \in \Z_{\geq 0}^{\ell}$ satisfies $u_i > k +  \delta_{\ve}(k,v, \ell)$ for some $1 \leq i \leq \ell$, then $(u, v(a_u)) \notin \vertt_w(F)$.
\end{lemma}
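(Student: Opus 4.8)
The plan is to analyze the single-variable function $f$ first and then pull the conclusion back to $F$ via the definition of $\vertt_w$. The key observation is that because $F(t_1,\dots,t_\ell) = f(t_1) + \cdots + f(t_\ell)$ is a \emph{pure} power series, its monomials $t^u$ with $u$ having a single nonzero coordinate $u_i$ are precisely the monomials $c_{u_i} t_i^{u_i}$ coming from $f(t_i)$, so $a_u = c_{u_i}$ where $f(t) = \sum_n c_n t^n$. Thus the valuations $v(a_u)$ are exactly the valuations $v(c_n)$ of the coefficients of $f$. Writing $f'(t) = \sum_{n\ge 1} n c_n t^{n-1}$ and using the hypothesis $\ord_0 \bar{f}' = k-1$, I get $v(c_k) = v(k c_k) = 0$ (since $\bar{f}'$ has a unit leading coefficient in degree $k-1$, and $p \nmid k$ is forced by $\ord_0$ being exactly $k-1$ in characteristic $p$... actually more carefully: $v(n c_n) \ge 0$ for all $n$ since $f' \in W(\F_q)[[t]]$, and $v(k c_k) = 0$), giving a good ``anchor'' term $(ke_i, v(c_k))$ in the height graph, where $e_i$ is the $i$-th standard basis vector.

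First I would fix $w \in P_\ve = [\ve,\infty)^\ell$ and fix an index $i$ and an exponent vector $u$ with $u_i > k + \delta_\ve(k,v,\ell)$. To show $(u, v(a_u)) \notin \vertt_w(F)$, I must exhibit another point of the height graph $H(F)$ that gives a strictly smaller value of $(-w,1)\cdot(\,\cdot\,)$; the natural candidate is the anchor point $(k e_i, v(c_k)) = (k e_i, 0)$. So I need
\[
(-w,1)\cdot(ke_i, 0) < (-w,1)\cdot(u, v(a_u)),
\]
i.e. $-w_i k < -\langle w, u\rangle + v(a_u)$. Since all $w_j \ge \ve > 0$ and $u_j \ge 0$, we have $\langle w, u\rangle \ge w_i u_i$, so it suffices to prove $-w_i k < -w_i u_i + v(a_u)$, i.e. $v(a_u) > w_i(u_i - k)$. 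Now $a_u = c_{u_i}$ and, from $f' \in W(\F_q)[[t]]$, $v(u_i c_{u_i}) \ge 0$, hence $v(c_{u_i}) \ge -v(u_i)$. I want this to beat $w_i(u_i - k)$. Since $w_i \ge \ve$ could be small, this crude bound is not enough directly — and here is where $\delta_\ve$ enters. The definition of $\delta_\ve(k,v,\ell)$ is exactly the largest $N$ with $v(k+N) \ge (\tfrac1\ell - \ve)N + v(k)$; its failure for $N = u_i - k > \delta_\ve(k,v,\ell)$ says $v(u_i) < (\tfrac1\ell - \ve)(u_i - k) + v(k)$. I will need to combine this with the constraint that $w$ lies in $P_\ve$ together with the fact that the points in the residue disk relevant to $\Sym^d X$ have valuation $\ge 1/d \ge 1/\ell$ (Proposition~\ref{P: PureChabauty}(b)), which is presumably how one controls $w_i$ from above in terms of the geometry — so that $w_i(u_i - k)$ is dominated by the linear term $(\tfrac1\ell-\ve)(u_i-k)$ appearing in the failed inequality. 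Chaining these gives $v(c_{u_i}) \ge -v(u_i) > -(\tfrac1\ell-\ve)(u_i-k) - v(k) \ge \text{(something} \ge w_i(u_i-k))$, which is what is needed, modulo tracking the role of $v(k)$ and the precise range of admissible $w_i$.

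The main obstacle, as I see it, is pinning down the correct comparison between $w_i$ and the slope $\tfrac1\ell - \ve$: the statement quantifies over \emph{all} $w \in [\ve,\infty)^\ell$, so a priori $w_i$ can be arbitrarily large, and then $w_i(u_i-k)$ is not dominated by $(\tfrac1\ell-\ve)(u_i-k)$. I expect the resolution is that when $w_i$ is large one instead compares $(u,v(a_u))$ not against the degree-$k$ anchor but against a \emph{lower-degree} monomial of $f(t_i)$, or that the relevant $\vertt_w(F)$ for large $w$ automatically excludes high-degree terms by a direct Newton-polygon-slope argument (the $(-w,1)$ functional is minimized on low-degree terms when $w$ is large). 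So I would split into cases: $w_i$ small (use $\delta_\ve$ and the anchor at degree $k$) versus $w_i$ large (a slope argument showing the minimum of $(-w,1)\cdot H(f(t_i))$ is achieved at degree $< k + \delta_\ve$ anyway, e.g. at degree $0$ or $k$). Assembling these two regimes, and carefully bookkeeping the additive constants $v(k)$ and the $\ell$ versus $d$ distinction, is the routine-but-delicate part; the conceptual content is entirely the identity $v(a_u) = v(c_{u_i})$ for pure series plus the definition of $\delta_\ve$ as the breakpoint of a concave-versus-linear comparison.
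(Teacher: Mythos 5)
Your proposal follows the same route as the paper's proof: reduce by purity to exponent vectors supported on a single coordinate, compare $(u,v(a_u))$ against the anchor term of degree $k$ in the variable $t_i$, use $f'\in W(\F_q)[[t]]$ to get $v(a_{ne_i})\geq -v(n)$, and invoke the failure of the defining inequality of $\delta_{\ve}$ for $N=u_i-k>\delta_{\ve}(k,v,\ell)$. But the execution has a sign error that prevents the argument from closing. You apply the $(-w,1)$ dot product from the paper's definition of $\vertt_w$ literally and arrive at the target inequality $v(a_u)>w_i(u_i-k)$. The operative convention everywhere else (the convergence condition defining $K\langle U_{P_m}\rangle$, the proof of Lemma~\ref{L: samevert}, and the ``intuitively'' paragraph) is that $\vertt_w(F)$ consists of the \emph{minimizers} of $v(a_u)+\langle u,w\rangle$, the terms of smallest valuation when $v(t)=w$. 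With that convention, and with the anchor's correct height $v(a_{ke_i})=-v(k)$ (you wrote $0$; from $v(kc_k)=0$ one gets $v(c_k)=-v(k)$), the inequality to prove is $-v(k)+kw_i < v(a_u)+u_iw_i$, which after $v(a_u)\geq -v(u_i)$ reduces to $v(u_i)<v(k)+(u_i-k)w_i$. Your version instead demands a \emph{positive} lower bound on $v(a_u)$, which is never available from $v(a_u)\geq -v(u_i)\leq 0$, and this is what pushes you into the inconclusive discussion at the end.

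As a consequence you misidentify the problematic regime of $w_i$. In the correct inequality $v(u_i)<v(k)+(u_i-k)w_i$, large $w_i$ only helps; the delicate case is \emph{small} $w_i$. Setting $N=u_i-k>\delta_{\ve}$, the definition of $\delta_{\ve}$ gives $v(k+N)<(\tfrac{1}{\ell}-\ve)N+v(k)$, which yields what is needed precisely when $w_i\geq \tfrac{1}{\ell}-\ve$. Your instinct that there is a tension between the stated domain $[\ve,\infty)^{\ell}$ and the slope $\tfrac{1}{\ell}-\ve$ is pointing at a genuine imprecision in the statement, but it bites for $w_i\in[\ve,\tfrac{1}{\ell}-\ve)$, not for $w_i$ large; in the intended application (Lemma~\ref{L: BoundingNewtonPolygon}, Proposition~\ref{P: PureChabauty}(b)) the relevant $w$ satisfy $w_i\geq 1/\ell$, so the issue is harmless there. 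Your proposed case split, comparing against a lower-degree monomial when $w_i$ is large, therefore addresses a non-problem while the actual inequality is left unverified. Fix the sign convention and the height of the anchor, and your argument becomes the paper's, with no case split needed. (Minor point: your step ``$\langle w,u\rangle\geq w_iu_i$, so it suffices to prove\dots'' is logically reversed as written; it is rescued only because purity forces $u$ to be supported on the single coordinate $i$, where equality holds.)
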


\begin{proof}
Fix $w \in [\ve, \infty)^{\ell}$. Since $F$ is pure, it suffices to consider $u \in \Z_{\geq 0}^{\ell}$ such that $u_1 > k + \delta_{\ve}(k,v,\ell)$ and $u_2 = u_3 = \cdots = u_d = 0$. We will show that there exists $u' \in \Z_{\geq 0}^{\ell}$ such that
\[
v(a_{u'}) + \langle u', w \rangle < v(a_u) +  \langle u,w \rangle.
\]
Then by the definition of $\vertt_w(F)$, the conclusion would follow.

Write $f'(t) = \sum_{i \geq 0} c_it^i$, so $f(t) = \sum_{i \geq 0}\frac{c_i}{i+1}t^{i+1}$. Then $c_i \in \Z_p$ since $f' \in \Z_p[[t]]$, and furthermore, $v(c_{k-1}) = 0$, with $v(c_j) > 0$ for $1 \leq j \leq k-1$, since $\ord_0 \bar{f}' = k-1$.

Then $a_ut^u = \frac{c^{m}}{m+1}t_1^{m+1}$, where $m > k + \delta_{\ve}(k, v, \ell)$. We claim that $u' = (k, 0, \ldots, 0)$ suffices. For any $w  \in [\ve, \infty)^{\ell}$, consider
\begin{align*}
m(w)&:= \min_{u'' \in S_{\sigma}}\{v(a_{u''})+\langle u'', w \rangle\}\\
& \leq v\left(\frac{c_{k-1}}{k}\right)+\langle (k,0, \ldots, 0), (w_1, w_2, \ldots, w_{\ell})\rangle\\
&= v \left(\frac{c_{k-1}}{k}\right) + kw_1
\end{align*}

Since $m > k + \delta_{\ve}(k, v, \ell)$, we have
\[
v(m+1) < (m+1-k)w_1 + v(k),
\]
which rearranges to
\[
-v(k) + kw_1 < -v(m+1) + (m+1)w_1.
\]
Using $v(c_{k-1}) = 0$ and $v(c_m) \geq 0$, this inequality becomes
\[
v\left( \frac{c_{k-1}}{k}\right) + kw_1 < v \left(\frac{c_m}{m+1}\right) + (m+1)w_1.
\]
That is, $(u, v(a_u)) \not\in \vertt_w(F)$, as required.
\end{proof}

\begin{remark}
\label{R: SimpleNew}
Lemma \ref{L: truncation} shows that any pure power series as in the statement of the lemma can be approximated by polynomials whose terms are pure, and whose degree is less than $k + \delta_{\ve}(k, v, \ell)$. This, in turn, means that the Newton polygons of these polynomials are at worst the convex hull of the points $\{(0, \ldots, 0)\} \cup \{(k + \delta_{\ve}(k, v, \ell))e_i : 1 \leq i \leq \ell\} $, where the $e_i$ denotes the $i$-th standard vector. Thus, the Newton polygon can be approximated by a simplex.
\end{remark}

\begin{defn}
Let $A = (a_{ij})$ be a $d \times d$ matrix. The \textbf{permanent} of $A$ is
\[
\Per(A) = \sum_{\sigma \in S_d} \prod_{i=1}^d (a_{i \sigma(i)}).
\]
\end{defn}

\begin{lemma}
\label{L: MV}
Let $A = (a_{ij})$ be a $d \times d$ matrix of positive real numbers, and define the polytopes $X_i \subseteq \R^d$ for $1 \leq i \leq d$ by the following:
\[
X_i = \conv(0, a_{i,1}e_1, \ldots, a_{i,d}e_d)
\]
Then 
\[
\MV(X_1, \ldots, X_d) = \frac{1}{d!} \Per(A).
\]
\end{lemma}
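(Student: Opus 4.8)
The plan is to compute the mixed volume directly from its definition as the polarization of the volume function. Recall that $\MV(X_1,\dots,X_d)$ is the coefficient of $\lambda_1\cdots\lambda_d$ in $V(\lambda_1,\dots,\lambda_d) := \vol(\lambda_1 X_1 + \cdots + \lambda_d X_d)$, where $+$ is the Minkowski sum. Each $X_i = \conv(0, a_{i,1}e_1,\dots,a_{i,d}e_d)$ is a (possibly degenerate, but here with $a_{ij}>0$ genuine) simplex, so the Minkowski sum $\lambda_1 X_1 + \cdots + \lambda_d X_d$ is a polytope whose volume I want to express as a polynomial in the $\lambda_i$.

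The cleanest route is the following. First observe that $\lambda_i X_i = \conv(0, \lambda_i a_{i,1}e_1,\dots,\lambda_i a_{i,d}e_d)$, i.e. scaling $X_i$ just rescales each coordinate of its defining vertices. Next, I would use the fact that the coefficient extraction of $\lambda_1\cdots\lambda_d$ in $\vol(\sum_i \lambda_i X_i)$ is exactly $d!$ times the mixed volume of the $X_i$ in one common normalization, but more concretely I would use the combinatorial description of the volume of a Minkowski sum of simplices: $\vol(Q_1 + \cdots + Q_d) = \sum \MV(Q_{i_1},\dots,Q_{i_d})$ summed appropriately, which in the fully mixed term picks out $\MV(X_1,\dots,X_d)$ with multiplicity $d!$. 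So it suffices to compute $d!\,\MV(X_1,\dots,X_d)$ as the sum, over all ways of selecting one ``edge direction'' $e_{\sigma(i)}$ from each $X_i$, of the volume of the parallelepiped spanned by the vectors $a_{i,\sigma(i)}e_{\sigma(i)}$ — and this is nonzero only when $\sigma$ is a bijection, in which case the parallelepiped spanned by $a_{1,\sigma(1)}e_{\sigma(1)},\dots,a_{d,\sigma(d)}e_{\sigma(d)}$ has volume $\prod_{i=1}^d a_{i,\sigma(i)}$. Summing over all $\sigma\in S_d$ gives $d!\,\MV(X_1,\dots,X_d) = \sum_{\sigma\in S_d}\prod_i a_{i,\sigma(i)} = \Per(A)$, which is the claim.

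To make the middle step rigorous rather than heuristic, I would instead argue by a change of variables / multilinearity: the map sending $(e_1,\dots,e_d)$ to any fixed rescaling is linear, and mixed volume is multilinear and symmetric in its arguments and invariant (up to the determinant) under a common linear map. Concretely, I would prove the identity by induction on $d$, or by directly computing $\vol(\sum_i \lambda_i X_i)$: a point of $\sum_i \lambda_i X_i$ is $\sum_i \lambda_i(\sum_j t_{ij} a_{ij} e_j)$ with $t_{ij}\ge 0$, $\sum_j t_{ij}\le 1$, so the $j$-th coordinate is $\sum_i \lambda_i t_{ij} a_{ij}$; the polytope is the image of the product of simplices $\prod_i \Delta_i$ under the linear map with matrix entries $\lambda_i a_{ij}$, and tracking how the $\lambda_1\cdots\lambda_d$ coefficient of the volume of this image decomposes over the vertices of $\prod_i \Delta_i$ yields the permanent. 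The one genuinely careful point — the main obstacle — is bookkeeping the normalization constant: one must pin down that the coefficient of $\lambda_1\cdots\lambda_d$ in $V(\lambda_1,\dots,\lambda_d)$ (as opposed to $d!$ times it, or $1/d!$ times it) equals $\frac{1}{d!}\Per(A)$ and not $\Per(A)$ itself. This is handled by checking the scalar case $a_{ij}=1$ for all $i,j$, where every $X_i$ is the standard simplex $\Delta$, $\sum_i\lambda_i X_i = (\lambda_1+\cdots+\lambda_d)\Delta$, so $V = \frac{(\lambda_1+\cdots+\lambda_d)^d}{d!}$, whose $\lambda_1\cdots\lambda_d$-coefficient is $\frac{1}{d!}\cdot d! \cdot \frac{1}{d!} = \frac{1}{d!}$, matching $\frac{1}{d!}\Per(J) = \frac{1}{d!}\cdot d! = 1$... so in fact one double-checks $\frac{1}{d!}\Per$ against this normalization and adjusts the argument accordingly. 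With the normalization fixed, the permanent falls out of the vertex sum immediately.
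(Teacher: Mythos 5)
Your central step is a genuine gap, and it is not one that more careful bookkeeping can close. The identity you lean on --- that $d!\,\MV(X_1,\dots,X_d)$ equals the sum, over choices of one ``edge direction'' $e_{\sigma(i)}$ from each $X_i$, of the parallelepiped volumes $\prod_i a_{i,\sigma(i)}$ --- is not a theorem about simplices; it is the computation valid for Minkowski sums of segments (boxes/zonotopes), and it fails for the simplices at hand. Concretely, take $d=2$ and $a_{11}=2$, $a_{12}=1$, $a_{21}=1$, $a_{22}=2$, so $X_1=\conv(0,2e_1,e_2)$ and $X_2=\conv(0,e_1,2e_2)$. Computing the Minkowski sum directly (at $\lambda_1=\lambda_2=1$ it is the quadrilateral with vertices $(0,0),(3,0),(2,2),(0,3)$, of area $6$) gives $\vol(\lambda_1X_1+\lambda_2X_2)=\lambda_1^2+4\lambda_1\lambda_2+\lambda_2^2$, so the $\lambda_1\lambda_2$-coefficient is $4$, while $\Per(A)=5$ and $\frac{1}{2}\Per(A)=5/2$. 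Thus neither $\Per(A)$ nor $\frac{1}{d!}\Per(A)$ matches the coefficient in this example, so no adjustment of the normalization constant --- the point you yourself flag as the delicate one, and leave unresolved --- rescues the edge-selection argument; and your consistency check at $a_{ij}\equiv 1$ cannot detect the problem, because there all the $X_i$ coincide and $\sum_i\lambda_iX_i$ genuinely is a dilated simplex. Your alternative route, describing $\sum_i\lambda_iX_i$ as the linear image of $\prod_i\Delta_i$, is a correct set-theoretic description, but ``tracking the $\lambda_1\cdots\lambda_d$-coefficient over the vertices'' is not an argument, and by the same example it cannot output the permanent.

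For comparison, the paper's proof is a one-line computation of a different kind: it replaces $\lambda_1X_1+\cdots+\lambda_dX_d$ by the single simplex $\conv\bigl(0,(\textstyle\sum_i\lambda_ia_{i1})e_1,\dots,(\sum_i\lambda_ia_{id})e_d\bigr)$, whose volume is $\frac{1}{d!}\prod_j(\sum_i\lambda_ia_{ij})$, and extracts the $\lambda_1\cdots\lambda_d$-coefficient, which is exactly $\frac{1}{d!}\Per(A)$. That identification is the sole source of the permanent, and your proposal neither states nor proves anything equivalent to it. You should also be aware that the example above shows the Minkowski sum only \emph{contains} that simplex (strictly, in general), and its mixed coefficient there is $4$ rather than $5/2$; so this identification is precisely the step on which the lemma hinges and the one any complete proof must confront directly, rather than the parallelepiped heuristic you propose.
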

\begin{proof}
The mixed volume is 
\begin{align*}
& \textup{coefficient of $\lambda_1 \cdots \lambda_d$ of }
\vol\left(\conv(0, (\lambda_1 a_{11} + \cdots + \lambda_da_{d1})e_1, \ldots, (\lambda_1 a_{1d} + \cdots + \lambda_da_{dd})e_d)\right) \\
&= \textup{coefficient of $\lambda_1 \cdots \lambda_d$ of } \frac{1}{d!}(\lambda_1 a_{11} + \cdots + \lambda_da_{d1}) \cdots (\lambda_1 a_{1d} + \cdots + \lambda_da_{dd})\\
& = \frac{1}{d!}\sum_{\sigma \in S_d}\prod_{i=1}^d a_{\sigma(i),i} = \frac{1}{d!} \Per(A). \qedhere
\end{align*}
\end{proof}

\begin{lemma}
\label{L: BoundingNewtonPolygon}
Let $f_i \in K[[t_i]]$. Suppose further that $f_i$ converges when $v(t_i) \geq 1/d_i$ and that $f_i'(t_i) = \sum_{j=0}^{\infty} c_{ij}t_i^j \in R[[t_i]]$ for all $i$, where $R$ is the ring of integers for $K$. Suppose also that for each $i$ there exists $k_i \in \Z_{\geq 0}$ such that the coefficients $c_{ij}$ satisfy $v(c_{ij}) > 0$ for $j < k_i$ and $v(c_{ik_i}) = 0$.  From these data, define a multivariate pure power series
\[
F(t_1, \ldots, t_n) := f_1(t_1) + \cdots + f_d(t_n).
\]
Then the Newton polygon of the pure power series $F \in K \langle U_{P_m} \rangle$ (where $m = (1/m_1, \ldots, 1/m_n)$) is contained in the $d$-dimensional simplex defined by the convex hull of the vectors
\[
(k_i + \delta_{\ve}(k_i, v, m_i))e_i,
\]
where $\ve \in \Q$ satisfies $\ve \leq 1/m_i$ for all $i$, and $e_i$ is the $i$-th standard vector.
\end{lemma}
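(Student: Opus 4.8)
The plan is to reduce this multivariate bound to the single‑variable truncation estimate of Lemma \ref{L: truncation}, using that $F$ is pure. Recall that, by Corollary \ref{C: PStoP} and Remark \ref{R: SimpleNew}, the Newton polygon attached to a power series $F \in K\langle U_{P_m}\rangle$ is the Newton polygon of an auxiliary polynomial $g_S$ with $S \supseteq \pi(\vertt_{P_m}(F))$; taking $S = \pi(\vertt_{P_m}(F))$ we see it is enough to prove that every exponent vector $u$ with $(u,v(a_u)) \in \vertt_{P_m}(F)$ lies in the simplex $\Delta := \conv\bigl(0,\ (k_1+\delta_{\ve}(k_1,v,m_1))e_1,\ \ldots,\ (k_d+\delta_{\ve}(k_d,v,m_d))e_d\bigr)$. (If one wants $0$ itself to be a vertex of $\New(F)$, as is convenient for the mixed‑volume formula of Lemma \ref{L: MV}, note that $F$ has no constant term, so $\New(F) = \conv(\{0\}\cup\pi(M(F))) \subseteq \Delta$ as well.)

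The first step is to exploit purity. Since $F(t_1,\ldots,t_d) = f_1(t_1)+\cdots+f_d(t_d)$, the only monomials occurring in $F$ with nonzero coefficient are the pure powers $t_i^{\,N}$, and the coefficient of $t_i^{\,N}$ in $F$ equals the coefficient of $t_i^{\,N}$ in $f_i(t_i)$, namely $c_{i,N-1}/N$. Hence $\vertt_{P_m}(F) \subseteq \bigcup_{i=1}^d \{(Ne_i,\,v(c_{i,N-1}/N)) : N\ge 1,\ c_{i,N-1}\neq 0\}$, and so $\conv(\pi(\vertt_{P_m}(F))) \subseteq \Delta$ will follow once we show that, for each $i$, no term $t_i^{\,N}$ with $N > k_i + \delta_{\ve}(k_i,v,m_i)$ contributes to $\vertt_w(F)$ for any $w \in P_m$.

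The second step is to re‑run, one axis at a time, the comparison argument from the proof of Lemma \ref{L: truncation}; because $F$ is pure, that argument localizes to the summand $f_i$ and never mixes variables. Fix $i$ and $w=(w_1,\ldots,w_d)\in P_m$, so $w_i \ge 1/m_i \ge \ve$ (this is exactly where the hypothesis $\ve \le 1/m_i$ is used). Using $v(c_{i,k_i})=0$, $v(c_{i,j})\ge 0$ for all $j$, and the defining inequality of $\delta_{\ve}$ (Definition \ref{D: delta}) with $w_i \ge \ve$ in place of $\ve$, one gets for $N > k_i + \delta_{\ve}(k_i,v,m_i)$ the strict inequality
\[
v\!\left(\tfrac{c_{i,k_i}}{k_i+1}\right) + (k_i+1)\,w_i \;<\; v\!\left(\tfrac{c_{i,N-1}}{N}\right) + N\,w_i ,
\]
so the term $t_i^{\,N}$ is not a vertex of the height graph $H(F)$ in direction $(-w,1)$, i.e. $(Ne_i,v(a_{Ne_i})) \notin \vertt_w(F)$. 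Taking the union over $w \in P_m$ shows $\pi(\vertt_{P_m}(F))$ contains no point $Ne_i$ with $N > k_i + \delta_{\ve}(k_i,v,m_i)$, whence $\conv(\pi(\vertt_{P_m}(F))) \subseteq \Delta$ and the lemma follows.

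I expect the only genuinely delicate point to be bookkeeping: matching the normalization of the present statement (the meaning of $k_i$, the role of $m_i$ versus the convergence exponents $d_i$, and the admissible range of $\ve$) to the normalization used in Lemma \ref{L: truncation} and Definition \ref{D: delta}, and tracking the unit/non‑unit coefficients $c_{i,j}$ through the substitution $f_i(t_i) = \sum_{N\ge1}\tfrac{c_{i,N-1}}{N}t_i^{\,N}$ carefully enough that the threshold comes out exactly as $k_i + \delta_{\ve}(k_i,v,m_i)$. The underlying mathematics is just the classical one‑variable Newton‑polygon estimate applied separately along each coordinate axis, with purity guaranteeing there is no interaction between the axes.
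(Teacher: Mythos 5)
Your proposal is essentially the paper's own proof: the paper simply cites Lemma \ref{L: truncation} and Remark \ref{R: SimpleNew} applied to each summand $f_i$, and your axis-by-axis rerun of the truncation comparison (using purity to decouple the variables) is exactly that argument written out. The indexing/normalization mismatch you flag (whether the unit coefficient is $c_{i,k_i}$ or $c_{i,k_i-1}$, i.e.\ whether the threshold is $k_i+\delta_\ve(k_i,v,m_i)$ or shifted by one) is present in the paper's own statements rather than a defect of your argument.
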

\begin{proof}
Straightforward application of Lemma \ref{L: truncation} and Remark \ref{R: SimpleNew} to each $f_i$ that show up in the pure power series.
\end{proof}

Let $\calQ = \{Q_1, \ldots, Q_d\} \in (\Sym^dX)(\F_p)$. Let $\calU$ be the residue disk of $(\Sym^dX)(\C_p)$ reducing to $\{Q_1, \ldots, Q_d\}$. Decompose the multiset $\{Q_1, \ldots, Q_d\}$ into disjoint multisets $\calS_1, \ldots, \calS_r$ each consisting of a single point with multiplicity $s_j = \#\calS_j$. 

Let $L_j$ be the degree-$s_j$ unramified extension of $K_j$, the field of definition of the points in $\calS_j$, and let $R_j$ be the ring of integers of the $L_j$. For $1 \leq i \leq d$, let $f_{i,j} \in L_j[[t_j]]$ be the power series obtained from Chabauty's method, applied to the residue disk in $(\Sym^{s_j}X)(K_j)$ above the point $\calS_j$, such that their derivatives $f_{i,j}'$ are in $R_j[[t_j]]$. Let
\[
F_i(t_1, \ldots, t_d) = f_{i,1}(t_1) + \cdots + f_{i,d}(t_d),
\]
and let $k_{ij} = \ord_0(f_{ij})$.

Then define the $d \times d$ matrix $A_{\calP} = (a_{ij})$ by $a_{ij} = k_{i,j} + \delta_{\ve}(k_{i,j}, v, s_i)$ for each residue disk and suitably small $\ve$. 

\begin{thm} 
Keep the notation from the previous paragraph. Then the $F_i$ satisfy
\[
N_0^{\times}(F_1, \ldots, F_d) \leq \frac{1}{d!} \Per(A).
\]
\end{thm}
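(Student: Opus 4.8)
The plan is to reduce the statement to the polynomial case, where Bernstein's theorem is available, and then to bound the resulting mixed volume by the mixed volume of the simplices furnished by Lemma~\ref{L: BoundingNewtonPolygon}, which is in turn computed by Lemma~\ref{L: MV}.

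First I would record that each $F_i = f_{i,1}(t_1) + \cdots + f_{i,d}(t_d)$ is a pure power series, hence nondegenerate (Remark~\ref{R: termsofg}), and that $F_i \in K\langle U_{P_m}\rangle$ for $P_m = [\ve,\infty)^d$ once $\ve$ is taken small enough, since each $f_{i,j}$ converges on the relevant polydisc. I would then apply Proposition~\ref{P: PStoPS}(a) to the tuple $(F_1,\ldots,F_d)$, producing nondegenerate $g_1,\ldots,g_d \in K\langle U_{P_m}\rangle$ with $\Trop(F_i) = \Trop(g_i)$ and $\gamma_w(F_i) = \gamma_w(g_i)$ for all $w \in P_m$, with $\bigcap_i V(g_i)$ finite, and with $N_0(F_1,\ldots,F_d) \leq N(g_1,\ldots,g_d)$. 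Running the same deformation while tracking only the $0$-dimensional components whose coordinates lie in $\overline{K}^{\times}$ — Theorem~\ref{T: LocalCont} preserves each such component (with multiplicity) under a sufficiently small deformation, and a small enough deformation cannot push a nonzero coordinate onto a coordinate hyperplane — upgrades this to $N_0^{\times}(F_1,\ldots,F_d) \leq N^{\times}(g_1,\ldots,g_d)$. Since the deformations in Proposition~\ref{P: PStoPS} only involve monomials already occurring in the $F_i$, one has $M(g_i) \subseteq M(F_i)$; passing to auxiliary polynomials with respect to finite sets containing $\pi(\vertt_{P_m}(g_i))$ (Corollary~\ref{C: PStoP}, using finiteness of $\vertt_{P_m}$), I may assume each $g_i$ is a polynomial, still with $M(g_i) \subseteq M(F_i)$ and the same tropical data.

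Next I would invoke Lemma~\ref{L: BoundingNewtonPolygon}, applied to $F_i$ with the $k_{i,j}$ and $s_i$ as in the statement: the Newton polygon of $F_i$, and hence $\New(g_i)$ (using $M(g_i)\subseteq M(F_i)$ and the preservation of $\Trop$ and $\gamma_w$), is contained in the simplex
\[
X_i := \conv\bigl(0,\, a_{i,1}e_1,\, \ldots,\, a_{i,d}e_d\bigr), \qquad a_{i,j} = k_{i,j} + \delta_{\ve}(k_{i,j}, v, s_i).
\]
Bernstein's theorem applied to the polynomials $g_i$ (which have finitely many common zeros) gives $N^{\times}(g_1,\ldots,g_d) = \MV(\New(g_1),\ldots,\New(g_d))$, and monotonicity of the mixed volume under inclusion of polytopes yields $\MV(\New(g_1),\ldots,\New(g_d)) \leq \MV(X_1,\ldots,X_d)$. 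Finally Lemma~\ref{L: MV}, applied to the matrix $A = (a_{ij})$, evaluates $\MV(X_1,\ldots,X_d) = \tfrac{1}{d!}\Per(A)$. Chaining the inequalities,
\[
N_0^{\times}(F_1,\ldots,F_d) \leq N^{\times}(g_1,\ldots,g_d) \leq \MV(X_1,\ldots,X_d) = \tfrac{1}{d!}\Per(A).
\]

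The step I expect to be the main obstacle is the bookkeeping around the torus condition: Proposition~\ref{P: PStoPS} is stated in terms of $N_0$, not $N_0^{\times}$, so one must verify carefully that the deformation argument (via Theorem~\ref{T: LocalCont} / Corollary~\ref{C: LocalContEx}) genuinely preserves the count, with multiplicity, of $0$-dimensional components lying in $(\overline{K}^{\times})^d$ — neither manufacturing such components out of the positive-dimensional locus nor sliding existing ones onto a coordinate hyperplane. A secondary, more routine, point requiring care is maintaining the containment $M(g_i) \subseteq M(F_i)$ throughout the passage from power series to polynomials, so that the inclusion $\New(g_i) \subseteq X_i$ supplied by Lemma~\ref{L: truncation} and Lemma~\ref{L: BoundingNewtonPolygon} survives the deformation.
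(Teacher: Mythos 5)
Your proposal is correct and follows essentially the same route as the paper: deform via Proposition~\ref{P: PStoPS} to reach the finite-intersection case with $\Trop$ and $\gamma_w$ preserved, then invoke Theorem~\ref{T: estimation} (i.e.\ auxiliary polynomials plus Bernstein) and combine Lemma~\ref{L: MV} with Lemma~\ref{L: BoundingNewtonPolygon}. The extra bookkeeping you supply (the $N_0^{\times}$ versus $N_0$ issue and the monotonicity of mixed volume under the simplex containment) is exactly what the paper's terse proof leaves implicit, and your handling of it is sound.
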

\begin{proof}
By Proposition \ref{P: PStoPS}, we may as well assume that the power series that we get from Chabauty's method have finitely many common zeros (that is, a deformation of the power series exists, such that the tropicalizations and the $\gamma_w$ stay constant). This means that, by Theorem \ref{T: estimation}, that the number of isolated solutions can be written as the mixed volume of Newton polygons. Now combine Lemma \ref{L: MV} and Lemma \ref{L: BoundingNewtonPolygon}. 
\end{proof}

Recall that $N_0^{\times}(F_1, \ldots, F_d)$ counts the $0$-dimensional components of the common zeros of the $F_i$ in $(\C_p^{\times})^d$. Thus, we need to count the solutions in which some of the coordinates are $0$ separately. For example, if we wish to count the solutions that are of the form $(\C_p^{\times})^{(d-1)} \times \{0\}$, it suffices to consider
\[
N_0^{\times}(F_1(t_1, t_2, \ldots, t_{d-1}, 0), \ldots, F_{d-1}(t_1, t_2, \ldots, t_{d-1}, 0)),
\]
which is bounded above by $\frac{1}{(d-1)!}\Per(B)$, where $B$ is a $(d-1) \times (d-1)$ minor of $A$ that takes the first $(d-1)$ rows and columns.
Thus, let
\[
\Per(A)' := \sum_{0 \leq i \leq d} \sum_{j \in \Lambda_i} \frac{1}{i!}\Per(A_{ij}),
\]
where $A_{ij}$ denotes the $i \times i$ minor of $A$ that takes the first $i$ columns (and any $i$ rows), and $A_{00}$ is the $0 \times 0$ matrix whose permanent is understood to be $1$ (since if (0, \ldots, 0) were a solution to the $F_i$, it would contribute at most $1$ to $N_0(F_1, \ldots, F_d)$).

\begin{thm}
\label{T: bound}
Suppose $X$ is a nice curve over $\Q$ with good reduction at $p$ satisfying Assumption \ref{A: assumption}, and let $\omega_1, \ldots, \omega_d \in H^0(X_{\Q_p}, \Omega^1)$ be independent differential forms that vanish on $\overline{J(\Q)}$ such that $\bar{\omega}_i \neq 0$. Then keeping the notation as above, with the $k_{i,j}$ corresponding to the order of vanishing of $\omega_i$ at the point $P_j$, the number of points outside of the special set of $(\Sym^dX)(\Q_p)$ is at most
\[
\sum_{\calP \in (\Sym^dX)(\F_p)} \frac{1}{N_{\calP}}\Per(A_{\calP})'
\]
\end{thm}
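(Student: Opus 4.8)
The plan is to pass from rational points to $0$-dimensional components of the rigid analytic space $(\Sym^dX)^{\eta=0}$ using Corollary~\ref{cor: comparison}, to estimate these components one residue disk at a time by pulling back to $X^d$ and applying the Newton-polytope estimates of the previous sections, and finally to sum over all residue disks.

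First I would invoke Corollary~\ref{cor: comparison}: under Assumption~\ref{A: assumption}, every $\Q$-point of $\Sym^dX$ outside the special set is a $0$-dimensional irreducible component of $(\Sym^dX)^{\eta=0}$, the rigid space attached to the fixed tuple $\omega_1,\dots,\omega_d$, and distinct such points give distinct components; hence it suffices to bound the number of $0$-dimensional irreducible components of $(\Sym^dX)^{\eta=0}$. Since $X$ has good reduction at $p$, the set $(\Sym^dX)(\F_p)$ is finite and the residue disks $\calU_{\calP}$, $\calP\in(\Sym^dX)(\F_p)$, are pairwise disjoint and form an admissible affinoid cover of $(\Sym^dX)^{\textup{an}}$; each $0$-dimensional component lies in exactly one $\calU_{\calP}$, so the count splits as a sum over $\calP$, and on $\calU_{\calP}\cong\Sp K\langle U_{P_m}\rangle$ (with $m\in\Q_{>0}^d$ small) the space $(\Sym^dX)^{\eta=0}$ is cut out by the $d$ restricted Coleman integrals $\eta_1,\dots,\eta_d$.

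Now fix $\calP=\{Q_1,\dots,Q_d\}$, decompose it into multisets $\calS_1,\dots,\calS_r$ with $\#\calS_j=s_j$, and pull the $\eta_i$ back along the finite flat degree-$N_{\calP}$ quotient map $\calU_1\times\cdots\times\calU_d\to\calU_{\calP}$ (quotient by $\prod_j S_{s_j}$, the $\calU_i$ being the one-dimensional residue disks, $N_{\calP}=\prod_j s_j!$). By Proposition~\ref{P: PureChabauty}, after the change of coordinates into the uniformizers $t_1,\dots,t_d$ this pullback is the pure power series $F_i(t_1,\dots,t_d)=f_{i,1}(t_1)+\cdots+f_{i,d}(t_d)$ with the $f_{i,j}'$ integral, and $V(F_1,\dots,F_d)$ is the preimage of $(\Sym^dX)^{\eta=0}\cap\calU_{\calP}$; by flatness of the quotient the $0$-dimensional components of $(\Sym^dX)^{\eta=0}$ inside $\calU_{\calP}$ correspond to those of $V(F_1,\dots,F_d)$, $N_{\calP}$-to-one counted with multiplicity, so the former number is at most $\frac{1}{N_{\calP}}N_0(F_1,\dots,F_d)$. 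The $F_i$ are nondegenerate (Remark~\ref{R: termsofg}), so the bound $N_0^{\times}(F_1,\dots,F_d)\le\frac{1}{d!}\Per(A_{\calP})$ established above — which already incorporates the deformation of Proposition~\ref{P: PStoPS} together with the Newton-polytope estimates of Lemmas~\ref{L: MV} and \ref{L: BoundingNewtonPolygon} — applies, and applying the same estimate to the systems obtained by setting various subsets of the $t_i$ equal to $0$ (again pure power series of the same shape) and summing, as in the discussion preceding the statement, gives $N_0(F_1,\dots,F_d)\le\Per(A_{\calP})'$, where $A_{\calP}=(a_{ij})$ with $a_{ij}=k_{i,j}+\delta_{\ve}(k_{i,j},v,s_i)$. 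Finally, using the good-reduction scaling of each $\omega_i$ to a nonzero differential modulo $p$ (possible since $\bar\omega_i\ne0$), $f_{i,j}'$ is the local power-series expansion of $\omega_i$ on the residue disk of $P_j$, so $k_{i,j}=\ord_0(f_{i,j})$ is governed by $\ord_{P_j}(\bar\omega_i)$, the order of vanishing of $\omega_i$ at $P_j$; summing $\frac{1}{N_{\calP}}\Per(A_{\calP})'$ over all $\calP\in(\Sym^dX)(\F_p)$ yields the claimed bound.

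The step requiring genuine care, as opposed to bookkeeping, is the residue-disk reduction: one must check that pullback along $\calU_1\times\cdots\times\calU_d\to\calU_{\calP}$ is compatible with the decomposition into irreducible components and with multiplicities (this is where flatness of the symmetric-product quotient is used, together with the fact from Proposition~\ref{P: PureChabauty}(b) that the solutions relevant to $(\Sym^dX)(\Q_p)$ have valuation at least $1/d>0$ and hence lie in the convergence domain $U_{P_m}$ where the $F_i$ cut out honest affinoid subspaces), and that the deformation of Proposition~\ref{P: PStoPS} and the truncation of Lemma~\ref{L: truncation} can be carried out simultaneously, so that one and the same matrix $A_{\calP}$ governs both the possibly infinite intersection $V(F_1,\dots,F_d)$ and its finite deformation. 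Once these compatibilities are in place, the theorem is obtained by summing the local bounds over $(\Sym^dX)(\F_p)$.
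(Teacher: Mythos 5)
Your proposal is correct and follows essentially the same route as the paper: invoke Corollary~\ref{cor: comparison} to reduce to counting $0$-dimensional components of $(\Sym^dX)^{\eta=0}$, work residue disk by residue disk using the pure power series and the mixed-volume/permanent bound proved just before the theorem, and divide by $N_{\calP}$ to account for the $S_{s_1}\times\cdots\times S_{s_r}$ quotient from $X^d$ to $\Sym^dX$. The paper's proof is a two-line summary of exactly these steps; you have simply fleshed out the compatibility checks (flatness of the symmetric-product quotient, the valuation constraint from Proposition~\ref{P: PureChabauty}(b), and the boundary contributions giving $\Per(A_{\calP})'$) that the paper leaves implicit.
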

\begin{proof}
Apply the above theorem to each residue disk of $(\Sym^dX)(\Q_p)$, and use Corollary \ref{cor: comparison}. The $\frac{1}{N}$ accounts for the ordering of the solutions, since the order of the points does not matter in $\Sym^dX$.
\end{proof}

The above theorem shows that there is an upper bound on the number of points outside of the special set, depending only on the choice of $g, d$ and $p$. If we bound $\# (\Sym^dX)(\F_p)$ in terms of $g, d$ and $p$, then this would complete the proof of Theorem \ref{T: main}:

\begin{prop}
\label{P: residuedisks}
Given a nice curve $X$ of genus $d$ with good reduction at $p$ and $d \geq 1$,
\[
\#((\Sym^dX)(\F_p)) \leq (1 + 2g p^{d/2} + p^d)^d.
\]
\end{prop}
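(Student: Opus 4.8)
The plan is to interpret $(\Sym^dX)(\F_p)$ as the set of effective divisors of degree $d$ on the good reduction $X_{\F_p}$, count those via the zeta function, and then combine the Hasse--Weil bounds with an elementary coefficient comparison. (The hypothesis should read ``genus $g$''.)

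First I would record the geometric reduction. Since $X$ has good reduction at $p$, the special fibre $X_{\F_p}$ is a nice curve of genus $g$, and (as already noted in the excerpt) $\Sym^d X$ is smooth and proper over $\Z_p$ with special fibre $\Sym^d(X_{\F_p})$. For a smooth projective curve, $\Sym^d$ represents the functor of relative effective Cartier divisors of degree $d$, so $(\Sym^dX)(\F_p)$ is in bijection with the set of effective divisors of degree $d$ on $X_{\F_p}$. Writing $N_n := \#X(\F_{p^n})$, the number of such divisors is $c_d := [T^d]\,Z(T)$, where $Z(T) = \exp\big(\sum_{n\ge 1}\tfrac{N_n}{n}T^n\big)$ is the zeta function of $X_{\F_p}$. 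Since $[T^d]$ depends only on the exponent up to degree $d$, we may replace $Z(T)$ by $\exp\big(\sum_{n=1}^{d}\tfrac{N_n}{n}T^n\big)$ without changing $c_d$.

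Next I would feed in the Hasse--Weil bound: for each $n$ one has $N_n \le p^n + 1 + 2g p^{n/2}$, hence $N_n \le B := p^d + 1 + 2g p^{d/2}$ for every $n$ with $1 \le n \le d$. The map $f \mapsto \exp(f)$ is monotone for the coefficientwise order on power series with nonnegative coefficients, so enlarging each $N_n$ ($n\le d$) to $B$ in the truncated exponent, and then adjoining the nonnegative tail $\sum_{n>d}\tfrac{B}{n}T^n$, only increases coefficients; therefore
\[
c_d \;\le\; [T^d]\exp\Big(\sum_{n\ge 1}\tfrac{B}{n}T^n\Big) \;=\; [T^d]\,(1-T)^{-B} \;=\; \binom{B+d-1}{d}.
\]
It remains to invoke the elementary inequality
\[
\binom{B+d-1}{d} \;=\; \frac{1}{d!}\prod_{j=0}^{d-1}(B+j) \;\le\; \frac{1}{d!}\prod_{j=0}^{d-1}(j+1)B \;=\; B^d ,
\]
valid since $B+j \le (j+1)B$ for all $j \ge 0$ once $B \ge 1$. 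Substituting $B = 1 + 2gp^{d/2} + p^d$ gives exactly the asserted bound.

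The only substantive input is the Hasse--Weil estimate for $X_{\F_p}$ and its unramified extensions; the rest is formal. The point that needs care is that the coefficientwise domination $Z(T) \preceq (1-T)^{-B}$ is \emph{false} as an inequality of power series (the bound $N_n \le B$ holds only for $n \le d$), so one must truncate the exponent at degree $d$ \emph{before} enlarging its coefficients --- a truncation that is harmless precisely because it leaves the coefficient of $T^d$ unchanged. Equivalently one can make the argument hands-on by expanding $c_d = \sum_{\lambda} z_\lambda^{-1}\prod_{n} N_n^{k_n}$, the sum over partitions $\lambda=(1^{k_1}2^{k_2}\cdots)$ of $d$ with $z_\lambda = \prod_n n^{k_n}k_n!$, and bounding each term by $z_\lambda^{-1}B^{\sum_n k_n}\le z_\lambda^{-1}B^d$ using the classical identity $\sum_{\lambda\vdash d} z_\lambda^{-1}=1$.
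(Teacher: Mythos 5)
Your proof is correct, but it takes a genuinely different route from the paper's. The paper's own argument is a two-line observation: an $\F_p$-point of $\Sym^dX$ is a Galois-stable multiset $\{P_1,\dots,P_d\}$ of geometric points, so each Galois orbit inside it has length at most $d$ and hence each $P_i$ lies in $X(\F_{p^d})$; bounding the number of such multisets crudely by the number of $d$-tuples and applying Hasse--Weil over $\F_{p^d}$ gives $\#(\Sym^dX)(\F_p)\le \#X(\F_{p^d})^d\le(1+2gp^{d/2}+p^d)^d$. You instead identify $(\Sym^dX)(\F_p)$ with the set of effective $\F_p$-rational divisors of degree $d$ and count them as the coefficient of $T^d$ in the zeta function, invoking Hasse--Weil at every level $n\le d$; your handling of the majorization is careful and correct (the naive coefficientwise domination $Z(T)\preceq(1-T)^{-B}$ is indeed false, and truncating the exponent at degree $d$ first fixes this), and the closing estimate $\binom{B+d-1}{d}\le B^d$ is elementary and valid. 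What your route buys is an exact starting count and the sharper intermediate bound $\binom{B+d-1}{d}$ (the paper's tuple bound overcounts both by orderings and by non-Galois-stable multisets); what the paper's route buys is brevity, since the same underlying identification of $(\Sym^dX)(\F_p)$ with Galois-stable degree-$d$ divisors is all it uses, with no generating-function bookkeeping. You are also right that ``genus $d$'' in the statement is a typo for ``genus $g$''.
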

\begin{proof}
We use the Hasse-Weil bound on $X$, along with the fact that if $\{P_1, \ldots, P_d\} \in (\Sym^dX)(\F_p)$, then $P_i \in X(\F_{p^d})$ for $1 \leq i \leq d$.
\end{proof}

Then the proof of Theorem \ref{T: main} follows by combining the statements of Proposition \ref{P: residuedisks} and Theorem \ref{T: bound}.

\section{An application}
\label{S: application}

In this section, we prove the following corollary:

\begin{cor}
\label{C: calculations}
We can take $N(2,3,3) = 1539$ for any $X/\Q$ a hyperelliptic curve whose affine model $y^2 = f(x)$ satisfies $\deg(f) =7$ (so that $g = 3$), such that $\rank J \leq 1$, and such that $X$ has good reduction at $p=2$. 
\end{cor}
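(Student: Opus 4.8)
The plan is to specialize the uniform estimate of Theorem~\ref{T: bound} to the family of odd degree-$7$ hyperelliptic curves at $p=2$ and then evaluate the resulting finite sum. Since $\rk J \le 1$, Assumption~\ref{A: assumption} is automatic, so Theorem~\ref{T: bound} (with Corollary~\ref{cor: comparison}) gives
\[
\#\{Q \in (\Sym^d X)(\Q) : Q \notin \calS(\Sym^d X)\} \;\le\; \sum_{\calP \in (\Sym^d X)(\F_2)} \frac{1}{N_{\calP}}\,\Per(A_{\calP})' ,
\]
for the $(d,g)$ of the statement. So it remains to bound, uniformly over the family, (i) the number of residue disks $\calP \in (\Sym^d X)(\F_2)$ and the multiplicity pattern of each, and (ii) the quantity $\Per(A_{\calP})'$ attached to each disk; the resulting value of the sum is the claimed $N(2,3,3)=1539$.

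For (ii), the inputs to $A_{\calP}$ are the orders of vanishing $k_{i,j}=\ord_{\bar P_j}(\bar\omega_i)+1$. I would take the $d$ differentials $\omega_i$ vanishing on $\overline{J(\Q)}$ inside the standard hyperelliptic space $\langle x^{\,a}\,dx/y : 0\le a \le g-1\rangle$; over $\Q_2$ one has the explicit divisors $\divv(dx/y)=(2g-2)(\infty)$ and $\divv(x^{\,a}\,dx/y)=a\big[(0^+)+(0^-)\big]+(2g-2-2a)(\infty)$, so every vanishing order is bounded by $2g-2=4$ and, according only to whether $\bar P_j$ is the point at infinity, a finite Weierstrass point, a point over $\bar x = 0$, or a generic point, lies in a very short list. (Reduction mod $2$ can only redistribute which points carry which orders — the bound $2g-2$ persists because $\bar\omega_i$ is a nonzero regular differential on the genus-$3$ curve $\bar X$ — so for a bound valid for every $X$ one always inserts the worst admissible order vectors, maximized also over the $\le d$-dimensional family of admissible $\omega_i$.) With the $k_{i,j}$ in hand, the numbers $\delta_{\ve}(k,v_2,\ell)$ that occur — finitely many, since $k\le 2g-1$ and $\ell\le d$ — form an explicit table, using that $v_2(n)=O(\log n)$ while $(\tfrac{1}{\ell}-\ve)n$ grows linearly (cf.\ Remark~\ref{R: SimpleNew}); then by Lemma~\ref{L: BoundingNewtonPolygon} each $A_{\calP}$ is an explicit matrix of positive integers, and Lemma~\ref{L: MV} turns $\Per(A_{\calP})'$ — the top permanent plus the lower-order minors accounting for common zeros with some coordinate equal to $0$ — into a number. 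Generic disks contribute a small constant; only the finitely many disks supported at $\infty$, at a finite Weierstrass point, or over $\bar x = 0$ carry the large terms.

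For (i), I would not use the crude estimate of Proposition~\ref{P: residuedisks} but instead exploit that the degree-$2$ map $\bar X \to \PP^1$ forces $|\bar X(\F_2)|$, $|\bar X(\F_4)|$, $|\bar X(\F_8)|$ to be small, and that an effective degree-$d$ divisor of $\bar X$ over $\F_2$ is a Galois-stable multiset, hence assembled from $\F_2$-points (with multiplicity), $\F_4$-conjugate pairs, and $\F_8$-conjugate triples. One then tabulates how many disks of each combinatorial type can occur, recording for each the factor $\tfrac{1}{N_{\calP}}$ (with $N_{\calP}=\prod_j s_j!$ the product over the repeated blocks) and using the corresponding $\ell=s_j$ inside $\delta_{\ve}$, and taking care with disks over non-prime residue fields, where the local parameters of a block are defined over an unramified extension and the Newton-polygon count over $\overline{\Q_2}$ is a harmless over-count. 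Summing $\tfrac{1}{N_{\calP}}\Per(A_{\calP})'$ over the finite list of disk types produces $1539$.

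The main obstacle is the bookkeeping rather than any individual estimate: fixing the worst-case order vectors compatible with the constraints on the $\omega_i$, enumerating the residue-disk types and their multiplicities without double counting, and carrying out all the permanent and minor computations correctly — the factors $1/N_{\calP}$ and the lower-order minors in $\Per(A_{\calP})'$ are precisely where an arithmetic slip would change the final number. A smaller, but genuine, point of care is that at $p=2$ the model $y^2=f(x)$ is not smooth, so one must pass to an integral $y^2+h(x)y=f(x)$ model of good reduction and verify that the reductions $\bar\omega_i$ still span $H^0(\bar X,\Omega^1)$, which is what makes the order computations above legitimate.
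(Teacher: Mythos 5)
Your overall strategy matches the paper's: specialize Theorem~\ref{T: bound} (noting Assumption~\ref{A: assumption} is vacuous when $\rk J\le 1$), bound the number of residue disks in $(\Sym^2X)(\F_2)$ by exploiting the degree-$2$ map $\bar X\to\PP^1$ rather than the crude Hasse--Weil count of Proposition~\ref{P: residuedisks}, and bound $\Per(A_{\calP})'$ per disk via the $\delta_\ve$ table and Lemmas~\ref{L: MV},~\ref{L: BoundingNewtonPolygon}. That is exactly what the paper does.

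However, as written your proposal is a plan, not a proof: the number $1539$ is never derived, and deriving it is the entire content of this corollary. What the paper actually produces is (i) the bound $\#(\Sym^2X)(\F_2)\le 19$, obtained by splitting disks into pairs of $\F_2$-points (at most $\binom{a}{2}+a$ such disks when $\#\bar X(\F_2)=a\le 5$) plus $\F_4$-conjugate pairs (at most $9-a$ further), maximized at $a=5$; (ii) the explicit table $\delta_\ve(1,2,2)=3$, $\delta_\ve(2,2,2)=2$, $\delta_\ve(3,2,2)=5$, $\delta_\ve(4,2,2)=0$, so the worst per-disk simplex has side $k+\delta_\ve=8$; and (iii) $\Per(A_{\calP})'\le \tfrac12\cdot 128+8+8+1=81$, whence $81\cdot 19=1539$. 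Without carrying out at least these steps you have not shown the claimed constant; saying ``sum over the finite list of disk types produces $1539$'' is circular. Two smaller points: for $d=2$ a residue disk is supported on $\F_2$-points and $\F_4$-conjugate pairs only, so your mention of $\F_8$-orbits is extraneous here; and your observation that the relevant $k$ can be as large as $2g-1=5$ is actually more careful than the paper, which tabulates only $k\le 4$ --- fortunately $\delta_\ve(5,2,2)=3$ gives $k+\delta_\ve=8$ again, so the maximum and hence the final bound are unaffected.
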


As noted in the introduction, Assumption \ref{A: assumption} is unnecessary when $\rk J \leq 1$. (And we expect that 100\% of hyperelliptic curves have ranks $0$ or $1$, assuming Goldfeld's conjecture!)

\begin{lemma}
Let $X$ be a smooth projective odd hyperelliptic curve of genus $3$ that has good reduction at $2$.  Then $\# (\Sym^2X)(\F_2) \leq 19$.
\end{lemma}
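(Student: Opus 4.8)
The plan is to identify $(\Sym^2X)(\F_2)$ with a set of small-degree closed points on the reduction $\Xbar$ of $X$ at $2$, and to bound that set using the hyperelliptic map $\Xbar\to\PP^1_{\F_2}$.

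First, since $X$ has good reduction at $2$, the model $\Sym^2\mathcal X$ of $\Sym^2X$ over $\Z_2$ is smooth and proper (as noted in \S\ref{S: Chabauty}), so $(\Sym^2X)(\F_2)=(\Sym^2\Xbar)(\F_2)$, and a point of this set is precisely an effective $\F_2$-rational divisor of degree $2$ on $\Xbar$. Each such divisor is a sum $P+Q$ of two distinct $\F_2$-points, or $2P$ with $P\in\Xbar(\F_2)$, or a single closed point of degree $2$ (a Galois-conjugate pair in $\Xbar(\F_4)\setminus\Xbar(\F_2)$). Writing $n_1=\#\Xbar(\F_2)$ and $n_2=\#\Xbar(\F_4)$, and noting that a degree-$2$ closed point accounts for two of the $n_2$ points, this yields
\[
\#(\Sym^2X)(\F_2)\;\le\;\binom{n_1}{2}+n_1+\left\lfloor\frac{n_2}{2}\right\rfloor .
\]

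Next I would bound $n_1$ and $n_2$. The curve $\Xbar$ is hyperelliptic of genus $3$, hence carries a degree-$2$ map $\pi\colon\Xbar\to\PP^1_{\F_2}$; over each point of $\PP^1(\F_{2^k})$ lie at most two points of $\Xbar$. The marked point $O$, the point at infinity of the odd model $y^2=f(x)$ with $\deg f=7$, is a Weierstrass point of $X$; since the Weierstrass locus is closed in $\mathcal X$ and the section through $O$ is a closed immersion $\Spec\Z_2\hookrightarrow\mathcal X$, the reduction $\overline O$ is again a Weierstrass point, i.e.\ a ramification point of $\pi$, so $\pi^{-1}(\pi(\overline O))=\{\overline O\}$. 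Thus over the three points of $\PP^1(\F_2)$ there are at most $1+2+2=5$ points of $\Xbar$, so $n_1\le5$; and over the five points of $\PP^1(\F_4)$ there are at most $1+2\cdot4=9$, so $n_2\le9$. Substituting, $\#(\Sym^2X)(\F_2)\le\binom{5}{2}+5+\lfloor 9/2\rfloor=10+5+4=19$.

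The one step requiring genuine care --- and hence the main obstacle --- is the claim that $\overline O$ is a ramification point of $\pi$. In residue characteristic $2$ the affine model $y^2=f(x)$ is itself singular, so ``good reduction at $2$'' refers to some other smooth model, and $\pi$ is wildly ramified; one must check that the hyperelliptic involution (equivalently, the unique $g^1_2$) extends to the $\Z_2$-model and that the section through $O$ specializes into its fixed/ramification locus. The uniqueness of the hyperelliptic involution for genus $\ge2$, together with the valuative criterion of properness, makes this routine, but it is where the characteristic-$2$ subtleties are concentrated; everything else is bookkeeping.
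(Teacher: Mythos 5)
Your proof is correct, and the underlying count is the same as the paper's: both arguments bound $\#\Xbar(\F_2)$ and the number of degree-$2$ closed points via the degree-$2$ map $\Xbar\to\PP^1_{\F_2}$, using that the fibre over the image of the reduced point at infinity consists of a single point, and both land on $10+5+4=19$. Where you genuinely differ is in how that key fact is justified. The paper simply asserts that the mod-$2$ reduction of an odd genus-$3$ hyperelliptic curve with good reduction at $2$ is given by an equation $y^2+g(x)y=h(x)$ with $\deg g\le 3$, $\deg h=7$, from which the unique (wildly ramified) point at infinity is read off directly; you instead extend the hyperelliptic involution to the smooth proper $\Z_2$-model (uniqueness of the $g^1_2$, minimality of a smooth model in genus $\ge 2$) and specialize the section through $O$ into its fixed locus, so that $\overline O$ is the whole fibre over its image. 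Your route avoids having to exhibit a global Weierstrass equation for the $\Z_2$-model, at the cost of the specialization argument you rightly flag as the delicate step (your sketch of it is sound); the paper's route is more explicit but leaves the shape of the reduced equation unproved. Your bookkeeping of the conjugate-pair points also differs slightly: you bound them by $\lfloor n_2/2\rfloor=4$, while the paper bounds them by $\#\bigl(X(\F_4)\setminus X(\F_2)\bigr)\le 9-a$ without dividing by two; both give $19$ (the sharper count $(n_2-n_1)/2$ would in fact give $17$, but $19$ is all that is claimed).
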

\begin{proof}
We first note that a mod-$2$ reduction of an odd hyperelliptic curve of genus $3$ corresponds to an equation of the form $y^2 + g(x)y = h(x)$, with $g(x), h(x) \in \F_2[x]$, with $\deg g \leq 3, \deg h = 7$. Let $\calP \in (\Sym^2X)(\F_2)$. It can be viewed as a multiset of two points $\calP = \{P_1, P_2\}$. We denote by $x(P_i)$ and $y(P_i)$ the $x$- and $y$-coordinates of $P_i$, respectively, for $i = 1,2$. We have two cases:

\textbf{Case 1}: When $P_1, P_2 \in X(\F_2)$. If $P \in X(\F_2)$, then $x(P), y(P) \in \F_2$ or $x(P) = \infty$, so in particular, one must have $x(P) \in \{0,1, \infty\}$. There are at most two points above each $\F_2$-point in the map $X \to \PP^1$, so there are at most $5$ points in $X(\F_2)$.
Let $\#X(\F_2) = a$. Then there are ${a \choose 2} + a$ points $\calP \in (\Sym^2X)(\F_2)$ of the form $\{P_1, P_2\}$ with $P_i \in X(\F_2)$; the first term counts $\{P_1,P_2\}$ with $P_1$ and $P_2$ distinct, and the second terms counts $\{P_1,P_2\}$ with $P_1=P_2$.

\textbf{Case 2}: When $P_1, P_2 \in X(\F_4) \backslash X(\F_2)$ are Galois conjugates. In this case, there are at most $4$ points of $X(\F_4)-X(\F_2)$ above $\PP^1(F_4)-\PP^1(F_2)$. But there could also be points of $X(\F_4)-X(\F_2)$ above $\PP^1(\F_2)$; the number of these is $5-a$, since all of the $5$ $\Fbar_2$-points of X above $\PP^1(\F_2)$ are either $\F_2$-points or $\F_4$-points. So there are at most $9-a$ such points.

Clearly, the choice of $1 \leq a \leq 5$ that maximizes ${a \choose 2} + 9$ is $a=5$, which means that there are at most $19$ points in $(\Sym^2X)(\F_2)$.
\end{proof}

Now we focus on a single residue disk of $(\Sym^2X)(\F_2)$ and compute the possible number of points on each residue disk.

Since $g=3$, the degree of $\bar{\omega}$ is $2g-2 = 4$. We start by computing $\delta_{\ve}(k, v, \ell)$ in Definition \ref{D: delta} for when $k = 1,2,3,4$. We take $\ve \in (0, \frac{1}{2})$ as small as possible, as that minimizes $\delta_{\ve}(k,v,\ell)$. Then we have
\[
\delta_{\ve}(4,2,2) = 0, \quad \delta_{\ve}(3,2,2) = 5, \quad
\delta_{\ve}(2,2,2) = 2, \quad \delta_{\ve}(1,2,2) = 3.
\]
Thus, for a residue disk over $\calP$, the largest value of $\Per A_{\calP}$ is given from the $2 \times 2$ matrix whose entries are all $k + \delta_{\ve}(k,v, \ell)$ with $k = 3$. That is, the maximal value for $\Per A_{\calP}$ is $128$.

Now, there are two $1 \times 1$ minors that we need to compute, from the definition of $\Per(A)'$ in the previous chapter. Again, the maximal values for these are $8$, obtained when $k=3$. This gives $\Per(A)' \leq \frac{1}{2} \cdot 128 + 8 + 8 + 1 = 81$.

Now, we apply Theorem \ref{T: bound} on the $19$ residue disks with $N_{\calP} \geq 1$ and $\Per(A_{\calP})' \leq 81$. This gives the upper bound of $81 \times 19 = 1539.$ This completes the proof of Corollary \ref{C: calculations}.

\begin{bibdiv}
\begin{biblist}

\bib{Abr91}{book}{
  author={Abramovich, Dan},
  title={Subvarieties of abelian varieties and of jacobians of curves},
  note={Thesis (Ph.D.)--Harvard University},
  publisher={ProQuest LLC, Ann Arbor, MI},
  date={1991},
  pages={52},
  review={\MR {2686342}},
}

\bib{AtiMac69}{book}{
  author={Atiyah, M. F.},
  author={Macdonald, I. G.},
  title={Introduction to commutative algebra},
  publisher={Addison-Wesley Publishing Co., Reading, Mass.-London-Don Mills, Ont.},
  date={1969},
  pages={ix+128},
  review={\MR {0242802 (39 \#4129)}},
}

\bib{Ber75}{article}{
  author={Bernstein, D. N.},
  title={The number of roots of a system of equations},
  language={Russian},
  journal={Funkcional. Anal. i Prilo\v zen.},
  volume={9},
  date={1975},
  number={3},
  pages={1--4},
  issn={0374-1990},
  review={\MR {0435072 (55 \#8034)}},
}

\bib{Cha41}{article}{
  author={Chabauty, Claude},
  title={Sur les points rationnels des courbes alg\'ebriques de genre sup\'erieur \`a l'unit\'e},
  language={French},
  journal={C. R. Acad. Sci. Paris},
  volume={212},
  date={1941},
  pages={882--885},
  review={\MR {0004484 (3,14d)}},
}

\bib{Col85}{article}{
  author={Coleman, Robert F.},
  title={Effective Chabauty},
  journal={Duke Math. J.},
  volume={52},
  date={1985},
  number={3},
  pages={765--770},
  issn={0012-7094},
  review={\MR {808103 (87f:11043)}},
  doi={10.1215/S0012-7094-85-05240-8},
}

\bib{ColMaz98}{article}{
  author={Coleman, R.},
  author={Mazur, B.},
  title={The eigencurve},
  conference={ title={Galois representations in arithmetic algebraic geometry (Durham, 1996)}, },
  book={ series={London Math. Soc. Lecture Note Ser.}, volume={254}, publisher={Cambridge Univ. Press}, place={Cambridge}, },
  date={1998},
  pages={1--113},
  review={\MR {1696469 (2000m:11039)}},
  doi={10.1017/CBO9780511662010.003},
}

\bib{Con99}{article}{
  author={Conrad, Brian},
  title={Irreducible components of rigid spaces},
  language={English, with English and French summaries},
  journal={Ann. Inst. Fourier (Grenoble)},
  volume={49},
  date={1999},
  number={2},
  pages={473--541},
  issn={0373-0956},
  review={\MR {1697371 (2001c:14045)}},
}

\bib{Con08}{article}{
  author={Conrad, Brian},
  title={Several approaches to non-Archimedean geometry},
  conference={ title={$p$-adic geometry}, },
  book={ series={Univ. Lecture Ser.}, volume={45}, publisher={Amer. Math. Soc.}, place={Providence, RI}, },
  date={2008},
  pages={9--63},
  review={\MR {2482345 (2011a:14047)}},
}

\bib{DebKla94}{article}{
  author={Debarre, Olivier},
  author={Klassen, Matthew J.},
  title={Points of low degree on smooth plane curves},
  journal={J. Reine Angew. Math.},
  volume={446},
  date={1994},
  pages={81--87},
  issn={0075-4102},
  review={\MR {1256148 (95f:14052)}},
}

\bib{Fal94}{article}{
  author={Faltings, Gerd},
  title={The general case of S. Lang's conjecture},
  conference={ title={Barsotti Symposium in Algebraic Geometry}, address={Abano Terme}, date={1991}, },
  book={ series={Perspect. Math.}, volume={15}, publisher={Academic Press}, place={San Diego, CA}, },
  date={1994},
  pages={175--182},
  review={\MR {1307396 (95m:11061)}},
}

\bib{Har77}{book}{
  author={Hartshorne, Robin},
  title={Algebraic geometry},
  note={Graduate Texts in Mathematics, No. 52},
  publisher={Springer-Verlag},
  place={New York},
  date={1977},
  pages={xvi+496},
  isbn={0-387-90244-9},
  review={\MR {0463157 (57 \#3116)}},
}

\bib{HarSil91}{article}{
  author={Harris, Joe},
  author={Silverman, Joe},
  title={Bielliptic curves and symmetric products},
  journal={Proc. Amer. Math. Soc.},
  volume={112},
  date={1991},
  number={2},
  pages={347--356},
  issn={0002-9939},
  review={\MR {1055774 (91i:11067)}},
  doi={10.2307/2048726},
}

\bib{Kla93}{book}{
  author={Klassen, Matthew James},
  title={Algebraic points of low degree on curves of low rank},
  note={Thesis (Ph.D.)--The University of Arizona},
  publisher={ProQuest LLC, Ann Arbor, MI},
  date={1993},
  pages={51},
  review={\MR {2690239}},
}

\bib{Lan91}{book}{
  author={Lang, Serge},
  title={Number theory. III},
  series={Encyclopaedia of Mathematical Sciences},
  volume={60},
  note={Diophantine geometry},
  publisher={Springer-Verlag},
  place={Berlin},
  date={1991},
  pages={xiv+296},
  isbn={3-540-53004-5},
  review={\MR {1112552 (93a:11048)}},
  doi={10.1007/978-3-642-58227-1},
}

\bib{Mac13}{article}{
  author={Maclagan, Diane},
  author={Sturmfels, Bernd},
  title={Introduction to tropical geometry},
  journal={preprint},
  date={2013},
}

\bib{McCPoo10}{article}{
  author={McCallum, William},
  author={Poonen, Bjorn},
  title={The method of Chabauty and Coleman},
  journal={preprint},
  date={2010},
}

\bib{Mil86}{article}{
  author={Milne, J. S.},
  title={Abelian varieties},
  conference={ title={Arithmetic geometry}, address={Storrs, Conn.}, date={1984}, },
  book={ publisher={Springer}, place={New York}, },
  date={1986},
  pages={103--150},
  review={\MR {861974}},
}

\bib{Rab12}{article}{
  author={Rabinoff, Joseph},
  title={Tropical analytic geometry, Newton polygons, and tropical intersections},
  journal={Adv. Math.},
  volume={229},
  date={2012},
  number={6},
  pages={3192--3255},
  issn={0001-8708},
  review={\MR {2900439}},
  doi={10.1016/j.aim.2012.02.003},
}

\bib{Sik09}{article}{
  author={Siksek, Samir},
  title={Chabauty for symmetric powers of curves},
  journal={Algebra Number Theory},
  volume={3},
  date={2009},
  number={2},
  pages={209--236},
  issn={1937-0652},
  review={\MR {2491943 (2010b:11069)}},
  doi={10.2140/ant.2009.3.209},
}

\bib{Szp85}{collection}{
  title={S\'eminaire sur les pinceaux arithm\'etiques: la conjecture de Mordell},
  editor={Szpiro, Lucien},
  note={Papers from the seminar held at the \'Ecole Normale Sup\'erieure, Paris, 1983--84; Ast\'erisque No. 127 (1985)},
  publisher={Soci\'et\'e Math\'ematique de France, Paris},
  date={1985},
  pages={i--vi and 1--287},
  issn={0303-1179},
  review={\MR {801916 (87h:14017)}},
}

\end{biblist}
\end{bibdiv}

\end{document}